\def\Riem{\mathop{\rm Rm}}
\newtheorem{theorem}{Theorem}[section]
\newtheorem{proposition}[theorem]{Proposition}
\newtheorem{lemma}[theorem]{Lemma}
\newtheorem{corollary}[theorem]{Corollary}
\newtheorem{defn}{Definition}
\title{Asymptotic geometry of toric K\"ahler instantons}
\author{Brian Weber}
\begin{document}
	
	\maketitle
	
	\begin{abstract}
		The symplectic reduction of a complete toric K\"ahler manifold need not be closed or be a polygon.
		In the scalar-flat K\"ahler case we establish natural geometric criteria for these reductions to be closed, and classify the asymptotic geometries that may occur.
	\end{abstract}

	\section{Introduction}
	
	A K\"ahler manifold $(M^4,J,g)$ with a 2-torus action that preserves the symplectic structure and metric is said to be a toric K\"ahler 4-manifold.
	If the manifold is compact, the quotient is a compact Delzant polygon (see \cite{Delzant} \cite{SymplMS}), but if the manifold is only complete, its quotient need not be closed and need not be a polygon; see the example in Section \ref{SubsubsecNonpolygon}.
	We give necessary and sufficient conditions for the quotient to be closed, and when $M^4$ is scalar-flat we classify its possible metrics.
	
	Throughout, $M^4=(M^4,J,g,\mathcal{X}^1,\mathcal{X}^2)$ will be a complete K\"ahler 4-manifold where $\mathcal{X}^1$, $\mathcal{X}^2$ are two symplectomorphic Killing fields that commute.
	This means $\mathcal{L}_{\mathcal{X}^i}\omega=\mathcal{L}_{\mathcal{X}^i}g=\mathcal{L}_{\mathcal{X}^i}J=0$, and $[\mathcal{X}^1,\mathcal{X}^2]=0$.
	The fields $\mathcal{X}^1$, $\mathcal{X}^2$ may be generators of a torus action, but dealing with Killing fields rather than action tori allows additional flexibility, not least because we can take arbitrary linear combination of the fields without worrying about whether it may come from a torus automorphism.
	It is convenient to assume $M^4$ is simply connected; we may pass to the universal cover if not.
	Because $\mathcal{L}_{\mathcal{X}^i}\omega=0$ we have $di_{\mathcal{X}^i}\omega=0$, and therefore functions $\varphi^1$, $\varphi^2$ exist with $\omega(\mathcal{X}^i,\cdot)=-d\varphi^i$.
	These are called the \textit{momentum} or \textit{action} coordinates on $M^4$.
	Then $\nabla\varphi^1=-J\mathcal{X}^1$, $\nabla\varphi^2=-J\mathcal{X}^2$, and the usual Nijenhuis relation implies that also $[\nabla\varphi^1,\nabla\varphi^2]=0$.
	To complete $(\varphi^1,\varphi^2)$ into a coordinate system we first choose a single leaf of the $\nabla\varphi^1$-$\nabla\varphi^2$ distribution, assign it coordinates $(\theta_1,\theta_2)=(0,0)$, and then push $\theta_1$, $\theta_2$ along the action created by the $\mathcal{X}^1$-$\mathcal{X}^2$ fields; these are the \textit{cyclic} or \textit{angle} coordinates.
	The coordinates $(\varphi^1,\theta_1,\varphi^2,\theta_2)$ are known as action-angle coordinates; see \cite{Ar}.
	The Arnold-Liouville reduction is
	\begin{equation}
		\Phi:M^4\rightarrow\mathbb{R}^2, \quad\quad
		\Phi(p)=\left(\varphi^1(p),\varphi^2(p)\right).
	\end{equation}
	The set $\Sigma^2=\Phi(M^4)$ in the $\varphi^1$-$\varphi^2$ plane is called the reduction of $M^4$.
	Because $g$ is invariant the reduction is generically a Riemannian submersion and $Int(\Sigma^2)$ inherits a Riemannian metric, $g_\Sigma$.
	We call $(\Sigma^2,g_\Sigma)$ the \textit{metric reduction} of $M^4$.
	
	Although $\Sigma^2$ need not be closed nor a polygon, we recover convexity.
	\begin{proposition}[\textit{cf.} Proposition \ref{PropConvex}] \label{PropConvexity}
		Assume $(M^4,J,g,\mathcal{X}^1,\mathcal{X}^2)$ is geodesically complete.
		Then its reduction $\Sigma^2$ is convex in the $\varphi^1$-$\varphi^2$ plane.
	\end{proposition}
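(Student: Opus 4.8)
The plan is to reduce the statement to the connectedness of the level sets of a single momentum map --- the non-compact analogue of Atiyah's lemma --- and then to invoke the elementary fact that a subset $S\subseteq\mathbb{R}^2$ is convex if and only if $S$ meets every affine line in a connected set. (For the nontrivial implication: if $p,q\in S$ and $\ell$ is the line through them, then $S\cap\ell$ is a connected subset of $\ell\cong\mathbb{R}$ containing $p$ and $q$, hence an interval containing the segment $[p,q]$.) For $(a,b)\neq(0,0)$ I set $V:=a\mathcal{X}^1+b\mathcal{X}^2$, a holomorphic Killing field, and $h:=a\varphi^1+b\varphi^2$, so that $-dh=\omega(V,\cdot)$, $\nabla h=-JV$, and $|\nabla h|^2=|V|^2$. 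For $t\in\mathbb{R}$ one has $\Phi^{-1}\{ax+by=t\}=h^{-1}(t)$, hence $\Sigma^2\cap\{ax+by=t\}=\Phi\big(h^{-1}(t)\big)$. Therefore it suffices to prove that every nonempty level set $h^{-1}(t)\subset M^4$ is connected: its continuous image is then a connected subset of the line, and convexity of $\Sigma^2$ follows.

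Next I would study $h$ as a Morse--Bott function. Its critical set is $\{dh=0\}=\{V=0\}$, the zero locus of $V$; since zero sets of Killing fields are disjoint unions of totally geodesic submanifolds and $\nabla V$ is skew and $J$-linear, the Hessian of $h$ on the normal space of each component is a nondegenerate $J$-invariant symmetric form, so the critical components are even-dimensional and the index and coindex there are even. Away from the trivial case $V\equiv 0$, on a $4$-manifold this means $\mathrm{Crit}(h)$ consists of totally geodesic surfaces and isolated points, near which the equivariant Darboux/slice theorem puts $h$ in the model form $h_0+\tfrac12\sum_i w_i|z_i|^2$. In the \emph{compact} case Atiyah's argument now applies verbatim: $\{h\le s\}$ and $\{h\ge s\}$ are built up by handle attachments of even (co)index, so they remain connected as $s$ varies, which forces $h^{-1}(t)$ to be connected for every $t$. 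The remaining work is to carry this out on our \emph{complete, non-compact} $M^4$.

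That last point is where geodesic completeness must be used, and I expect it to be the main obstacle. The plan is to supply the compactness substitute in the Morse theory of $h$ from completeness: using $|\nabla h|^2=|V|^2$, completeness of $(M^4,g)$, and the fact that a Killing field on a complete manifold is itself complete, one shows the negative gradient flow of $h$ is complete and satisfies a Palais--Smale-type deformation condition, so that sublevel sets deformation retract across intervals of regular values and are modified only by even-index surgery at critical values, with connectedness preserved throughout. The subtle points are ruling out finite-time escape of gradient trajectories toward infinity and accumulation of critical values --- both are where the argument genuinely uses completeness; an alternative is to argue first over the regular part, where $M^4_{\mathrm{reg}}\to Int(\Sigma^2)$ is a fibre bundle with connected fibre (torus, cylinder, or plane), and then reinsert the singular strata using the local model $h_0+\tfrac12\sum_i w_i|z_i|^2$ to see that adjoining them cannot disconnect a level set. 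Either way, once every $h^{-1}(t)$ is connected, $\Sigma^2$ meets every line in a connected set and is therefore convex, proving Proposition~\ref{PropConvexity}. Completeness is genuinely needed: for incomplete $M^4$, e.g.\ $\Phi^{-1}(R)$ with $R\subseteq\Sigma^2$ an open non-convex set, the conclusion fails, so no purely local argument can suffice.
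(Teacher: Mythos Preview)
Your strategy is genuinely different from the paper's, and the reduction to ``every affine slice $\Sigma^2\cap\{a\varphi^1+b\varphi^2=t\}$ is connected'' is correct and attractive. But the non-compact Morse step is not yet a proof, and both of your proposed fixes have problems.

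First, the gradient-flow argument: you write that completeness of the Killing field $V$ gives completeness of the negative gradient flow of $h$. It does not. The negative gradient is $-\nabla h=JV$, not $V$; the flow of $V$ preserves the level sets of $h$, whereas you need the transverse flow along $JV$ (or its normalization $JV/|V|^2$) to carry $\{h\le s\}$ onto $\{h\le s'\}$. Nothing you have said bounds $|V|$ from below along such trajectories or prevents them from escaping to infinity between two regular values, so the ``Palais--Smale-type'' claim is unsubstantiated. Second, the fibre-bundle alternative is circular: over $Int(\Sigma^2)$ the map $\Phi$ is a submersion with connected fibres, so $h^{-1}(t)\cap M^4_{\mathrm{reg}}=\Phi^{-1}\big(Int(\Sigma^2)\cap\{a\varphi^1+b\varphi^2=t\}\big)$ is connected \emph{iff} the line slice of $Int(\Sigma^2)$ is---which is exactly the convexity you are trying to prove.

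The paper sidesteps all of this with a two-line metric argument that uses the Hessian structure directly. Along a Euclidean segment $\gamma(t)=(1-t)\boldsymbol{\varphi}_0+t\boldsymbol{\varphi}_1$ in $\Sigma^2$ one has $|\dot\gamma|_{g_\Sigma}^2=\frac{d^2G}{dt^2}$ because $g_{\Sigma,ij}=\partial_i\partial_jG$; by Cauchy--Schwarz the $g_\Sigma$-length of $\gamma$ is at most $\big(\tfrac{dG}{dt}(\boldsymbol{\varphi}_1)-\tfrac{dG}{dt}(\boldsymbol{\varphi}_0)\big)^{1/2}$, a bound depending only on the endpoints. If $\Sigma^2$ were non-convex, one could slide such a segment so that it passes through (or limits onto) a non-included boundary point, which by Proposition~\ref{PropBdStruct2} lies at infinite $g_\Sigma$-distance---contradicting the uniform length bound. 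This is where completeness actually enters, via the ``non-included points are at infinity'' statement, and it replaces your Palais--Smale condition with something checkable. If you want to salvage the Morse-theoretic route, you would need an honest substitute for properness of $h$; the paper's length estimate is effectively that substitute.
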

	We use the term ``boundary point'' of $\Sigma^2$ to mean the extrinsic boundary of $\Sigma^2$ in the topology of the coordinate plane (as opposed to the intrinsic metric topology on $(\Sigma^2,g_{\Sigma})$), and we use $\partial\Sigma^2$ to indicate the set of all such points in the $\varphi^1$-$\varphi^2$ plane.
	We define the ``included boundary points'' to be $(\partial\Sigma^2){}_I=\partial\Sigma^2\cap\Sigma^2$ and the ``non-included boundary points'' to be $(\partial\Sigma^2){}_N=\partial\Sigma^2\setminus\Sigma^2$.
	If $p$ is a point in the $\varphi^1$-$\varphi^2$ coordinate plane and $\epsilon>0$, we use the notation $D_p(\epsilon)$ to mean the coordinate disk of radius $\epsilon$ around $p$; then $p\in\partial\Sigma^2$ if and only if every $D_p(\epsilon)$ contains both points of $\Sigma^2$ and $\mathbb{R}^2\setminus\Sigma^2$.
	See Figure~\ref{FigInitial}.
	\begin{figure}[h]
		\begin{tabular}{lll}
			\includegraphics[scale=0.4]{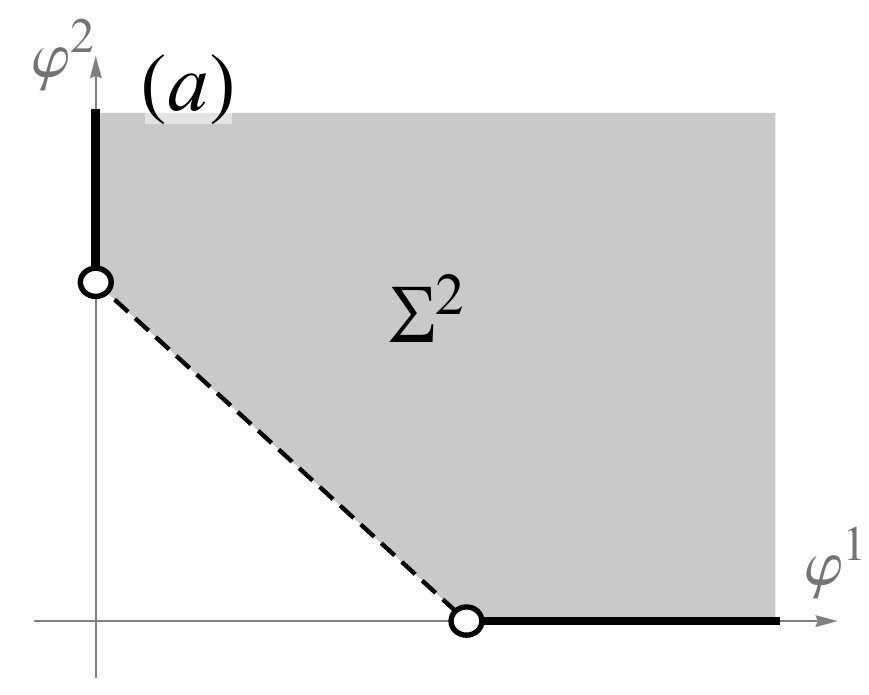}
			& \hspace{-0.0in}\includegraphics[scale=0.4]{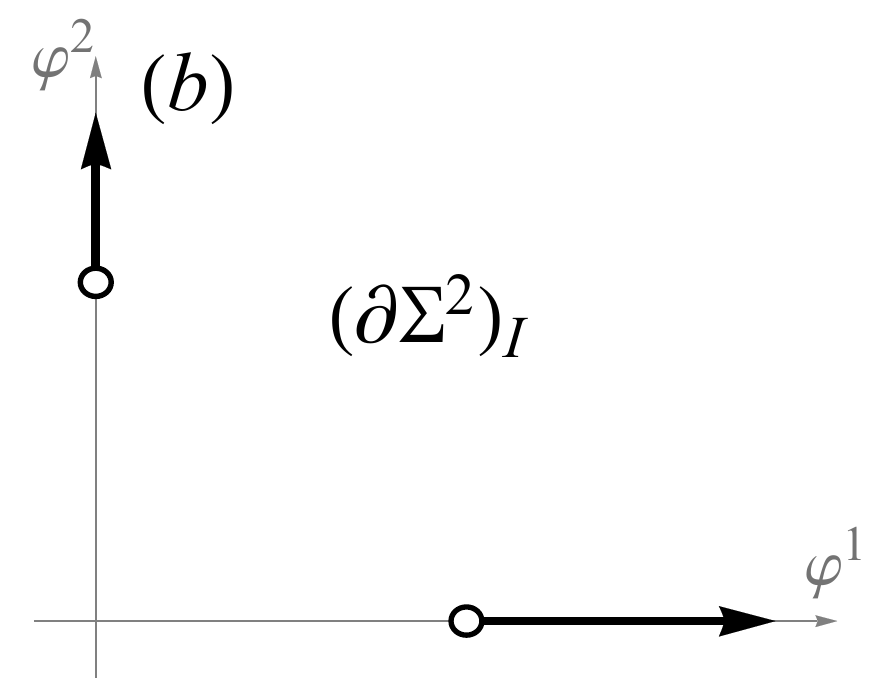}
			& \hspace{-0.0in}\includegraphics[scale=0.4]{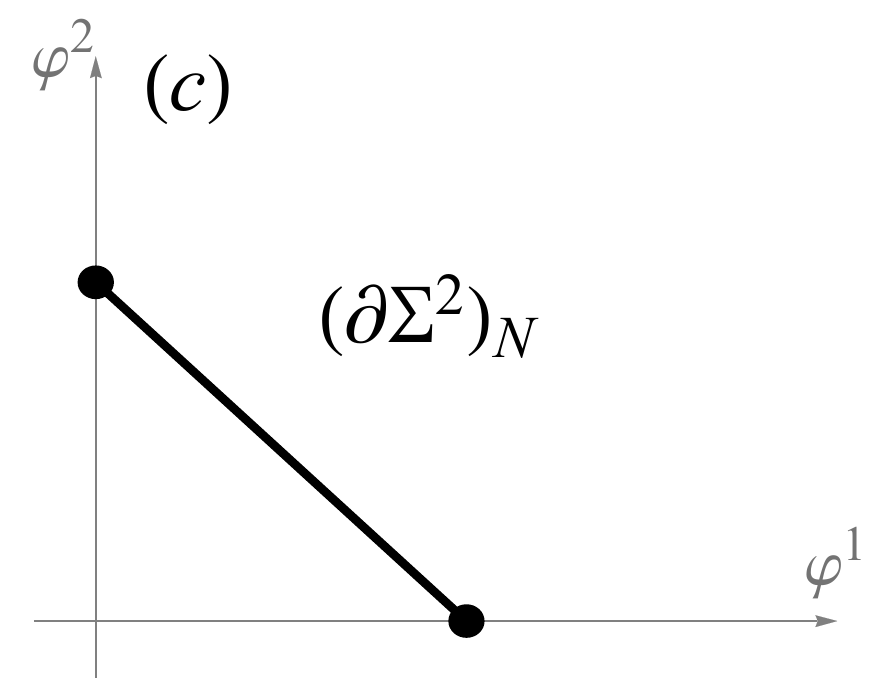}
		\end{tabular}
		\caption{
			(a) A typical reduction showing included and non-included boundary points with (b) the included boundary points and (c) the non-included boundary points.
		}
		\label{FigInitial}
	\end{figure}
	\begin{proposition}[Structure of $(\partial\Sigma^2)_I$] \label{PropBdStruct1}
		The included boundary points $p\in\Sigma^2\cap\partial\Sigma^2$ are precisely those for which $\Phi^{-1}(p)\subset{}M^4$ consists of points where the distribution $\{\mathcal{X}^1,\mathcal{X}^2\}$ has rank 1 or 0.
		In this case, a coordinate disk $D_p(\epsilon)$, $\epsilon>0$ exists so that, if the rank is $1$ at $\Phi^{-1}(p)$, then a linear function $m(\varphi^1,\varphi^2)$ exists so that $\Sigma^2\cap{}D_p(\epsilon)=\{m\ge0\}\cap{}D_p(\epsilon)$, and the rank is 0 then two linear functions $m_1$, $m_2$ exist so that $\Sigma^2\cap{}D_p(\epsilon)=\{m_1\ge0\}\cap\{m_2\ge0\}\cap{}D_p(\epsilon)$.
	\end{proposition}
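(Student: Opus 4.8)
The plan is to prove the biconditional in two steps and, in tandem, to read off the local shape of $\Sigma^2$ from a symplectic normal form; Proposition~\ref{PropConvexity} will be the tool that turns the local picture near one point of $\Phi^{-1}(p)$ into the description of $\Sigma^2$ near $p$. \emph{Step 1 (included boundary points have no rank-$2$ preimage).} If $q\in\Phi^{-1}(p)$ has $\{\mathcal{X}^1,\mathcal{X}^2\}$ of rank $2$, then on a neighborhood of $q$ the action--angle coordinates $(\varphi^1,\theta_1,\varphi^2,\theta_2)$ form an honest chart, in which $\Phi$ is the linear projection $(\varphi^1,\theta_1,\varphi^2,\theta_2)\mapsto(\varphi^1,\varphi^2)$ and hence an open map; thus $p=\Phi(q)\in Int(\Sigma^2)$ and $p\notin\partial\Sigma^2$. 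Contrapositively, if $p\in\Sigma^2\cap\partial\Sigma^2$ then every point of $\Phi^{-1}(p)$ has rank $0$ or $1$.

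\emph{Step 2 (the normal form at a rank-$\le1$ point).} Fix $q$ of rank $1$. Exactly one combination $\mathcal{Y}=a\mathcal{X}^1+b\mathcal{X}^2$ vanishes at $q$; since $\mathcal{Y}$ is Killing, vanishes at $q$, and is not identically $0$, its zero set $S$ is near $q$ a totally geodesic, $J$-invariant (hence symplectic) surface, the closure of its flow is a torus $T_{\mathcal{Y}}$ fixing $S$, and $\psi:=a\varphi^1+b\varphi^2$ satisfies $d\psi=-i_{\mathcal{Y}}\omega\equiv0$ on $S$, so $\psi\equiv\psi(q)$ there. The equivariant Darboux theorem for $T_{\mathcal{Y}}$ supplies, near $q$, a complex coordinate $z$ transverse to $S$ with $\psi=\psi(q)+\tfrac{c}{2}|z|^{2}$, where $c>0$ after possibly replacing $(a,b)$ by $-(a,b)$, while the complementary Killing field is nonzero and tangent to $S$ at $q$, so $\Phi|_{S}$ is a local diffeomorphism onto the line $L=\{m=0\}$, where $m:=a(\varphi^1-\varphi^1(q))+b(\varphi^2-\varphi^2(q))$. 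Consequently, for small $\delta>0$ there is $\epsilon>0$ with $\{m\ge0\}\cap D_{p}(\epsilon)\subseteq\Phi(B_{\delta}(q))\subseteq\{m\ge0\}$, $B_\delta(q)$ the metric ball. At a rank-$0$ point $q$, linearizing the (closure of the) action on $T_qM\cong\mathbb{C}^2$ yields weight vectors $w_1,w_2\in\mathbb{R}^{2}$, linearly independent because $M^4$ is a smooth manifold on which the distribution has generic rank $2$, with $\Phi=\Phi(q)+\tfrac12(|z_1|^{2}w_1+|z_2|^{2}w_2)$; so $\{m_1\ge0\}\cap\{m_2\ge0\}\cap D_p(\epsilon)\subseteq\Phi(B_\delta(q))\subseteq\{m_1\ge0\}\cap\{m_2\ge0\}$ for the affine functions with $m_i(p)=0$, $m_1(w_2)=0<m_1(w_1)$, $m_2(w_1)=0<m_2(w_2)$, and one checks that the two edges of this wedge are the images of the rank-$1$ surfaces $\{z_1=0\}$, $\{z_2=0\}$.

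\emph{Step 3 (from the local model to the statement).} Assume $p\in\Sigma^2$ with $\Phi^{-1}(p)$ all of rank $\le1$, and pick $q\in\Phi^{-1}(p)$. Step 2 gives $\Sigma^2\supseteq H_q\cap D_p(\epsilon)$, where $H_q=\{m\ge0\}$ or $H_q=\{m_1\ge0\}\cap\{m_2\ge0\}$; in particular $Int(\Sigma^2)\ne\varnothing$. I want $\Sigma^2\cap D_p(\epsilon')=H_q\cap D_p(\epsilon')$ for small $\epsilon'$, which at once gives $p\in\partial\Sigma^2$ and the asserted shape. Since $\Sigma^2$ is convex and already contains the half-disk or wedge $H_q\cap D_p(\epsilon)$, it suffices to show that $\Sigma^2$ contains no point of $D_p(\epsilon')$ lying strictly outside $H_q$; then $H_q$ is cut out near $p$ by supporting lines and the two containments match. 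Granting this, two tidy consequences follow from convexity: any further rank-$1$ point of $\Phi^{-1}(p)$ must produce the \emph{same} $m$ up to a positive factor (two non-proportional closed half-planes through $p$ have convex hull all of $\mathbb{R}^2$), and if $\Phi^{-1}(p)$ contains a rank-$0$ point then $H_q$ is a genuine wedge — exactly the dichotomy in the statement.

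\emph{Main obstacle.} The real work is the ``no point strictly outside $H_q$'' claim of Step 3. A naive argument — critical values of $\Phi$ are nowhere dense (Sard) and, by the local models, confined to $\partial\Sigma^2$ — fails precisely because $\Phi$ need not be proper (this is exactly why the reduction can fail to be closed). One must instead rule out a sequence $q_n\to\infty$ with $\Phi(q_n)\to p$ from the side $\{m<0\}$, say. I expect to control this by combining the completeness of $\mathcal{X}^1,\mathcal{X}^2$ on the geodesically complete $M^4$ with the rigidity of the Hamiltonian $\psi$ — it is locally minimal along the totally geodesic surface $S$, and its gradient $-J\mathcal{Y}$ commutes with $\mathcal{Y}$, so one can hope to follow the flow of $+\nabla\psi$ from such $q_n$ and contradict the strict inequality $\psi(q_n)<\psi(q)$ — together with convexity of $\Sigma^2$ to confine where preimages near $p$ may lie. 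The remaining ingredients (action--angle coordinates, the equivariant Darboux normal form, totally geodesic fixed sets of Killing fields, and the planar geometry of half-planes and wedges) are routine and I would only sketch them.
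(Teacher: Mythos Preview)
Your Steps 1 and 2 reach the right conclusions, but by a different and heavier route than the paper. Where you invoke the closure of the $\mathcal{Y}$-flow to a torus and the equivariant Darboux theorem, the paper argues directly from the Killing condition: the zero set of $\mathcal{X}=\alpha\mathcal{X}^1+\beta\mathcal{X}^2$ is an even-dimensional, totally geodesic, complex submanifold $L^2$, and one sees that $\varphi=\alpha\varphi^1+\beta\varphi^2$ has a local extremum along $L^2$ by taking a short segment normal to $L^2$ and sweeping it by the flow of $\mathcal{X}$ to form a disk on which the level sets of $\varphi$ are the $\mathcal{X}$-orbits, hence compact circles. No torus closure, no Darboux normal form. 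In the rank-$0$ case the paper simply notes that the point is the intersection of two such $L^2$'s and repeats the argument, rather than linearising the action on $T_qM$.

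The more serious issue is your Step 3. You plan to use Proposition~\ref{PropConvexity} to pass from the local picture near one preimage to the global description of $\Sigma^2\cap D_p(\epsilon)$, and you flag ``no point strictly outside $H_q$'' as the main obstacle. In the paper's logical order this is circular: convexity (the paper's Proposition~\ref{PropConvex}) is proved \emph{after} the present proposition, and its proof uses the local structure of included boundary points to assert that a segment with endpoints in $\Sigma^2$ which leaves $\Sigma^2$ must meet a \emph{non}-included boundary point. So you cannot simply cite convexity here. The paper does not isolate your ``main obstacle'' at all; it states the proposition with the hypothesis $p\in(\partial\Sigma^2)_I$ already in force, derives $m\ge 0$ on a neighbourhood of $L^2$ in $M^4$, and regards the equality $\Sigma^2\cap D_p(\epsilon)=\{m\ge0\}\cap D_p(\epsilon)$ as following from that local analysis together with the fibre-connectedness established separately (Lemma~\ref{LemmaStructInteriorPts}). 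If you want to keep your strategy, you would need to first prove the containment $\{m\ge0\}\cap D_p(\epsilon)\subseteq\Sigma^2$ (which your Step 2 gives), use only \emph{that} as input to the convexity argument, and then feed convexity back to get the reverse containment --- making the two-step bootstrap explicit rather than citing Proposition~\ref{PropConvexity} as a black box. Your sketched plan to chase gradient trajectories of $\psi$ to rule out preimages with $m<0$ is not needed and, as you note yourself, does not obviously close.
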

	\begin{proposition}[Structure of $(\partial\Sigma^2)_N$] \label{PropBdStruct2}
		If $q\in\partial\Sigma^2\setminus\Sigma^2$ is a non-included boundary point, then it is infinitely far away in the sense that if $\{p_i\}\subset{}M^4$ is any sequence of points with $\Phi(p_i)\rightarrow{}q$ in the coordinate topology, then $p_i$ diverges in $M^4$.
	\end{proposition}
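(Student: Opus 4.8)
The plan is to argue by contradiction, using nothing more than continuity of $\Phi=(\varphi^1,\varphi^2)$ together with completeness of $M^4$. Suppose $q\in\partial\Sigma^2\setminus\Sigma^2$ and that some sequence $\{p_i\}\subset M^4$ with $\Phi(p_i)\to q$ in the coordinate topology fails to diverge. Unwinding the definition of divergence on a manifold, this means there is a compact set $K\subset M^4$ with $p_i\in K$ for infinitely many $i$; passing to that subsequence we may assume $\{p_i\}\subset K$. If instead one reads ``$p_i$ diverges'' metrically, as $d(p_i,p_0)\to\infty$ for a fixed basepoint $p_0$, then non-divergence yields a subsequence lying in a closed metric ball, which is compact because $(M^4,g)$ is connected and geodesically complete (Hopf--Rinow); so again one reduces to a sequence contained in a fixed compact set.

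Since $K$ is compact, after passing to a further subsequence $p_i\to p$ for some $p\in K\subset M^4$. The momentum functions $\varphi^1,\varphi^2$ are smooth on $M^4$, hence $\Phi$ is continuous, so $\Phi(p_i)\to\Phi(p)$. But $\Phi(p_i)\to q$ by hypothesis and limits in $\mathbb{R}^2$ are unique, so $\Phi(p)=q$. Then $q\in\Phi(M^4)=\Sigma^2$, contradicting $q\in\partial\Sigma^2\setminus\Sigma^2$. Hence no non-divergent sequence $\{p_i\}$ with $\Phi(p_i)\to q$ can exist, which is exactly the assertion.

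The only point that requires any care is the passage from ``non-divergent sequence'' to ``sequence inside a fixed compact set'', i.e. reconciling the topological and metric notions of divergence and invoking completeness at the right place; with geodesic completeness in hand this is immediate from Hopf--Rinow, so I do not expect a genuine obstacle here. Note that this argument uses neither the toric structure, nor the scalar-flat hypothesis, nor the finer boundary description of Proposition~\ref{PropBdStruct1}: it is a soft consequence of continuity of the momentum map and completeness of the total space, and in particular the same statement holds for the Arnold--Liouville reduction of any complete Riemannian manifold carrying globally defined momentum-type coordinates.
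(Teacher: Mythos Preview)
Your argument is correct and matches the paper's own proof essentially step for step: both suppose a subsequence stays in a compact set $K$, use continuity of $\Phi$ to force the limit $q$ to lie in $\Phi(K)\subset\Sigma^2$, and derive a contradiction with $q\notin\Sigma^2$. Your added remark invoking Hopf--Rinow to reconcile the metric and topological meanings of ``diverges'' is a nice clarification that the paper leaves implicit.
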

	By Proposition \ref{PropBdStruct1}, the included boundary $(\partial\Sigma^2)_I$ consists of segments, rays, and lines in the $\varphi^1$-$\varphi^2$ plane, joined at vertex points.
	Proposition \ref{PropBdStruct3} says that such a segment, ray, or line $l\in(\partial\Sigma^2)_I$, is the image of the zero-set of a Killing field.
	When $l$ is maximally extended, we call its preimage $L^2=\Phi(l)$ a \textit{polar submanifold}---this is an analogy with the fact that the zero-set of a rotational Killing field on $\mathbb{S}^2$ consists of its north and south pole.
	See Figure \ref{FigDepictL2} for a depiction.
	\begin{proposition}[Polar submanifolds] \label{PropBdStruct3}
		Assume $l\subset(\partial\Sigma^2)_I$ is a maximally extended boundary segment, ray, or line that has one or both of its termini at included vertex points or non-included points.
		Then $L^2=\Phi^{-1}(l)$ is a holomorphically embedded, geodesically complete codimension-2 submanifold with a Killing field $\mathcal{X}$.
		
		Any terminus of $l$ at a non-included point corresponds to a manifold end of $L^2$.
		A terminus of $l$ at an included vertex point corresponds to a zero of the Killing field on $L^2$.
	\end{proposition}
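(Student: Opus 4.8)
The plan is to realize $L^2$ as a union of two-dimensional components of the zero set of the holomorphic Killing field whose momentum cuts out the line carrying $l$, and then to read off the behaviour at the termini from Propositions~\ref{PropBdStruct1} and \ref{PropBdStruct2}. Over an interior point $p$ of $l$ the distribution $\{\mathcal{X}^1,\mathcal{X}^2\}$ has rank $1$, so Proposition~\ref{PropBdStruct1} gives a linear $m=\alpha\varphi^1+\beta\varphi^2+\gamma$, $(\alpha,\beta)\ne(0,0)$, with $\Sigma^2\cap D_p(\epsilon)=\{m\ge0\}\cap D_p(\epsilon)$; since $l$ is a single maximally extended segment, ray, or line, the supporting line $\{m=0\}$ carrying it is globally determined, so after rescaling and shifting $\gamma$ we may assume $m\equiv0$ on $l$. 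Put $\mathcal{X}^\perp=\alpha\mathcal{X}^1+\beta\mathcal{X}^2$; then $\omega(\mathcal{X}^\perp,\cdot)=-d(m\circ\Phi)$ and $\nabla(m\circ\Phi)=-J\mathcal{X}^\perp$. The pullback $m\circ\Phi$ is $\ge0$ on $\Phi^{-1}(D_p(\epsilon))$ and vanishes exactly on $L^2\cap\Phi^{-1}(D_p(\epsilon))$, so it attains its minimum value $0$ along $L^2$; hence $d(m\circ\Phi)=0$ there, i.e.\ $\mathcal{X}^\perp=0$ along $\Phi^{-1}(\mathrm{int}\,l)$, and then along all of $L^2$, because $\Phi^{-1}(\mathrm{int}\,l)$ is dense in $L^2$ --- the only further points being preimages of included-vertex termini, which are limits of interior preimages by the non-divergence statement of Proposition~\ref{PropBdStruct2}. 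Thus $L^2\subseteq\{\mathcal{X}^\perp=0\}$, and $\mathcal{X}^\perp\not\equiv0$ since it is nonzero over $\mathrm{int}\,\Sigma^2$.

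Now $\{\mathcal{X}^\perp=0\}$ is a disjoint union of totally geodesic complex --- hence K\"ahler --- submanifolds, each either a complex curve (real codimension $2$) or an isolated fixed point. Over an interior point $p$ of $l$, pick $q$ with $\mathcal{X}^\perp(q)=0\ne\mathcal{X}(q)$, where $\mathcal{X}=\gamma\mathcal{X}^1+\delta\mathcal{X}^2$ is any complement of $\mathcal{X}^\perp$; since $[\mathcal{X},\mathcal{X}^\perp]=0$, the flow of $\mathcal{X}$ preserves $\{\mathcal{X}^\perp=0\}$, so the one-dimensional $\mathcal{X}$-orbit through $q$ lies in it and the component through $q$ is a totally geodesic complex curve. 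Over an included-vertex terminus, the preimage point is a non-isolated zero of $\mathcal{X}^\perp$ (a limit of such curve points, again by Proposition~\ref{PropBdStruct2}), hence lies on a codimension-$2$ component too. So every point of $L^2$ lies on a codimension-$2$ totally geodesic K\"ahler component of $\{\mathcal{X}^\perp=0\}$, and near each point $L^2$ coincides with such a surface: convexity of $\Sigma^2$ (Proposition~\ref{PropConvexity}) makes $\{m=0\}$ a supporting line and maximality of $l$ then gives $\{m=0\}\cap\Sigma^2=l$, so the local surface stays inside $L^2$. Since $L^2$ is also closed in $M^4$, it is a properly embedded, holomorphic, totally geodesic codimension-$2$ submanifold; it is connected because $\Phi|_{L^2}$ surjects onto the connected $l$ with connected fibers ($\mathcal{X}$-orbits over $\mathrm{int}\,l$, points over vertices). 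Being totally geodesic in the geodesically complete $M^4$, $L^2$ is geodesically complete. Since $[\mathcal{X},\mathcal{X}^\perp]=0$, the flow of $\mathcal{X}$ preserves $L^2$, so $\mathcal{X}$ is tangent to $L^2$ and $\mathcal{X}|_{L^2}$ is a holomorphic Killing field of $(L^2,g|_{L^2})$ --- the field $\mathcal{X}$ of the statement.

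For the termini: at a non-included terminus $q$, take $p_i\to q$ in $\mathrm{int}\,l$ and $q_i\in(\Phi|_{L^2})^{-1}(p_i)$; Proposition~\ref{PropBdStruct2} forces $q_i\to\infty$ in $M^4$, hence in the closed $L^2$, so the part of $L^2$ over a small neighbourhood of $q$ leaves every compact set --- a manifold end of $L^2$. At an included-vertex terminus $v$, $\Phi^{-1}(v)$ consists of common fixed points of $\mathcal{X}^\perp$ and $\mathcal{X}$, so $\mathcal{X}|_{L^2}$ vanishes there, which is the asserted zero of the Killing field on $L^2$.

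The step I expect to be the main obstacle is the passage from the set inclusion $L^2\subseteq\{\mathcal{X}^\perp=0\}$ to ``$L^2$ is an embedded surface''. It combines three ingredients: the dimension count ruling out isolated zeros over $\mathrm{int}\,l$ (using that the rank there is exactly $1$, so a nontrivial $\mathcal{X}$-orbit sits inside the zero set); the convexity-and-maximality argument that the local totally geodesic surfaces do not poke out of $L^2$; and the non-divergence statement of Proposition~\ref{PropBdStruct2} to handle preimages of included vertices. Connectedness of $L^2$ and of the vertex fibers $\Phi^{-1}(v)$ may need a separate connectedness input, e.g.\ from simple-connectivity of $M^4$. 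The global constancy of $(\alpha:\beta)$, which makes $\mathcal{X}^\perp$ well defined, is immediate since $l$ is a single maximally extended line.
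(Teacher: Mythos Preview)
Your approach is correct and is essentially the one the paper takes. The paper does not give Proposition~\ref{PropBdStruct3} a standalone proof; the substantive content is folded into the proof of Proposition~\ref{PropStructIncluded}, where it is shown that the zero set $L^2=\{\mathcal{X}=0\}$ of the field $\mathcal{X}=\alpha\mathcal{X}^1+\beta\mathcal{X}^2$ is a two-dimensional, totally geodesic, complex submanifold (even-dimensionality of Killing zero sets plus the commuting nonzero field forcing dimension $\ge1$, and holomorphicity of $\mathcal{X}$ giving the complex structure). Your write-up supplies exactly these ingredients and then fills in what the paper leaves implicit: the identification $L^2=\Phi^{-1}(l)$ via the convexity/supporting-line argument, completeness from being totally geodesic and closed in a complete ambient space, tangency of the complementary field $\mathcal{X}$ to $L^2$, and the terminus analysis via Propositions~\ref{PropBdStruct1} and~\ref{PropBdStruct2}. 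The connectedness of fibers that you flag as a possible loose end is handled in the paper by Lemma~\ref{LemmaStructInteriorPts}.
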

	\begin{figure}[h]
		\includegraphics[scale=0.4]{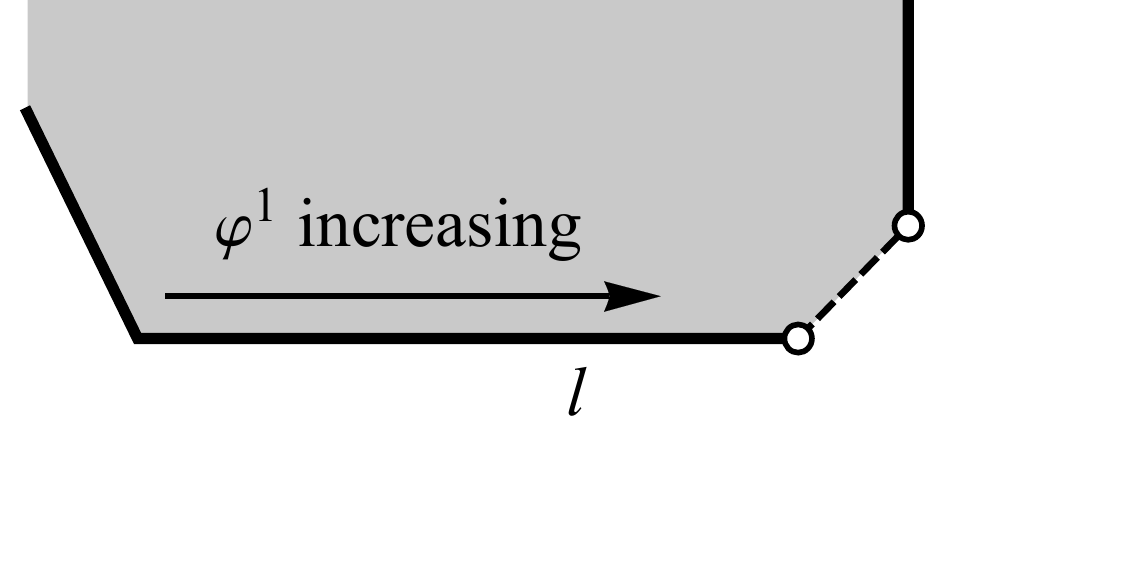}\hspace{0.25in}	\includegraphics[scale=0.4]{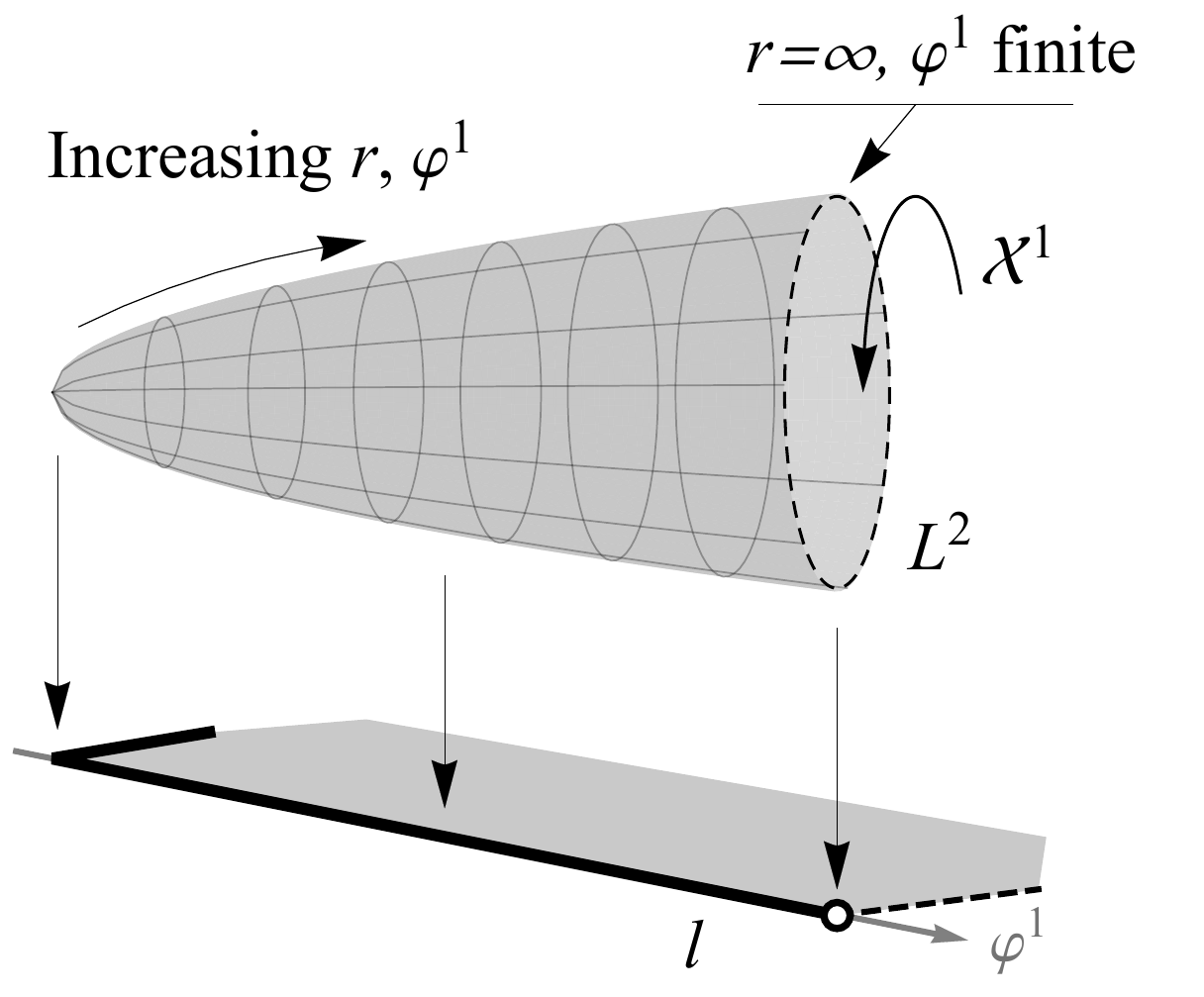}
		\caption{
			A polar submanifold $L^2\subset{}M^2$ over a segment $l\subset\Sigma^2$, with distance function $r$ and momentum function $\varphi^1$.
			Depicted is the possibility that $l$ has a terminus on a non-included point.
		}
		\label{FigDepictL2}
	\end{figure}
	We give geometric criteria on $M^4$ that forces the reduction $\Sigma^2$ to be closed.
	First we remark that the Killing field $\mathcal{X}$ on a polar submanifold $L^2$ gives rise to a momentum function $\varphi:L^2\rightarrow\mathbb{R}$ in the usual way: $d\varphi=-i_{\mathcal{X}}\omega$.
	Then using $\varphi$ we have a standard construction for a distance function $r:L^2\rightarrow\mathbb{R}^2$, this being $dr=|d\varphi|^{-1}d\varphi$.
	The trajectories of $\nabla{}r$ are perpendicular to the Killing field $\mathcal{X}$, and we call such a distance function a radial distance function.
	\begin{theorem}[Criteria for closedness of $\Sigma^2$]\label{ThmClosed}
		Whenever $L^2\subset{}M^4$ is a polar submanifold that has an unbounded radial distance function $r$, assume one of the following holds:
		\begin{itemize}
			\item The Killing field $\mathcal{X}$ on $L^2$ decays slowly (or not at all): a constant $C_1>0$ exist so when $|r|$ is sufficiently large, then $|\mathcal{X}|\ge{}C_1|r|^{-1}$,
			\item The negative part of the Gaussian curvature $K$ of $L^2$ decays quickly: when $|r|$ is sufficiently large, then $K\ge-2/r^2$.
		\end{itemize}
		Then the image $l\subset\Sigma^2$ of any polar submanifold $L^2$ is closed, if $L^2$ has an unbounded distance function then $l$ has at most one terminal point (which is an included point).
		If every component of $\partial\Sigma^2$ has an included point, then $\Sigma^2$ is closed.
	\end{theorem}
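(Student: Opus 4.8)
The plan is to reduce everything to the behaviour of the momentum function $\varphi$ along the radial flow on a single polar submanifold, together with an elementary ODE comparison for its warping function; the last assertion is then a local–convexity wrap‑up. First I would put a polar submanifold $L^2=\Phi^{-1}(l)$, with Killing field $\mathcal{X}$, into normal form. Away from the isolated zeros of $\mathcal{X}$ — which by Proposition~\ref{PropBdStruct3} are exactly the preimages of the included vertex termini of $l$ — the flow of $\mathcal{X}$ together with $\nabla r$ gives coordinates $(r,t)$ with $\partial_t=\mathcal{X}$, $\partial_r=\nabla r$, $|\nabla r|\equiv1$, $\nabla r\perp\mathcal{X}$; since $\mathcal{X}$ is Killing the metric is $g_{L^2}=dr^2+\rho(r)^2\,dt^2$ with $\rho=|\mathcal{X}|$ and Gaussian curvature $K=-\rho''/\rho$. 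Because $\varphi$ and $r$ are constant on the $\mathcal{X}$‑orbits, $\varphi$ is a function of $r$ alone, and since $\nabla\varphi=-J\mathcal{X}$ and $\nabla r$ are both orthogonal to $\mathcal{X}$, hence parallel, $d\varphi/dr=|\mathcal{X}|=\rho$, so $\varphi(r)=\varphi(r_0)+\int_{r_0}^r\rho$. Finally $r$ has no critical point, so geodesic completeness forces its range to be a closed interval whose finite endpoints are attained precisely at zeros of $\mathcal{X}$; in particular if $r$ is bounded then $L^2$ is compact and $l$ is already a closed segment with two included vertex termini.

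The analytic core is the claim: if $\rho>0$ is smooth on $[r_0,\infty)$ with $\rho''\le2\rho/r^2$, then $\int_{r_0}^\infty\rho\,dr=\infty$ (and symmetrically as $r\to-\infty$). I would prove it via $w=\rho/r^2$: one computes $(r^4w')'=r^2(\rho''-2\rho/r^2)\le0$, so $r^4w'$ is non‑increasing, hence either $r^4w'\ge0$ throughout, giving $\rho\ge w(r_0)\,r^2$, or $r^4w'(r_1)=-\delta<0$ for some $r_1$, so that $w'\le-\delta r^{-4}$ and $w(r)\ge\int_r^\infty\delta s^{-4}\,ds=\delta/(3r^3)$, i.e. $\rho\ge\delta/(3r)$ — in both cases the integral diverges. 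The constant is sharp (for $\rho''=2\rho/r^2$ the decaying solution is $\rho=1/r$, still non‑integrable), which is why the curvature hypothesis reads $K\ge-2/r^2$. Now the first alternative gives $\rho=|\mathcal{X}|\ge C_1|r|^{-1}$ outright and the second gives $\rho''=-K\rho\le2\rho/r^2$; either way, whenever $L^2$ has an unbounded radial distance function, $\varphi(r)\to\pm\infty$ as $r\to\pm\infty$ along every unbounded end.

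Combining this with the normal form, $l$ (identified with $\varphi(L^2)\subset\mathbb{R}$) is an interval whose finite endpoints are attained, hence are included vertices, so $l$ is closed; and if $r$ is unbounded, at least one end of $L^2$ pushes $l$ out to infinity there, leaving at most one terminal point, necessarily an included vertex. With the bounded‑$r$ case this proves the first two assertions for every polar submanifold. For the last, note $\Sigma^2$ is closed iff $(\partial\Sigma^2)_N=\emptyset$, so it suffices to show $(\partial\Sigma^2)_I$ is clopen in $\partial\Sigma^2$: then every component lies wholly in $(\partial\Sigma^2)_I$ or wholly in $(\partial\Sigma^2)_N$, and the hypothesis that every component meets $(\partial\Sigma^2)_I$ forces $(\partial\Sigma^2)_N=\emptyset$. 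Openness is immediate from Proposition~\ref{PropBdStruct1}: near an included point $\Sigma^2$ is, in a small coordinate disk, a closed half‑plane or closed quadrant whose topological boundary lies in $\Sigma^2$. For relative closedness I would suppose a non‑included $q$ is a limit of included points $p_i$; since no boundary edge has a non‑included terminus (just shown), either infinitely many $p_i$ lie on one closed edge $l$, whence $q\in l\subset\Sigma^2$, absurd, or the $p_i$ exhaust infinitely many short edges $l_i$ accumulating at $q$, joined at rank‑$0$ vertices $v_i\to q$.

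The hard part will be excluding this last configuration. In it the $\Phi^{-1}(v_i)$ are distinct torus‑fixed points which, by Proposition~\ref{PropBdStruct2}, diverge in $M^4$, while the two‑spheres $L_i^2=\Phi^{-1}(l_i)$ joining consecutive ones have momentum‑extent, hence area, tending to $0$; ruling out such a string of shrinking polar two‑spheres limiting onto a non‑included point is not forced by the local normal form, and is where one must bring in more than Steps~1–3 — the behaviour of $\Phi$ near the interior locus over which it is a Riemannian submersion (in particular some form of properness there). Granting that, $(\partial\Sigma^2)_I$ is clopen, and $\Sigma^2$ is closed as soon as every boundary component contains an included point.
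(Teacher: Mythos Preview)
The paper does not actually supply a proof of Theorem~\ref{ThmClosed}: the statement appears in the introduction and is invoked in Section~5 (in and after Lemma~\ref{LemmaModelKillingBounded}), but no argument for it is given anywhere in the text. Your proposal therefore has to stand on its own.

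For the first two conclusions your argument is sound. The warped-product form $g_{L^2}=dr^2+\rho(r)^2\,dt^2$ with $\rho=|\mathcal{X}|$ and $K=-\rho''/\rho$ is the correct normal form on a polar submanifold, and $d\varphi/dr=\rho$ follows directly from the paper's own definition $dr=|d\varphi|^{-1}d\varphi$. The ODE step is correct: with $w=\rho/r^2$ one checks $(r^4w')'=r^2(\rho''-2\rho/r^2)\le0$, and both branches of your dichotomy force $\int^\infty\rho=\infty$, hence $\varphi\to\pm\infty$ along each unbounded end of $L^2$. Your observation that the constant $2$ is sharp (the borderline Euler equation $\rho''=2\rho/r^2$ has decaying solution $\rho=1/r$, which is still non-integrable) is a good sanity check on the hypothesis. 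The bounded-$r$ case reduces, as you say, to completeness of $L^2$: the $\nabla r$-trajectories are unit-speed geodesics, so a finite supremum or infimum of $r$ is attained at a zero of $\mathcal{X}$, i.e.\ at an included vertex.

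The gap you flag in the final assertion is genuine, and you are right not to paper over it. Closedness of each individual edge does not by itself exclude infinitely many short edges, with lengths tending to zero, accumulating at a non-included boundary point $q$; nothing in Propositions~\ref{PropBdStruct1}--\ref{PropBdStruct3} forbids this configuration. Ruling it out requires either an additional finiteness hypothesis (finite topological type, as assumed in Theorem~\ref{ThmSFClassification}, would bound the number of rank-$0$ points and hence the number of edges) or a separate properness argument for $\Phi$ near $q$. Since the paper supplies neither a proof nor a pointer for this step, the issue is left open there as well; your proposal is honest in isolating it rather than glossing over it.
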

	
	We address the case that $(\partial\Sigma^2)_I$ is empty.
	\begin{theorem}[c.f. Theorem \ref{ThmRTwoClassification}] \label{ThmComplete}
		Assume $(M^4,J,g,\mathcal{X}^1,\mathcal{X}^2)$ has non-negative scalar curvature $s\ge0$ and that the distribution $\{\mathcal{X}^1,\mathcal{X}^2\}$ is always rank 2 (this is the same as $M^4\rightarrow\Sigma^2$ being a Riemannian submersion, or $(\partial\Sigma^2)_I=\varnothing$).
		
		Then $M^4$ is flat $\mathbb{C}^2$, its Killing fields are translations, and $(\Sigma^2,g_\Sigma)$ is flat $\mathbb{R}^2$.
	\end{theorem}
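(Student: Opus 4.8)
The plan is to put $g$ into Guillemin--Abreu normal form, recast Abreu's scalar-curvature equation as a Liouville-type statement on the metric reduction, and read off flatness. First I would normalize. The hypothesis that $\{\mathcal X^1,\mathcal X^2\}$ is everywhere rank $2$ means the Killing fields never vanish and $\Phi\colon M^4\to\Sigma^2$ is a Riemannian submersion onto the open convex set $\Sigma^2$ (Proposition~\ref{PropConvexity}), every coordinate boundary point of which lies in $(\partial\Sigma^2)_N$. Since $M^4$ is simply connected the fibres of $\Phi$ cannot be compact, hence are copies of $\mathbb R^2$, and $(\varphi^1,\varphi^2)$ extends to global action--angle coordinates $(\varphi^1,\varphi^2,\theta_1,\theta_2)$ in which the K\"ahler condition forces
\[
	g \;=\; \sum_{i,j}u_{,ij}\,d\varphi^i d\varphi^j \;+\; \sum_{i,j}u^{ij}\,d\theta_i d\theta_j
\]
for a strictly convex symplectic potential $u$ on $\Sigma^2$, with $(u^{ij})=(u_{,ij})^{-1}$; thus $g_\Sigma=\sum_{i,j}u_{,ij}\,d\varphi^i d\varphi^j$ is the Hessian metric of $u$. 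I would then observe that $g_\Sigma$ is complete: a curve of finite $g_\Sigma$-length limiting to a point of $\partial\Sigma^2$ would lift horizontally to a curve in $M^4$ of the same finite length, which converges by completeness of $M^4$, contradicting Proposition~\ref{PropBdStruct2}.

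Next I would rewrite Abreu's formula $s=-\sum_{i,j}\partial_i\partial_j u^{ij}$. A short computation, using that the cofactor matrix of a Hessian is divergence free, turns it into the identity
\[
	\Delta_{g_\Sigma} V \;+\; \tfrac12\, s\, V \;=\; 0,\qquad
	V:=\bigl(\det u_{,ij}\bigr)^{-1/2}=\bigl|\mathcal X^1\wedge\mathcal X^2\bigr|_g ,
\]
where $V>0$ is the fibre-area function of the submersion. Hence $s\ge 0$ makes $V$ a positive superharmonic function on the complete surface $(\Sigma^2,g_\Sigma)$, and the theorem reduces to showing that $V$ is constant.

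This last reduction is the main obstacle. It is precisely the statement that $(\Sigma^2,g_\Sigma)$ is parabolic, and it is where geodesic completeness is genuinely used; I would try to establish it by exploiting the affine flatness of the $\varphi$-coordinates — e.g.\ bounding the total negative curvature or the $g_\Sigma$-volume growth and invoking a Cohn--Vossen / Blanc--Fiala--Huber type criterion, or arguing directly by comparison along $g_\Sigma$-geodesic rays in the spirit of the polar-submanifold estimates behind Theorem~\ref{ThmClosed}. Granting it, $V$ constant forces $\det u_{,ij}$ constant and $s\equiv 0$; a J\"orgens--Calabi type rigidity for complete Hessian metrics of constant Hessian-determinant (which in particular rules out $\Sigma^2$ being a proper convex subdomain, so $\Sigma^2=\mathbb R^2$) then shows that $u$ is a positive-definite quadratic polynomial. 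After a linear change of the $\varphi$-coordinates the displayed metric becomes $\sum_i (d\varphi^i)^2+\sum_i (d\theta_i)^2$: this is flat $\mathbb C^2$, its Killing fields are the translations $\partial_{\theta_i}$, and $g_\Sigma$ is flat $\mathbb R^2$, as asserted.
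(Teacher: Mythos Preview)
Your overall architecture matches the paper's: reduce to the identity $\Delta_{g_\Sigma}V+\tfrac12 sV=0$ with $V=\mathcal V^{1/2}$, observe $V$ is positive superharmonic on the complete surface $(\Sigma^2,g_\Sigma)$, and conclude $V$ is constant once parabolicity is known. You also correctly flag parabolicity of $(\Sigma^2,g_\Sigma)$ as the crux. The gap is that you do not prove it, and the approaches you list are unlikely to succeed as stated. The Gaussian curvature of $g_\Sigma$ satisfies $K_\Sigma=\tfrac12\bigl(|\Gamma_{ij}^k|^2-|\nabla\log\mathcal V^{1/2}|^2\bigr)$, which is not signed, so there is no a priori bound on total negative curvature or on volume growth to feed into Cohn--Vossen, Blanc--Fiala--Huber, or Cheng--Yau. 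The ``affine flatness of the $\varphi$-coordinates'' by itself does not supply such bounds.

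The paper's device is a specific conformal change: set $\widetilde g_\Sigma=\mathcal V^{1/2}g_\Sigma$. A direct Christoffel-symbol computation using the Hessian structure $G_{ij}=\partial_i\partial_j G$ gives
\[
\widetilde K_\Sigma\;=\;\tfrac12\,\mathcal V^{-1/2}\Bigl(|\Gamma_{ij}^k|^2+\tfrac12 s\Bigr)\;\ge\;0
\]
whenever $s\ge0$. The remaining point is that $(\Sigma^2,\widetilde g_\Sigma)$ is still complete; this is Lemma~\ref{LemmaConformalComplete}, proved by a Laplacian-comparison barrier for $\mathcal V^{1/2}$ against $r^{-1}$ on annuli in the $\widetilde g_\Sigma$-metric (using $\widetilde K_\Sigma\ge0$), which forces any finite-$\widetilde g_\Sigma$-length escaping geodesic to have finite $g_\Sigma$-length as well, a contradiction. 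With $(\Sigma^2,\widetilde g_\Sigma)$ complete, simply connected, and of non-negative curvature, Cheng--Yau gives parabolicity, hence biholomorphism with $\mathbb C$, and Liouville forces $V$ constant.

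For the endgame, your appeal to J\"orgens--Calabi is a legitimate alternative to the paper's Lemma~\ref{LemmaTConstFlat}. The paper instead argues directly: $\mathcal V$ constant makes $\varphi^1,\varphi^2$ harmonic on $\Sigma^2$, the transition function between the associated holomorphic coordinates has harmonic imaginary part $\sqrt{\mathcal V}/|\nabla\varphi^1|^2$, hence $|\nabla\varphi^1|^{-2}$ is harmonic and bounded below, so constant by Liouville, and all metric coefficients are constant in $(\varphi^1,\varphi^2)$. Either route closes the argument once parabolicity is in hand; the conformal change $\widetilde g_\Sigma=\mathcal V^{1/2}g_\Sigma$ is the idea you are missing.
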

	Finally we consider the scalar-flat case, and classify these metrics based on their natural asymptotic characteristics.
	We require a broader range of asymptotic models that the usual ALE-ALF-ALG-ALH schema of \cite{CK} \cite{Etesi}.
	The ALF asymptotic model in particular is much too rigid, due to the fact that it requires not only quadratic curvature decay and cubic volume growth, but a very particular collapsing condition: the Hopf circles on the $\mathbb{S}^3$ level-sets of the distance function must asymptotically be the fibers of a collapsing F-structure (in the sense of Cheeger-Gromov \cite{CG86}).
	This condition is too inflexible to model the needed range of phenomena, as the examples of the generalized Taub-NUT metrics demonstrates \cite{Web3}.
	
	To create the needed range of asymptotic models, we use the following definition.
	\begin{defn} \label{DefModeled}
		Assume $(M^n,g)$ and $(\widetilde{M}{}^n,\tilde{g})$ are Riemannian manifolds and that a compact set $\widetilde{K}\subset\widetilde{M}{}^n$ and covering map $\pi:\widetilde{M}{}^4\setminus\widetilde{K}\rightarrow{}M^4$ exist so that
		\begin{equation}
			\left\|\pi^*g\,-\,\tilde{g}\right\|_{\tilde{g}}\;=\;O(\tilde{\rho}{}^{-l})
			\quad \text{for $l>0$, or $o(1)$ for $l=0$}
		\end{equation}
		where $\tilde\rho$ is a distance function on $\widetilde{M}^4$.
		Then we say $(M^n,g)$ is \emph{asymptotically modeled on $(\widetilde{M}{}^n,\tilde{g})$} to degree $l$.
		If, in addition, the $M^n$ has Killing fields $\{\mathcal{X}^i\}$ and the model $\widetilde{M}{}^4$ has Killing fields $\{\widetilde{\mathcal{X}}{}^i\}$ so that $\pi_*\widetilde{\mathcal{X}}{}^i=\mathcal{X}^i$, we say $M^4$ is \emph{equivariantly asymptotically modeled} on $\widetilde{M}^4$ to degree $l$.
	\end{defn}
	This is similar to typical definitions in the literature except we do not require that the manifold's curvature be similar to the model curvature.
	
	\begin{figure}[h]
		\begin{tabular}{ll}
			\includegraphics[scale=0.35]{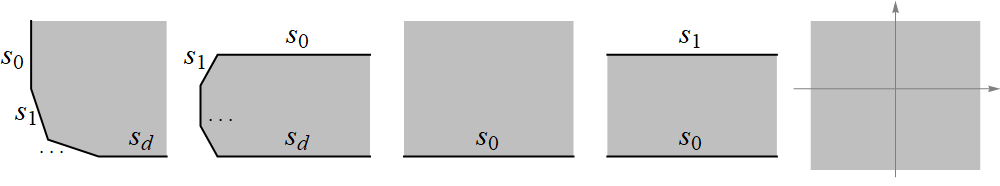}
		\end{tabular}
		\caption{
			The five closed polygon types: the general case, parallel-ray case, half-plane case, parallel-line case, and the plane.
		} \label{FigFiveModel}
	\end{figure}
	Depicted in Figure~\ref{FigFiveModel} are the five kinds of non-compact closed polygons.
	
	From \cite{Web1}, we know any manifold $M^4$ whose reduction is a half-plane has only has a single nontrivial scalar-flat metric up to homothety: the ``exceptional half-plane'' instanton metric.
	The fourth case, the strip, is not well studied.
	The fifth case, the plane, is always flat by Theorem \ref{ThmComplete}.
	The general case and parallel-ray case give larger varieties of metrics, all of which were described explicitly in the classification theorem of \cite{Web1}.
	
	\begin{theorem} \label{ThmSFClassification}
		Assume $(M^4,J,g,\mathcal{X}^1,\mathcal{X}^2)$ is a toric K\"ahler 4-manifold that is scalar-flat, geodesically complete, and has finite topological type.
		
		Then its reduction $\Sigma^2$ is closed if and only if $M^4$ is one of the following:
		\begin{enumerate}
			\item $M^4$ is asymptotically ``general,'' meaning it is one of the following:
			\begin{itemize}
				\item \emph{``Asymptotically ALE''}, modeled on Euclidean space to order $1$
				\item \emph{``Asymptotically ALF''}, modeled on the Taub-NUT to order $1$
				\item \emph{``Asymptotically ALF-like''}, modeled on a generalized Taub-NUT with chirality $k\in(-1,0)\cup(0,1)$, to order $1$
				\item \emph{``Asymptotically Exceptional''}, modeled on a maximally chiral Taub-NUT (with chirality $k=-1$ or $k=1$), to order $1$
			\end{itemize}
			\item $M^4$ is ``asymptotically equivariantly $\mathbb{R}^2\times\mathbb{S}^2$,'' meaning its asymptotic model is any of the model metrics in (\ref{EqnSRCompMomentums}) of Section \ref{SubSecRS}, to order $0$,
			\item $M^4$ is an exceptional half-plane instanton of \cite{Web1}
			\item $\Sigma^2$ is a closed strip
			\item $M^4$ is flat
		\end{enumerate}
	\end{theorem}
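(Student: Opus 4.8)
The plan is to reduce the theorem to a classification of the convex polygon $\Sigma^2$ and then read off, polygon shape by polygon shape, which metrics and asymptotic models occur. For the shape classification I would combine the structural results already proved: Proposition \ref{PropConvexity} gives convexity of $\Sigma^2$, and if $\Sigma^2$ is closed then Proposition \ref{PropBdStruct1} forces $\partial\Sigma^2=(\partial\Sigma^2)_I$ to be a union of segments, rays and lines meeting at vertices, each edge being the image of a polar submanifold (Proposition \ref{PropBdStruct3}). Finite topological type of $M^4$ bounds the number of edges, vertices, and polar submanifolds, so $\Sigma^2$ is a closed convex polygon with finitely many edges; by the standard structure of such sets (decompose along the recession cone) it is exactly one of: a compact polygon, a ``general'' unbounded polygon with two non-parallel rays at infinity, a ``parallel-ray'' polygon, a half-plane, a strip, or the whole plane --- the five non-compact types of Figure \ref{FigFiveModel} together with the bounded case. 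The bounded case does not occur for simply connected $M^4$: a compact scalar-flat K\"ahler surface has $\langle c_1,[\omega]\rangle=0$, which together with the Hodge index theorem and the existence of two independent Killing fields (forced to be parallel on a compact Ricci-flat surface) leaves only the flat torus, which is not simply connected; passing to the universal cover it is the flat case (5). So it remains to identify the metrics over each of the five non-compact shapes.

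For the forward direction I would then go shape by shape. If $\Sigma^2$ is the plane then $(\partial\Sigma^2)_I=\varnothing$, so Theorem \ref{ThmComplete} applies and $M^4$ is flat $\mathbb{C}^2$, case (5). If $\Sigma^2$ is a half-plane, a general polygon, or a parallel-ray polygon, I would invoke the explicit classification of scalar-flat toric K\"ahler metrics over these reductions from \cite{Web1}: over a half-plane the only nontrivial metric is the exceptional half-plane instanton (all others flat), giving cases (3) and (5); over the general and parallel-ray polygons one gets the families of \cite{Web1}, and the remaining task is to re-express their asymptotics in the language of Definition \ref{DefModeled}. Here I would write each such metric in action--angle coordinates, extract the leading term, and match it to the Euclidean model or to the (generalized) Taub-NUT family of \cite{Web3}: a general (cone-like) reduction gives Euclidean asymptotics to order $1$ (``asymptotically ALE'', or flat $\mathbb{C}^2$ with two rotational Killing fields, again case (5)), while a parallel-ray reduction gives a generalized Taub-NUT model to order $1$, with chirality $k\in[-1,1]$ read off from the asymptotic opening data of the two rays --- $k=0$ (``ALF''), $k\in(-1,0)\cup(0,1)$ (``ALF-like''), $k=\pm1$ (``exceptional''). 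Finally, if $\Sigma^2$ is a strip I would analyze the scalar-flat equation on the strip directly: the two parallel boundary lines are images of polar submanifolds $L^2_\pm$ with unbounded momentum and unbounded radial distance functions, and I would argue that the transverse direction collapses onto an $\mathbb{S}^2$ of stabilizing size while the longitudinal direction opens like $\mathbb{R}^2$, so $M^4$ is equivariantly asymptotically $\mathbb{R}^2\times\mathbb{S}^2$ to order $0$ (case 2); any strip reduction for which this analysis is not forced is recorded as case (4).

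For the converse I would verify that each listed $M^4$ has a closed reduction. Case (4) and case (2) are closed because, outside a compact set, $\Sigma^2$ coincides with (a small perturbation of) a strip, whose boundary points --- at infinity and in the core alike --- are all included by Proposition \ref{PropBdStruct2}. In cases (1), (3), (5), finite topological type gives finitely many polar submanifolds, and I would check each satisfies one of the two decay criteria of Theorem \ref{ThmClosed}: for flat $\mathbb{C}^2$ and asymptotically ALE manifolds the Killing field on a polar submanifold is non-decaying or decays no faster than $|r|^{-1}$; for the Taub-NUT and generalized Taub-NUT models the chirality bounds the Killing-field decay from below by a multiple of $|r|^{-1}$ (or the Gaussian curvature is bounded below by $-2/r^2$), and these bounds survive the decaying perturbation of Definition \ref{DefModeled}; each such polar submanifold also carries an included terminus (a vertex of the cone-like or parallel-ray polygon). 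Theorem \ref{ThmClosed} then gives that every boundary component is closed with at most one included terminal point, hence $\Sigma^2$ is closed.

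The main obstacle I expect is the strip case: the scalar-flat equation on an infinite strip is far less rigid than on the other reductions --- which is presumably why it is only partially resolved here --- so forcing the $\mathbb{R}^2\times\mathbb{S}^2$ model under finite topological type, and in particular ruling out exotic collapsing along the unbounded direction, is delicate. A secondary difficulty is the order-$1$ matching in case (1): extracting the precise chirality $k$ from the \cite{Web1} metrics requires a careful asymptotic expansion and a verification that the error genuinely decays like $\tilde\rho^{-1}$ rather than merely $o(1)$. A minor recurring technical point is checking that the decaying perturbation in Definition \ref{DefModeled} cannot turn an included boundary edge into a non-included one near the compact core.
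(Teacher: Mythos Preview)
Your overall architecture --- classify closed convex $\Sigma^2$ into the five non-compact types plus compact, then treat each type --- matches the paper's. But the correspondence you set up between polygon shape and asymptotic model is wrong, and this is a genuine gap that makes the forward direction fail. You assign the general (non-parallel-ray) polygon to the ALE class only, the parallel-ray polygon to the generalized Taub-NUT family (ALF, ALF-like, exceptional), and the strip to the $\mathbb{R}^2\times\mathbb{S}^2$ model. The paper's actual correspondence is: the \emph{general} polygon (non-parallel rays) produces \emph{all four} sub-cases of (1) --- ALE, ALF, ALF-like, exceptional --- with the distinction coming not from the polygon shape but from the two free parameters $\alpha,\beta\ge0$ in the explicit momentum functions of \cite{Web1} (flat model when $\alpha=\beta=0$, classical Taub-NUT when $\alpha=\beta>0$, chiral when $\alpha\ne\beta$ both positive, exceptional when exactly one vanishes); the \emph{parallel-ray} polygon is what gives case (2), the $\mathbb{R}^2\times\mathbb{S}^2$ models, to order $0$ only; and the strip is simply recorded as case (4) with no asymptotic statement at all. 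So your plan to ``read the chirality $k$ off the asymptotic opening data of the two rays'' cannot work: a single general polygon supports an entire one- or two-parameter family of scalar-flat metrics with different chiralities, and parallel-ray polygons are not Taub-NUT-like at all.

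A second, smaller gap is the mechanism for the order-$1$ (resp.\ order-$0$) estimates. The paper does not work in action--angle coordinates directly; it writes both $g_\Sigma$ and the model $\tilde g_\Sigma$ in the volumetric normal coordinates $(x,y)$ on the upper half-plane, where $g_\Sigma=y^{-1}\det(A)(dx^2+dy^2)$ with $A=\partial(\varphi^1,\varphi^2)/\partial(x,y)$, and reduces the comparison to estimating $|1-\det A/\det\widetilde A|$ via binomial expansion of the explicit momentum functions. Without this reduction it is not clear how you would actually extract the error term. For the converse, the paper shows the model Killing fields on unbounded polar submanifolds are \emph{bounded away from zero} (not merely $\gtrsim r^{-1}$), transfers this to $M^4$ by the equivariance of $\pi$, and then invokes Theorem~\ref{ThmClosed}; your version via Proposition~\ref{PropBdStruct2} for case (2) is too vague to go through. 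Finally, the compact case is dispatched much more simply than your $c_1$ argument: $\sqrt{\mathcal V}$ is harmonic on $\Sigma^2$, vanishes on $\partial\Sigma^2$, hence vanishes identically by the maximum principle, contradicting that $\{\mathcal X^1,\mathcal X^2\}$ has rank $2$ somewhere.
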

	To define the terms in Theorem \ref{ThmSFClassification}, a metric is ``asymptotically ALE'' if it is modeled on Euclidean space in the sense of Definition \ref{DefModeled} (this differs from the standard definition of ``ALE'' in that we do not require a curvature decay condition).
	The ``asymptotically ALF'' metrics are modeled on the Taub-NUT metric of the classic literature \cite{Haw77}.
	The ``asymptotically ALF-like'' metrics are modeled on the family of generalized Taub-NUT metrics on $\mathbb{C}^2$ discovered by Donaldson \cite{Do2}; in the language of \cite{Web3} they have ``chirality'' $k$ in $(-1,0)\cup(0,1)$.
	They have quadratic curvature decay $|\Riem|=O(r^{-2})$, bounded $L^2$ curvature energy, cubic volume growth, and a Killing field that is asymptotically bounded.
	They are not ALF, except in the chirality 0 case, which is the classic Taub-NUT.
	The ``asympotically exceptional'' metrics are modeled on the exceptional Taub-NUTs of \cite{Web3}, which have maximum chirality $k=-1$ or $k=1$ and are complete scalar-flat K\"ahler metrics on $\mathbb{C}^2$.
	However curvature does not decay, volume growth is quartic, and $L^2(|\text{Ric}|)=\infty$.
	The aymptotic models of case (1) metrics were studied carefully in \cite{Web3}, and the metrics themselves were explicitly written down in \cite{Web1}.	
	
	The case (2) metrics, which all have polygon reductions with parallel rays, were all written down and classified in \cite{Web1}.
	This being the case, to date the geometries of their asymptotic models not been studied (although in Section \ref{SecAsymptoticAnalysis} we characterize some aspects of their geometries).
	The models themselves are given explicitly in (\ref{EqnSRCompMomentums}) of Section \ref{SubSecRS}.
	At present we only mention that the family of models all have half-strip reductions with three edge and parallel rays, and are the simplest possible of the case (2) metrics; see Figure~\ref{FigRSModel}.
	
	Case (3) consists of just one non-trivial metric; see \cite{Web1}.
	Case (4), the closed strip, is the only case where the metrics have not been classified.
	In case (5), the flat case, is any metric whose reduction has no edges, and any metric with compact reduction (which in the scalar-flat toric case are always flat).
	Also in case (5) is flat $\mathbb{C}^2$ with one rotational and one translational field, which has a half-plane reduction, and flat $\mathbb{C}^2$ with two rotational fields, which has a quarter-plane reduction.
	
	In Section \ref{SecExamples} we give some examples.
	
	\textbf{Remark.}
	This paper is a companion to \cite{Web2} and \cite{Web1}.
	Using analytic results of \cite{Web2}, the paper \cite{Web1} classified the possible metrics on scalar-flat toric K\"ahler 4-manifolds whose reductions are closed, with the exception of the strip and plane (cases (4) and (5)).
	That paper begins with hypotheses on $\Sigma^2$ rather than $M^4$, the main extrinsic assumption being ``Hypothesis A,'' that the reduction $\Sigma^2$ is closed.
	In this paper we have laid out what intrinsic conditions on the parent manifold $M^4$ are equivalent to the extrinsic ``Hypothesis A'' of that paper.

	%
	%
	%
	%
	%
	%
	%
	%
	%
	%
	\section{Preliminaries and Notation} \label{SecPrelims}
	
	This section recalls the theory of scalar-flat toric K\"ahler manifolds, and sets notation for the paper.
	See \cite{G} \cite{Ab1} \cite{Do1} \cite{Do2} \cite{AS} \cite{CDG} and references therein.
	The companion paper \cite{Web1} has similar notation.
	Section \ref{SubSecConfChangeOfMetric} has a new result, but otherwise this section's material is well-known so we are brief.

	%
	%
	%
	%
	%
	%
	\subsection{Properties of the reduction} \label{SubSecRedProps}
	
	From $J\nabla\varphi^i=\frac{\partial}{\partial\theta_i}$, in action-angle coordinates $(\varphi^1,\varphi^2,\theta_1,\theta_2)$ we can write
	\begin{equation}
		g\;=\;\left(\begin{array}{c|c}
			G_{ij} & 0 \\
			\hline
			0 & G^{ij}
		\end{array}\right), 
		\;
		J\;=\;\left(\begin{array}{c|c}
			0 & -G^{ij} \\
			\hline
			G_{ij} & 0
		\end{array}\right), 
		\;
		\omega\;=\;\left(\begin{array}{c|c}
			0 & -Id \\
			\hline
			Id\; & 0
		\end{array}\right). \label{EqnsGJOmegaM}
	\end{equation}
	where $G_{ij}$ and its inverse $G^{ij}$ are the $2\times2$ matrices $G_{ij}=\big<\frac{\partial}{\partial\varphi^i},\frac{\partial}{\partial\varphi^j}\big>\;=\;\left<\nabla\theta_i,\nabla\theta_j\right>$, $G^{ij}=\left<\nabla\varphi^i,\nabla\varphi^j\right>=\big<\frac{\partial}{\partial\theta_i},\frac{\partial}{\partial\theta_j}\big>$.
	We have $\mathcal{X}^1=\frac{\partial}{\partial\theta_1}$, $\mathcal{X}^2=\frac{\partial}{\partial\theta_2}$, and the generating functions $\theta_1$, $\theta_2$ of the $\mathcal{X}^1$-$\mathcal{X}^2$ action are pluriharmonic, meaning $d(Jd\theta_i)=0$; see Lemma 2.1 of \cite{Web1}.
	Therefore conjugate pluriharmonic functions exist, that we denote $\xi_1$, $\xi_2$, given by $d\xi_i=Jd\theta_i$.
	This gives a holomorphic chart $z_1=\xi_1+\sqrt{-1}\theta_1$, $z_2=\xi_2+\sqrt{-1}\theta_2$, with a coordinate frame and coframe given by
	\begin{eqnarray}
		\begin{aligned}
			\frac{\partial}{\partial{z}_1}
			&\;=\;\frac12\left(\nabla\varphi^1-\sqrt{-1}\,\mathcal{X}^1\right), \quad
			\frac{\partial}{\partial{z}_2}
			\;=\;\frac12\left(\nabla\varphi^2-\sqrt{-1}\,\mathcal{X}^2\right) \\
			dz_1&\;=\;d\xi_1 +\sqrt{-1}\,d\theta_1, \quad\quad\quad\;\;
			dz_2\;=\;d\xi_2+\sqrt{-1}\,d\theta_2.
		\end{aligned} \label{EqnCxCoords}
	\end{eqnarray}
	In this holomorphic frame the Hermitian metric is $h^{i\bar\jmath}=\big<\frac{\partial}{\partial{}z_i},\frac{\partial}{\partial{}\bar{z}_j}\big>=\frac12G^{ij}$.
	Because the $\xi_i$ are $\mathcal{X}^i$-invariant, we have $\xi_i=\xi_i(\varphi^1,\varphi^2)$.
	Consequently the distributions $\{\nabla\varphi^1,\nabla\varphi^2\}$ and $\{\nabla\xi_1,\nabla\xi_2\}$ coincide, and both are integrable as $[\nabla\varphi^i,\nabla\varphi^j]=0$.
	
	Because the reduction $\Phi(\varphi^1,\theta_1,\varphi^2,\theta_2)=(\varphi^1,\varphi^2)$ is $\mathcal{X}^1$-$\mathcal{X}^2$ invariant, this is generically a Riemannian submersion, and we have a metric and complex structure at interior points of $\Sigma^2$.
	Abbreviating $\mathcal{V}=det(G^{ij})=|\nabla\varphi^1|^2|\nabla\varphi^2|^2-\big<\nabla\varphi^1,\nabla\varphi^2\big>{}^2$,
	\begin{equation}
		g_\Sigma=(G_{ij}) \quad\text{and}\quad
		J_\Sigma
		\;=\;
		\mathcal{V}^{-\frac12}\left(\begin{array}{cc}
			\left<\mathcal{X}{}^1,\,\mathcal{X}{}^2\right> & -|\mathcal{X}{}^1|^2 \\
			|\mathcal{X}{}^2|^2 & -\left<\mathcal{X}{}^1,\,\mathcal{X}{}^2\right>
		\end{array}\right).
	\end{equation}
	The metric $g_\Sigma$ is inherited from the parent manifold $(M^4,g)$, but the complex structure $J_\Sigma$ is not.
	Rather, it is the (dualization of) the Hodge-* on $(\Sigma^2,g_\Sigma)$, and is obtained from $J_{\Sigma}{}^i_j=(det\,g_{ij})g_{ik}\epsilon^{jk}$ where $\epsilon$ is the Levi-Civita symbol.
	Citing Proposition 2.5 of \cite{Web1}, we have degenerate-elliptic equations for $\varphi^1$, $\varphi^2$
	\begin{equation}
		d(\mathcal{V}^{-\frac12}J_\Sigma{d}\varphi^k)=0. \label{EqnEllipticVar}
	\end{equation}
	From Proposition 2.2 of \cite{Web1}, the Ricci and scalar curvatures on $M^4$ are
	\begin{eqnarray}
		\begin{aligned}
			\rho\;=\;-\sqrt{-1}\partial\bar\partial\log\,\mathcal{V},
			\quad\quad
			s\;=\;-\triangle\log\,\mathcal{V}.
		\end{aligned} \label{EqnsRicScal}
	\end{eqnarray}
	The scalar curvature $s$ on $M^4$ can be computed on $\Sigma^2$, where
	\begin{equation}
		\triangle_{\Sigma}\mathcal{V}^{\frac12}+\frac12s\mathcal{V}^{\frac12}\;=\;0. \label{EqnScalarCurvatureEquation}
	\end{equation}
	This is equivalent to the 4th-order Abreu equation for scalar curvature $s=-\frac12\frac{\partial^2G^{ij}}{\partial\varphi^i\varphi^j}$ (see Eq.~(10) of \cite{Ab1}), after a Trudinger-Wang style reduction \cite{TW02}.
	See for example Corollary 2.4 of \cite{Web1}.
	Therefore when the $M^4$ scalar curvature is zero there exists a natural analytic coordinate $z=x+\sqrt{-1}y$ on $\Sigma^2$ given by $y=\sqrt{\mathcal{V}}$, and $x$ is an harmonic conjugate of $y$.
	Because $y=\sqrt{\mathcal{V}}$ is a volume, we call the isothermal system $(x,y)$ the \textit{volumetric normal coordinates} and the analytic coordinate $z=x+\sqrt{-1}y$ the \textit{volumetric normal function}.
	From Proposition 1.1 of \cite{Web1}, when $\Sigma^2$ is closed and has connected boundary, the map to the upper half-space
	\begin{equation}
		z:\Sigma^2\rightarrow\{y\ge0\}
	\end{equation}
	is a biholomorphism with $\partial\Sigma^2$ mapping bijectively onto the half-space boundary $\{y=0\}$.
	In \cite{Web3}, the Ricci form $\rho$ and the Weyl tensor on $M^4$ were both computed from data on its reduction $\Sigma^2$.
	For example letting $K_\Sigma$ be the Gaussian curvature on $(\Sigma^2,g_\Sigma)$, Proposition A.2 of \cite{Web3} states $|W^-|^2=24(K_\Sigma)^2$.

	%
	%
	%
	%
	%
	%
	\subsection{The K\"ahler and symplectic potential} \label{SubSecPotentials}
	We describe the so-called K\"ahler and symplectic potentials on $M^4$.
	Let $P^2\subset{}M^4$ be the union of the polar submanifolds, so in particular $\Phi:P^2\rightarrow\Sigma^2$ maps $P$ onto $(\partial\Sigma^2)_I$ and $\Phi:M^4\setminus{}P^2\rightarrow\Sigma^2$ is a Riemannian submersion onto the interior of $\Sigma^2$.
	Any leaf of the $\{\nabla\varphi^1,\nabla\varphi^2\}$-distribution maps bijectively onto $\Sigma^2$, and $M^4\setminus{}P^2$ is diffeomorphic to the product of $Int(\Sigma^2)$ with any of the $\mathcal{X}^1$-$\mathcal{X}^2$ leafs.
	Because $Int(\Sigma^2)$ is contractable, we can write $\omega=\sqrt{-1}\partial\bar\partial{F}$ for some smooth pseudoconvex function ${F}={F}(\xi^1,\xi^2)$ called the \textit{K\"ahler potential} of $g$.
	Because ${F}$ is pseudoconvex and the distribution $\{\nabla\xi^1,\nabla\xi^2\}$ is Legendrian---meaning $\omega(\nabla\xi^i,\nabla\xi^j)=0$ for $i,j=1,2$---we have that ${F}$ is a \emph{convex} function of $\xi^1,\xi^2$.
	From $d\varphi^i=\omega\big(\frac{\partial}{\partial\theta_i},\,\big)$ and $d\xi_i=Jd\theta_i$,
	\begin{equation}
		\varphi^1\;=\;\frac{\partial{F}}{\partial\xi_1}, \quad
		\varphi^2\;=\;\frac{\partial{F}}{\partial\xi_2},
	\end{equation}
	see for example Eq. (4.4) of \cite{G}.
	These are precisely the hodographs usually seen with the Legendre transform; indeed because $F=F(\xi_1,\xi_2)$ is convex it has a Legendre transform $G=G(\varphi^1,\varphi^2)$, called the manifold's \textit{symplectic potential}.
	Referring to the $2\times2$ matrix $G_{ij}$ above, it turns out that $G_{ij}=\frac{\partial^2{G}}{\partial\varphi^i\partial\varphi^j}$.
	See \cite{G} for a justification of this fact.
	We remark that $G(\varphi^1,\varphi^2)$ is a smooth convex function on $M^4\setminus{}P^2$, and passes to a smooth convex function on $Int(\Sigma^2)$.

	%
	%
	%
	%
	%
	%
	\subsection{A conformal change of the metric} \label{SubSecConfChangeOfMetric}
	
	We compute the Gaussian curvature $K_\Sigma$ of $\Sigma^2$ and create a conformal-change formula (\ref{EqnConformalScalar}) that will be used in Section \ref{SectionNoEdges}.
	We use ``$s_\Sigma$'' to indicate the scalar curvature of $(\Sigma^2,g_\Sigma)$, and ``$s$'' to indicate the scalar curvature on the parent 4-manifold $(M^4,g)$, which is $\mathcal{X}^1$-$\mathcal{X}^2$ invariant so passes to a function on $\Sigma^2$.
	Of course $K_\Sigma=\frac12s_\Sigma$.
	Due to the fact that $G_{ij}$ is a matrix of second partial derivatives, the metric on $g_\Sigma=(G_{ij})$ on $\Sigma^2$ obeys
	\begin{eqnarray}
		\frac{\partial{G}_{ij}}{\partial\varphi^k}\;=\;\frac{\partial{G}_{ik}}{\partial\varphi^j}. \label{EqnPseudoKahler}
	\end{eqnarray}
	Using $\mathcal{V}=\text{det}(G^{ij})=|\nabla\varphi^1|^2|\nabla\varphi^2|^2-\big<\nabla\varphi^1,\nabla\varphi^2\big>{}^2$, the Christoffel symbols are
	\begin{eqnarray}
		\begin{aligned}
			\Gamma_{ij}^k\;=\;\frac12\frac{\partial{G}_{ij}}{\partial\varphi^s}G^{sk}, \quad\quad
			\Gamma^k
			\;\stackrel{\textit{def}}{=}\;G^{ij}\Gamma_{ij}^k
			\;=\;-G^{ks}\frac{\partial}{\partial\varphi^s}\log\mathcal{V}^{\frac12}.
		\end{aligned} \label{EqnChristoSymbs}
	\end{eqnarray}
	The usual formula for scalar curvature in terms of Christoffel symbols is
	\begin{eqnarray}
		s_{\Sigma}\;=\;
		G^{ij}\frac{\partial\Gamma_{ij}^s}{\partial\varphi^s}
		-G^{ij}\frac{\partial\Gamma_{sj}^s}{\partial\varphi^i}
		+G_{st}\Gamma^s\Gamma^t
		\,-\,G^{is}G^{jt}G_{kl}\Gamma_{ij}^k\Gamma_{st}^l.
	\end{eqnarray}
	From (\ref{EqnPseudoKahler}) and (\ref{EqnChristoSymbs}) we find that $\frac{\partial}{\partial\varphi^s}\left(G^{ij}\Gamma_{ij}^s\right)-\frac{\partial}{\partial\varphi^i}\left(G^{ij}\Gamma_{sj}^s\right)=0$, so
	\begin{eqnarray}
		\begin{aligned}
			s_{\Sigma}
			&\;=\;
			\frac{\partial}{\partial\varphi^s}\left(G^{ij}\Gamma_{ij}^s\right)-\frac{\partial}{\partial\varphi^i}\left(G^{ij}\Gamma_{sj}^s\right)
			+G^{is}G^{jt}G_{kl}\Gamma_{ij}^k\Gamma_{st}^l-G_{st}\Gamma^s\Gamma^t \\
			&\;=\;
			G^{is}G^{jt}G_{kl}\Gamma_{ij}^k\Gamma_{st}^l-G_{st}\Gamma^s\Gamma^t \\
			&\;=\;
			\left|\Gamma_{ij}^k\right|^2\,-\,\left|\Gamma^k\right|^2
			\;=\;
			\left|\Gamma_{ij}^k\right|^2\,-\,\big|\nabla\log\mathcal{V}^{\frac12}\big|^2.
		\end{aligned} \label{EqnCoordSigmaScalarComp}
	\end{eqnarray}
	Now modify the metric by $\widetilde{g}_{\Sigma}=\mathcal{V}^{\frac12}{g}_\Sigma$.
	The usual conformal-change formula gives
	\begin{eqnarray}
		\begin{aligned}
			\widetilde{s}_\Sigma
			&\;=\;\mathcal{V}^{-\frac12}\left(s_\Sigma\,-\,\triangle_\Sigma\log\mathcal{V}^{\frac12}\right) \\
			&\;=\;\mathcal{V}^{-\frac12}\left(
			\left|\Gamma_{ij}^k\right|^2\,-\,\big|\nabla\log\mathcal{V}^{\frac12}\big|^2
			\,-\,\triangle_\Sigma\log\mathcal{V}^{\frac12}\right).
		\end{aligned} \label{EqnConfChangeScalar}
	\end{eqnarray}
	But $\triangle_\Sigma\log\mathcal{V}^{\frac12}+\big|\nabla\log\mathcal{V}^{\frac12}\big|{}^2=\mathcal{V}^{-\frac12}\triangle_\Sigma\mathcal{V}^{\frac12}$, so along with (\ref{EqnsRicScal}) we see the conformally related scalar curvature is simply
	\begin{eqnarray}
		\begin{array}{ll}
			\widetilde{s}_\Sigma
			&\;=\;\mathcal{V}^{-\frac12}\left(\left|\Gamma_{ij}^k\right|^2\,-\,\mathcal{V}^{-\frac12}\triangle_\Sigma\mathcal{V}^{\frac12}\right)
			\;=\;\mathcal{V}^{-\frac12}\left(\left|\Gamma_{ij}^k\right|^2\,+\,\frac12s\right).
		\end{array} \label{EqnConformalScalar}
	\end{eqnarray}
	In particular $s\ge0$ on $M^4$ implies $\widetilde{s}_\Sigma\ge0$.
	This is the central observation behind the proof of Theorem \ref{ThmComplete}.

	%
	%
	%
	%
	%
	%
	%
	%
	%
	%
	\section{Characteristics of the reduction}
	
	We characterize points of $\partial\Sigma^2$ and prove Propositions \ref{PropConvexity}, \ref{PropBdStruct1}, \ref{PropBdStruct2}, and \ref{PropBdStruct3}.
	
	%
	%
	%
	%
	%
	%
	
	\subsection{Structure included and non-included boundary points}
	In the $\varphi^1$-$\varphi^2$ plane it is convenient to refer to a coordinate disk around a point $p=(p^1,p^2)$ by
	\begin{equation}
		D_p(r)\;=\;\left\{\,(\varphi^1,\varphi^2)\in\mathbb{R}^2 \;\big|\; \sqrt{(\varphi^1-p^1)^2+(\varphi^2-p^2)^2}<r\,\right\}.
	\end{equation}
	We remark that $\Phi^{-1}(D_p(r))\subseteq{}M^4$ is an equivariant neighborhood of any $\tilde{p}\in\Phi^{-1}(p)\subset{}M^4$.
	Proposition \ref{PropStructIncluded} is well-known in the case of compact symplectic manifolds \cite{Delzant}, \cite{SymplMS}; we give a geometrically-flavored proof in the K\"ahler setting.
	\begin{proposition}[Structure of $\Sigma^2$ near included boundary points] \label{PropStructIncluded}
		Assume $p=(p^1,p^2)$ is a point of $(\partial\Sigma^2)_I$.
		Then on $\Phi^{-1}(p)$ the distribution $\{\mathcal{X}^1,\mathcal{X}^2\}$ has rank $0$ or $1$, and a coordinate disk $D_p(\epsilon)$ exists so that one of the following holds:
		\begin{enumerate}
			\item the distribution $\{\mathcal{X}^1,\mathcal{X}^2\}$ has rank 1, and a linear function $m(\varphi^1,\varphi^2)=\alpha(\varphi^1-p^1)+\beta(\varphi^2-p^2)$ exists so that $D_p(\epsilon)\cap\Sigma^2=D_p(\epsilon)\cap\{m\ge0\}$.
			\item the distribution $\{\mathcal{X}^1,\mathcal{X}^2\}$ has rank 0, then two linear functions $m_0(\varphi^1,\varphi^2)=\alpha_0(\varphi^1-p^1)+\beta_0(\varphi^2-p^2)$,
			$m_1(\varphi^1,\varphi^2)=\alpha_1(\varphi^1-p^1)+\beta_1(\varphi^2-p^2)$ exist so that $D_p(\epsilon)\cap\Sigma^2=D_p(\epsilon)\cap\{m_0\ge0\}\cap\{m_1\ge0\}$.
		\end{enumerate}
		In particular, $\Sigma^2\cap{}D_p(\epsilon)$ is convex.
	\end{proposition}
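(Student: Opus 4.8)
The plan is to analyze the local structure of $M^4$ near the preimage $\Phi^{-1}(p)$ using the fact that $\mathcal{X}^1$, $\mathcal{X}^2$ are commuting Killing fields whose common zero-locus is where the distribution degenerates. First I would show that the distribution $\{\mathcal{X}^1,\mathcal{X}^2\}$ cannot have rank $2$ on all of $\Phi^{-1}(p)$ when $p$ is a boundary point: if it did, then $\Phi$ would be a submersion in an equivariant neighborhood $\Phi^{-1}(D_p(\epsilon))$, and the open mapping property of submersions would force $p\in\mathrm{Int}(\Sigma^2)$, contradicting $p\in\partial\Sigma^2$. So on $\Phi^{-1}(p)$ the rank drops to $1$ or $0$ somewhere; I would then argue (using that the set where a fixed linear combination $a\mathcal{X}^1+b\mathcal{X}^2$ vanishes is a totally geodesic submanifold, hence the rank is actually constant on the connected set $\Phi^{-1}(p)$, or else pass to the relevant component) that the rank is constant equal to $1$ or $0$ on $\Phi^{-1}(p)$.

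Next, in each case I would linearize the torus/Killing action transverse to the zero-locus. In the rank-$1$ case, after a linear change of the fields we may assume $\mathcal{X}^1$ vanishes on $\Phi^{-1}(p)$ while $\mathcal{X}^2$ does not; then $Z = \{\mathcal{X}^1 = 0\}$ is a totally geodesic $J$-invariant surface containing $\Phi^{-1}(p)$, and $\mathcal{X}^1$ generates a circle action rotating the normal $\mathbb{R}^2\cong\mathbb{C}$ bundle. In Kähler normal coordinates adapted to this splitting, $\varphi^1$ restricted to a transverse disk is (to leading order) $\tfrac{\alpha}{2}\,\rho^2$ where $\rho$ is the distance to $Z$ and $\alpha$ is the rotation speed, while $\varphi^2$ is free along $Z$; hence near $p$ the image $\Sigma^2$ is exactly $\{\varphi^1 \ge p^1\}$ up to a linear change, i.e.\ a half-disk $\{m \ge 0\}$ with $m$ linear. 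In the rank-$0$ case, $\Phi^{-1}(p)$ is a single point (an isolated common zero, the "corner"), and the linearized $T^2$-action on $T_{\tilde p}M^4\cong\mathbb{C}^2$ is diagonal with weights; the two momentum functions $\varphi^1,\varphi^2$ become, to leading order, nonnegative combinations $\tfrac12(a_1|z_1|^2 + b_1|z_2|^2)$ and $\tfrac12(a_2|z_1|^2 + b_2|z_2|^2)$ of the squared coordinate moduli, so the local image is the cone $\{m_0\ge0\}\cap\{m_1\ge0\}$ cut out by the two linear functionals dual to the weight vectors.

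The main obstacle will be making the linearization rigorous in the merely-complete (non-compact, possibly non-torus) setting: I cannot invoke the compact Delzant machinery directly, so I would instead use the equivariant Darboux/slice theorem for the Killing fields together with the holomorphic chart $(z_1,z_2)$ of (\ref{EqnCxCoords}) to put the metric, $J$, and the momentum functions into their standard local normal forms near a zero of the action, controlling the higher-order terms well enough that the image $\Phi(\Phi^{-1}(D_p(\epsilon)))$ is genuinely the stated polyhedral set for small $\epsilon$ rather than merely tangent to it. Finally, convexity of $\Sigma^2\cap D_p(\epsilon)$ is immediate since a half-disk and an intersection of two half-planes with a disk are both convex.
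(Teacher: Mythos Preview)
Your outline is correct and would work, but the paper takes a more elementary route that avoids the very obstacle you flag at the end. Both arguments begin identically: rank $2$ forces $\Phi$ to be a submersion near $\tilde p$, hence $\Phi(\tilde p)$ is interior, so included boundary points have rank $0$ or $1$.

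In the rank-$1$ case you propose to put the metric and action into local normal form (equivariant Darboux/slice) and read off that the relevant momentum is $\sim\tfrac{\alpha}{2}\rho^2$ transversally. The paper instead argues directly, with no normal form: with $\mathcal{X}=\alpha\mathcal{X}^1+\beta\mathcal{X}^2$ vanishing on the totally geodesic complex surface $L^2$ and $\varphi=\alpha\varphi^1+\beta\varphi^2$, one has $\nabla\varphi=-J\mathcal{X}=0$ on $L^2$, so $L^2$ is critical for $\varphi$. To rule out a saddle, take a short segment normal to $L^2$ and flow it by $\mathcal{X}$; since $\mathcal{X}$ vanishes at the foot and is Killing, the orbit of the far endpoint is a circle and the swept region is a disk. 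On that disk $\varphi$ is $\mathcal{X}$-invariant, so its level sets are exactly the $\mathcal{X}$-orbits, i.e.\ circles; a function on a disk with compact level sets has an extremum at the center. Hence (after a sign) $L^2$ is a local minimum of $\varphi$, giving $m\ge0$. The rank-$0$ case is handled by running this argument once for each of the two complex curves through the fixed point.

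What each approach buys: the paper's trick needs no equivariant Darboux theorem, no compactness of the acting group, and no control of higher-order terms; it uses only that $\varphi$ is invariant and that the normal orbits close up. Your approach is the standard symplectic one and is more informative (it exhibits the quadratic model explicitly), but as you anticipated, justifying the normal form when $\mathcal{X}^1,\mathcal{X}^2$ are merely commuting Killing fields rather than generators of a compact torus requires first checking that the flow near the zero set is periodic---which is precisely the geometric fact the paper exploits directly.
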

	\begin{proof}
		Let $p\in{}M^4$ be any point in $M^4$ where $\{\mathcal{X}^1,\mathcal{X}^2\}$ has rank 2.
		Then the distribution $\{\nabla\varphi^1,\nabla\varphi^2\}$ also has rank 2, so therefore near $p$ the projection $\Phi:M^4\rightarrow\Sigma^2$ is a Riemannian submersion, and maps neighborhoods of $p$ to neighborhoods $\Phi(p)$.
		Thus if the rank of the distribution $\{\nabla\varphi^1,\nabla\varphi^2\}$ is $2$, then $\Phi(p)\notin{}(\partial\Sigma^2)_I$.
		Therefore $\Phi(p)\in{}(\partial\Sigma^2)_I$ implies the rank is 0 or 1.
		
		Now assume the rank of $\{\mathcal{X}^1,\mathcal{X}^2\}$ is 1; then numbers $\alpha,\beta$ exist so the Killing field $\mathcal{X}=\alpha\mathcal{X}^1+\beta\mathcal{X}^2$ vanishes at $p$.
		The zero-set $\{\mathcal{X}=0\}$ of any Killing field is a totally geodesic submanifold of even dimension.
		Because the rank of the distribution at $p$ is $1$, and because the Killing fields commute, a second non-zero Killing field exists that preserves the zero-set of $\mathcal{X}$, so the zero-set $\{\mathcal{X}=0\}$ through $p$ has dimension at least $1$.
		Recalling that the dimension of any zero-locus of any Killing field on any manifold has even dimension, the zero-set $L^2=\{\mathcal{X}=0\}$ is 2-dimensional.
		Because $\mathcal{X}$ is a holomorphic vector field also $L^2$ is a complex submanifold.
		
		Setting $\varphi=\alpha\varphi^1+\beta\varphi^2$, we have that $\nabla\varphi=-J\mathcal{X}=0$ on $L^2$; thus $L^2$ consists of critical points of the function $\varphi$.
		To see that $L^2$ is either a local max or a min of $\varphi$ (not, for example, a saddle), consider any short segment perpendicular to $L^2$, and consider its orbit via the field $\mathcal{X}$.
		Since $\mathcal{X}$ is zero on $L^2$, one endpoint of the segment will remain fixed on $L^2$ while the other endpoint creates a circle; the orbit of the segment therefore forms a disk.
		Because $\varphi$ is also $\mathcal{X}$-invariant, when restricted to this disk the level-sets of $\varphi$ are the integral curves of $\mathcal{X}$, which are therefore circles.
		Because its level-sets are compact, $\varphi$ either has a max or a min at the disk's center.
		Along $L^2$ the function $\varphi$ remains constant, so therefore the entire locus $L^2$ is a local max or min of $\varphi$.
		Replacing our choice of $\alpha$, $\beta$ with $-\alpha$, $-\beta$ if necessary, we may assume $L^2$ is a local minimum of $\varphi$.
		Finally, from $\varphi^1(p)=p^1$ and $\varphi^2(p)=p^2$, we have $\alpha(\varphi^1-p^1)+\beta(\varphi^2-p^2)\ge0$ in a neighborhood of $L^2$ with equality if and only if $(\varphi^1,\varphi^2)\in{}L^2$.
		
		In the case that $\{\mathcal{X}^1,\mathcal{X}^2\}$ has rank 0, then it is the intersection of two totally-geodesic submanifolds on which the distribution has rank $1$.
		Following the above argument twice, once for each submanifold, leads to two linear relations.
	\end{proof}
	In the case of complete K\"ahler manifolds, the reduction $\Sigma^2$ might not contain all of its boundary points.
	We show that when the parent $M^4$ is complete, these non-included points are ``points at infinity'' in a precise sense.
	\begin{proposition}[Structure of $\Sigma^2$ near non-included boundary points] \label{PropNonInclBPt}
		Assume $(M^4,J,g,\mathcal{X}^1,\mathcal{X}^2)$ is geodesically complete, and let $q\in(\partial\Sigma^2)_N$ be a non-included boundary point.
		Then if $\{p_i\}\subset{}M^4$ is any sequence of points so that $\Phi(p_i)\rightarrow{}q$ in the coordinate topology on $\mathbb{R}^2$, then $p_i$ diverges in the metric topology on $M^4$.
	\end{proposition}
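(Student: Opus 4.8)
The plan is a brief contradiction argument whose only real ingredient is that a geodesically complete manifold is a proper metric space, together with the continuity of $\Phi$.

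First I would negate the conclusion. If $q$ is a non-included boundary point for which some sequence $\{p_i\}\subset M^4$ with $\Phi(p_i)\to q$ fails to diverge, then by the definition of divergence there is a compact set $K\subset M^4$ containing $p_i$ for infinitely many $i$; passing to that subsequence, we may assume $\{p_i\}\subset K$, equivalently that the $p_i$ lie in a fixed bounded metric ball (the two are the same here since $M^4$ is complete).

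Second, I would invoke the Hopf--Rinow theorem: geodesic completeness of $(M^4,g)$ forces closed bounded subsets of $M^4$ to be compact, so after passing to a further subsequence we obtain $p_i\to p$ for some $p\in M^4$. Because $\Phi=(\varphi^1,\varphi^2)$ is assembled from the smooth functions $\varphi^1,\varphi^2$, it is continuous, hence $\Phi(p)=\lim_i\Phi(p_i)=q$, and therefore $q\in\Phi(M^4)=\Sigma^2$.

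Third, I observe that this contradicts the hypothesis $q\in(\partial\Sigma^2)_N=\partial\Sigma^2\setminus\Sigma^2$, which says precisely that $q\notin\Sigma^2$. Hence no non-divergent sequence can converge to $q$ under $\Phi$; that is, every sequence $\{p_i\}$ with $\Phi(p_i)\to q$ diverges in $M^4$, as claimed. The one step worth flagging is the use of geodesic completeness: it is exactly what prevents a bounded sequence from accumulating at a deleted boundary point, and without it the proposition fails. Beyond that there is no real obstacle, the argument resting on the elementary facts that continuous maps carry convergent sequences to convergent sequences and that Hopf--Rinow makes $M^4$ a proper metric space.
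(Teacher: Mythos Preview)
Your proof is correct and follows essentially the same contradiction argument as the paper: both show that if a subsequence of $\{p_i\}$ remained in a compact set $K\subset M^4$, continuity of $\Phi$ would force $q\in\Phi(K)\subset\Sigma^2$, contradicting $q\notin\Sigma^2$. One small remark: your appeal to Hopf--Rinow is superfluous, since once you already have the compact set $K$ you can extract a convergent subsequence directly from sequential compactness of $K$---and indeed the paper's own proof never explicitly invokes geodesic completeness at all.
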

	\begin{proof}
		First, if $K\subset{}M^4$ is any compact set, then its image $\Phi(K)$ is compact in the coordinate topology on $\mathbb{R}^2$ by continuity of $\Phi$.
		Given such a compact set $K\subset{}M^4$ we show that at most finitely many $p_i$ lie in $K$.
		If not, there is a subsequence (still denoted $p_i$) that remains within $K$ but so that $\Phi(p_i)$ converges to $p$ in the $\mathbb{R}^2$-topology.
		But then $\Phi(p_i)$ remains within the compact set $\Phi(K)\in\Sigma^2$ so a subsequence converges to some point in $\Phi(K)$.
		But this is impossible, because $\Phi(p_i)$ converges to $q\notin\Sigma^2$.
	\end{proof}

	%
	%
	%
	%
	%
	%
	\subsection{Convexity of the reduction}
	In the purely symplectic setting, if $M^4$ is non-compact it is impossible to recover the classic result of \cite{Delzant} that reductions are polygons when $M^4$ is compact.
	But if $M^4$ has a complete K\"ahler metric we recover convexity.
	The argument is based on the following very basic computation: letting $\textbf{a}=(a^1,a^2)$ and $\textbf{b}=(b^1,b^2)$ be points in the $\varphi^1$-$\varphi^2$ plane and $\gamma(t)=(1-t)\textbf{a}+t\textbf{b}$ the segment between them, then $\frac{d}{dt}=
	(b^1-a^1)\frac{\partial}{\partial\varphi^1}
	+(b^2-a^2)\frac{\partial}{\partial\varphi^2}$ and because the coefficients are constant we have
	\begin{equation}
		\frac{d^2}{dt^2}\;=\;
		(b^1-a^1)^2\frac{\partial^2}{(\partial\varphi^1)^2}
		+2(b^1-a^1)(b^2-a^2)\frac{\partial^2}{\partial\varphi^1\partial\varphi^2}
		+(b^2-a^2)^2\frac{\partial^2}{(\partial\varphi^2)^2}.
	\end{equation}
	Because $g_{ij}=\frac{\partial^2G}{\partial\varphi^i\varphi^j}$, along this segment we have
	\begin{equation}
		\small
		\begin{aligned}
			\frac{d^2G}{dt^2}
			&=
			(b^1-a^1)^2\frac{\partial^2G}{(\partial\varphi^1)^2}
			+2(b^1-a^1)(b^2-a^2)\frac{\partial^2G}{\partial\varphi^1\partial\varphi^2}
			+(b^2-a^2)^2\frac{\partial^2G}{(\partial\varphi^2)^2} \\
			&=\;g\left(\frac{d}{dt},\,\frac{d}{dt}\right)\;>\;0. \label{EqnSegmentLength}
		\end{aligned}
	\end{equation}
	\begin{proposition}[\textit{cf.} Proposition \ref{PropConvexity}] \label{PropConvex}
		Assume the metric on $(M^4,J,g,\mathcal{X}^1,\mathcal{X}^2)$ is complete.
		Then its reduction $\Sigma^2$ is convex.
	\end{proposition}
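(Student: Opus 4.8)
The plan is to show $\Sigma^2$ is convex by proving that if $\textbf{a},\textbf{b}\in\Sigma^2$ then the entire segment $\gamma(t)=(1-t)\textbf{a}+t\textbf{b}$ lies in $\Sigma^2$. The natural strategy is to lift the segment upstairs: choose preimages and try to construct a curve in $M^4$ projecting onto $\gamma$, then use geodesic completeness to guarantee it does not run off the manifold, so that $\Phi$ of the limit gives the missing interior points of $\gamma$. The already-displayed computation (\ref{EqnSegmentLength}), showing $\frac{d^2G}{dt^2}=g(\tfrac{d}{dt},\tfrac{d}{dt})>0$ along such a segment (interpreting $\tfrac{d}{dt}$ via the horizontal lift $(b^1-a^1)\nabla\varphi^1+(b^2-a^2)\nabla\varphi^2$), is the key convexity input: it says the symplectic potential $G$ is convex along $\gamma$, hence in particular the lifted horizontal curve has controlled, monotone behavior in the $\varphi$-coordinates and the segment cannot ``escape'' through a non-included boundary point in finite parameter time.

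**First I would** reduce to the interior. By Proposition \ref{PropStructIncluded} the reduction near included boundary points is locally $\{m\ge0\}$ or $\{m_1\ge0\}\cap\{m_2\ge0\}$, hence locally convex there; and Proposition \ref{PropBdStruct1}–\ref{PropNonInclBPt} tell us non-included boundary points are at infinity in $M^4$. So it suffices to take $\textbf{a},\textbf{b}\in Int(\Sigma^2)$ and show the open segment stays in $\Sigma^2$; the general case follows by a limiting argument, pushing $\textbf{a},\textbf{b}$ slightly into the interior and using that $\Sigma^2$ together with its included boundary is closed under these local convexity statements. On the open segment, the horizontal distribution $\{\nabla\varphi^1,\nabla\varphi^2\}$ has rank $2$, so $\Phi$ is a Riemannian submersion and the constant-coefficient vector field $V=(b^1-a^1)\nabla\varphi^1+(b^2-a^2)\nabla\varphi^2$ is a well-defined horizontal lift of $\tfrac{d}{dt}$ wherever we are over $Int(\Sigma^2)$. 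Since $[\nabla\varphi^1,\nabla\varphi^2]=0$, the field $V$ has complete flow on the region where it is defined (its flow is just a reparametrized joint flow of the two commuting gradient fields), and following its integral curve from a chosen preimage $\tilde{\textbf{a}}$ of $\textbf{a}$ produces a horizontal lift $\tilde\gamma(t)$ of $\gamma$ defined on a maximal subinterval $[0,T)\subseteq[0,1]$.

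**The hard part will be** ruling out $T<1$, i.e. showing the lift cannot leave the preimage of $Int(\Sigma^2)$ before reaching $t=1$. Here I use completeness of $M^4$ together with (\ref{EqnSegmentLength}): the speed $|\dot{\tilde\gamma}(t)|^2 = g(V,V) = \tfrac{d^2G}{dt^2}$ along $\gamma$, and convexity of $G$ along the segment (which is a general fact, since $G_{ij}=\partial^2 G/\partial\varphi^i\partial\varphi^j$ is positive-definite on all of $Int(\Sigma^2)$) bounds $\int_0^T |\dot{\tilde\gamma}|\,dt \le \big(\int_0^1 \tfrac{d^2G}{dt^2}\,dt\big)^{1/2} = (G'(1^-)-G'(0^+))^{1/2}$, which is finite because $G$ is smooth and convex on the segment up to but not including the endpoints — and in fact, since $\textbf{b}\in\Sigma^2$ as well, $G$ extends smoothly across $t=1$, so this total length is genuinely finite. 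Thus $\tilde\gamma$ has finite length on $[0,T)$; by completeness it converges to a point $p_\ast\in M^4$ as $t\to T$, and $\Phi(p_\ast)=\gamma(T)$. If $T<1$ then $\gamma(T)$ would have to be a boundary point of $\Sigma^2$ (the lift left $\Phi^{-1}(Int\,\Sigma^2)$), but it is an included boundary point since $p_\ast\in M^4$, and then Proposition \ref{PropStructIncluded} applied at $\gamma(T)$ shows $\Sigma^2$ near $\gamma(T)$ is a half-plane or wedge; a segment entering such a region transversally at an interior-directed angle is impossible unless the segment actually continues into $\Sigma^2$, giving a contradiction with maximality of $T$ (one checks the segment from $\textbf{a}$ to $\textbf{b}$ cannot be tangent to the supporting line in a way that exits). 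Hence $T=1$ and the whole segment lies in $\Sigma^2$.

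**I expect the genuine subtlety** to be the bookkeeping at $t=T$ when the lift meets the preimage of an included boundary point: one must argue that arriving at a rank-$1$ or rank-$0$ locus with nonzero horizontal velocity forces the downstairs segment to be heading \emph{into} $\Sigma^2$, not out of it, which is where the local normal forms of Proposition \ref{PropStructIncluded} and the sign conventions ($m\ge 0$ on $\Sigma^2$, $L^2$ a local minimum of the momentum) do the work. Everything else — existence and completeness of the horizontal lift, finiteness of its length via the convexity identity (\ref{EqnSegmentLength}) — is routine given the structure already established.
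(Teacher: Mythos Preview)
Your core ingredients match the paper's: the identity $|\dot\gamma|^2=\frac{d^2G}{dt^2}$ from (\ref{EqnSegmentLength}), the Cauchy--Schwarz/H\"older bound on arclength, and completeness via Proposition~\ref{PropNonInclBPt}. But the way you assemble them contains a circularity. You bound $\int_0^T|\dot{\tilde\gamma}|\,dt$ by $\big(\int_0^1 \tfrac{d^2G}{dt^2}\,dt\big)^{1/2}=(G'(1)-G'(0))^{1/2}$ and declare this finite because $\textbf{a},\textbf{b}\in Int(\Sigma^2)$. The problem is that $G$ is only defined on $Int(\Sigma^2)$, so $\int_0^1 G''\,dt$ makes sense only if the \emph{entire} segment $\gamma([0,1])$ already lies in $Int(\Sigma^2)$---precisely the conclusion you are trying to reach. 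If $T<1$ you only have $\int_0^T G''\,dt=G'(T^-)-G'(0)$, and $G'(T^-)$ can blow up as the segment approaches $\partial\Sigma^2$ (for example near an included edge $\{m=0\}$ one has $G\sim\frac12 m\log m$). Your sentence ``$G$ is smooth and convex on the segment up to but not including the endpoints'' has the difficulty backwards: the endpoints are fine by hypothesis; it is the middle of the segment whose membership in $Int(\Sigma^2)$ is unknown.

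The paper sidesteps this by reversing the logic. It first records the estimate (\ref{EqnLenEst}) \emph{only} for segments already known to lie in $\Sigma^2$, with the bound depending just on endpoint data $\frac{dG}{dt}({\boldsymbol\varphi}_0)$, $\frac{dG}{dt}({\boldsymbol\varphi}_1)$. Then it argues by contradiction: if $\Sigma^2$ were non-convex, one could vary the endpoints through $Int(\Sigma^2)$ to produce a segment lying in $\overline{\Sigma^2}$ with endpoints in $\Sigma^2$ that passes through a point of $(\partial\Sigma^2)_N$. Proposition~\ref{PropNonInclBPt} forces that segment to have infinite $g_\Sigma$-length, while (\ref{EqnLenEst}) gives a uniform finite bound from the endpoint data---contradiction. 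The length bound is thus never invoked on a segment where $G$ is undefined, and your lift-and-extend machinery plus the casework at included boundary points becomes unnecessary. (Minor aside: the horizontal lift of $d/dt$ is $(b^i-a^i)\frac{\partial}{\partial\varphi^i}$, not $(b^i-a^i)\nabla\varphi^i$; with your $V$ one gets $g(V,V)=G^{ij}(b^i-a^i)(b^j-a^j)$, not $\frac{d^2G}{dt^2}$.)
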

	\begin{proof}
		Let ${\boldsymbol\varphi}_0,{\boldsymbol\varphi}_1\in\mathbb{R}^2$ be points within $Int(\Sigma^2)$ under the condition that the line segment $\gamma(t)=(1-t){\boldsymbol\varphi}_0+t{\boldsymbol\varphi}_1$, $t\in[0,1]$ between them remains in $\Sigma^2$.
		From (\ref{EqnSegmentLength}),
		\begin{equation}
			g\left(\frac{d}{dt},\,\frac{d}{dt}\right)
			\;=\;\frac{d^2G}{dt^2}, \quad\text{and}\quad
			\left|\frac{d}{dt}\right|
			\;=\;\sqrt{\frac{d^2G}{dt^2}}. \label{EqnGConcave}
		\end{equation}
		If the segment lies within $Int(\Sigma^2)$, its length is bounded: from H\"older's inequality
		\begin{equation}
			\begin{aligned}
				Len(\gamma)
				&\;=\;\int_0^1\left|\frac{d}{dt}\right|\,dt
				\;=\;\int_0^1\left(\frac{d^2G}{dt^2}\right)^{\frac12}\,dt \\
				&\;\le\;\left(\int_0^1\frac{d^2G}{dt^2}\,dt\right)^{\frac12}
				\;=\;\left(\frac{dG}{dt}({\boldsymbol\varphi}_1)\;-\;\frac{dG}{dt}({\boldsymbol\varphi}_0)\right)^{\frac12}
			\end{aligned} \label{EqnLenEst}
		\end{equation}
		so the length of the line segment is bounded in terms of its endpoints.
		Because $Int(\Sigma^2)$ is connected we can vary ${\boldsymbol\varphi}_0$, ${\boldsymbol\varphi}_1$ throughout $Int(\Sigma^2)$.
		If $\Sigma^2$ is non-convex then there exists some point $p\in(\partial\Sigma^2)_N$ that has a supporting segment that is in the closure of $\Sigma^2$ and has endpoints in $\Sigma^2$.
		By Proposition \ref{PropNonInclBPt} the segment $\gamma$ must have infinite length.
		However (\ref{EqnLenEst}) gives a uniform upper bound on the length of $l$.
		From this contradiction we conclude that $\Sigma^2$ is convex.
	\end{proof}

	%
	%
	%
	%
	%
	%
	\subsection{Structure of interior points}
	
	\begin{lemma} \label{LemmaStructInteriorPts}
		If $p\in\Sigma^2$ then its preimage $\Phi^{-1}(p)\subset{}M^4$ is a connected submanifold with tangent bundle $\text{span}\{\mathcal{X}^1,\mathcal{X}^2\}\subset{}TM^4$ restricted to $\Phi^{-1}(p)$.
		Finally, $p\in{}Int(\Sigma^2)$ if and only if $\{\mathcal{X}^1,\mathcal{X}^2\}$ has rank $2$ on $\Phi^{-1}(p)$, if and only if a coordinate disk $D_p(\epsilon)$ exists so $\Phi:\Phi^{-1}(D_p(\epsilon))\rightarrow{}D_p(\epsilon)$ is a Riemannian submersion.
	\end{lemma}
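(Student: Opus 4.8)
The plan is to prove the three assertions in order, using the structure already established for boundary points together with the local normal forms from Propositions \ref{PropStructIncluded} and \ref{PropNonInclBPt}. First, I would establish that $\Phi^{-1}(p)$ is a submanifold with the stated tangent bundle. Fix $p\in\Sigma^2$ and let $\tilde p\in\Phi^{-1}(p)$. Since $\varphi^1,\varphi^2$ are the two defining functions of $\Phi$, the preimage $\Phi^{-1}(p)=\{\varphi^1=p^1\}\cap\{\varphi^2=p^2\}$ is the common level set; where the distribution $\{\nabla\varphi^1,\nabla\varphi^2\}$ has rank $2$ this is cut out transversally and is a $2$-dimensional submanifold. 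At points where the rank drops (rank $1$ or $0$), I would invoke Proposition \ref{PropStructIncluded}: near such a point the relevant Killing field $\mathcal{X}=\alpha\mathcal{X}^1+\beta\mathcal{X}^2$ (or a pair of them in the rank-$0$ case) vanishes, and the preimage is precisely the totally geodesic zero-locus $L^2$ (or its intersection with a second such locus, a point), which is again a smooth submanifold. In all cases the tangent space at a point is exactly the orthogonal complement of $\mathrm{span}\{\nabla\varphi^1,\nabla\varphi^2\}$, and since $\nabla\varphi^i=-J\mathcal{X}^i$ and $J$ is an isometry, that complement is $\mathrm{span}\{J\nabla\varphi^1,J\nabla\varphi^2\}=\mathrm{span}\{\mathcal{X}^1,\mathcal{X}^2\}$ (with the understanding that this span is the $0$- or $1$-dimensional degenerate span at rank-drop points, where it still equals the tangent space of $L^2$ or the point). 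I would also note that $\Phi^{-1}(p)$ is $\mathcal{X}^1$-$\mathcal{X}^2$ invariant because the $\varphi^i$ are.

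Next, connectedness. Pick any leaf $\Lambda$ of the $\{\nabla\varphi^1,\nabla\varphi^2\}$-distribution through a rank-$2$ point; the restriction $\Phi|_\Lambda$ is a local diffeomorphism onto $Int(\Sigma^2)$, and by the discussion preceding Section \ref{SubSecPotentials} (``any leaf maps bijectively onto $\Sigma^2$'') it is in fact a bijection. Thus for $p\in Int(\Sigma^2)$, $\Lambda$ meets $\Phi^{-1}(p)$ in exactly one point $\tilde p$; and since $M^4\setminus P^2$ is diffeomorphic to the product $Int(\Sigma^2)\times\Lambda$ equivariantly, $\Phi^{-1}(p)$ is the single $\mathcal{X}^1$-$\mathcal{X}^2$ orbit through $\tilde p$, which is connected (it is the continuous image of a connected abelian group action). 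For $p\in(\partial\Sigma^2)_I$, $\Phi^{-1}(p)$ equals the zero-locus $L^2$ (rank $1$) or a single point (rank $0$) from Proposition \ref{PropStructIncluded}; the zero-locus of a Killing field is connected in our setting since $M^4$ is simply connected and $L^2$ is the fixed-point set of a circle action, so it is connected as well. (If one prefers to avoid that appeal, connectedness follows because $L^2$ is a limit of orbits in $M^4\setminus P^2$, each of which is connected, and they accumulate onto all of $L^2$.)

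Finally, the equivalence. That $p\in Int(\Sigma^2)$ implies the rank is $2$ on $\Phi^{-1}(p)$ is exactly the contrapositive of the first paragraph of the proof of Proposition \ref{PropStructIncluded}, which shows rank drop forces $\Phi(p)\in(\partial\Sigma^2)_I\subset\partial\Sigma^2$. Conversely, if the rank is $2$ on $\Phi^{-1}(p)$ then $\{\nabla\varphi^1,\nabla\varphi^2\}$ has rank $2$ there, so $\Phi$ is a submersion in a neighborhood, hence an open map, so $\Phi(\Phi^{-1}(p))$ contains an open neighborhood of $p$; thus $p\in Int(\Sigma^2)$, and moreover the invariant metric descends to make $\Phi$ a Riemannian submersion on that neighborhood, giving the third clause. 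The remaining implication, that the existence of a coordinate disk $D_p(\epsilon)$ over which $\Phi$ is a Riemannian submersion forces $p\in Int(\Sigma^2)$, is immediate since a submersion is an open map. The main obstacle, and the one point I would be careful about, is the connectedness of $\Phi^{-1}(p)$ at an included boundary point of rank $1$: one must be sure that the zero-locus $L^2$ of the Killing field $\mathcal{X}$ does not have two components both lying over the same $p$, which is where simple connectivity of $M^4$ (or the accumulation-of-orbits argument) does the real work.
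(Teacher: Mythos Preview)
Your treatment of the submanifold structure and of the rank-$2$/interior/submersion equivalence is essentially the same as the paper's and is fine. The problem is the connectedness argument, which is the heart of the lemma.

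For interior points you appeal to the statement in \S\ref{SubSecPotentials} that ``any leaf maps bijectively onto $\Sigma^2$'' and that $M^4\setminus P^2$ is a product $Int(\Sigma^2)\times(\text{fiber})$. But that product decomposition is precisely the assertion that the fibers of $\Phi$ over interior points are connected; invoking it here is circular. For included boundary points you claim that the fixed-point set of a circle action on a simply connected manifold is connected. That is false: the rotation action on $\mathbb{S}^2$ has two fixed points, and more generally fixed-point sets of circle actions on simply connected $4$-manifolds routinely have several components. Your fallback ``accumulation-of-orbits'' remark is not a proof either: limits of connected sets need not be connected.

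The paper proves connectedness by a different, self-contained mechanism that you should adopt. Suppose $\Phi^{-1}(p)$ had two components $T_1,T_2$, and take a minimizing geodesic $\gamma$ between them. At the endpoints $\langle\dot\gamma,\mathcal{X}^i\rangle=0$, and from the Killing identity $\nabla_{\dot\gamma}\nabla_{\dot\gamma}\mathcal{X}=\Riem(\dot\gamma,\mathcal{X})\dot\gamma$ one gets $\frac{d^2}{ds^2}\langle\mathcal{X}^i,\dot\gamma\rangle=0$, so $\langle\dot\gamma,\mathcal{X}^i\rangle\equiv0$ along $\gamma$. Hence $\dot\gamma\in\mathrm{span}\{\nabla\varphi^1,\nabla\varphi^2\}$, the angle coordinates $(\theta_1,\theta_2)$ are constant along $\gamma$, and $\Phi\circ\gamma$ is a closed loop in $\Sigma^2$. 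Convexity of $\Sigma^2$ lets you span this loop by straight segments $\eta_s(t)=(1-t)\Phi(\gamma(0))+t\,\Phi(\gamma(s))$, and since the angle coordinates are constant along $\gamma$ these segments lift to $M^4$ by assigning the same fixed $(\theta_1,\theta_2)$. The lift of the terminal segment then lies in a single $\{\mathcal{X}^1,\mathcal{X}^2\}$-leaf, contradicting the assumption that $\gamma(0)$ and $\gamma(1)$ lie on different leaves. This argument works uniformly and does not presuppose the product structure you invoked.
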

	\begin{proof}
		Because $\mathcal{X}^1$, $\mathcal{X}^2$ are Killing, the orbit of any point $\tilde{p}\in{}M^4$ under $\mathcal{X}^1$, $\mathcal{X}^2$ is a manifold with tangent bundle being the distribution $\{\mathcal{X}^1,\mathcal{X}^2\}$ restricted to the orbit.
		Because $\Sigma^2$ is $M^4$ modulo the action of these fields, the image under $\Phi$ of the orbit of any point $\tilde{p}\in{}M^4$ is a single point in $\Sigma^2$.
		The distribution $span\{\nabla\varphi^1,\nabla\varphi^2\}$ has the same rank as $span\{\mathcal{X}^2,\mathcal{X}^2\}$ and is perpendicular to it, so the pushforward $\Phi_*$ is an isometry on $span\{\nabla\varphi^1,\nabla\varphi^2\}$.
		Therefore, because $span\{\nabla\varphi^1,\nabla\varphi^2\}$ has rank 2, $\Phi$ is a Riemannian submersion on a neighborhood of $p$.
		
		It remains to show that a point of $\Sigma^2$ has connected pre-image.
		Assume a point $p\in\Sigma^2$ has two components $T_1,T_2\subset\Phi^{-1}(\Sigma^2)$.
		Let $\gamma(s)$, $s\in[0,1]$ be a geodesic from $T_1$ to $T_2$ that represents a shortest path between these submanifolds.
		The vector $\dot\gamma$ is pependicular to $T_1$ and $T_2$ at the two points of contact: $\left<\dot\gamma(0),\mathcal{X}^i\right>=\left<\dot\gamma(1),\mathcal{X}^i\right>=0$ for both fields $\mathcal{X}^1$, $\mathcal{X}^2$.
		To show $\left<\dot\gamma(t),\mathcal{X}^i\right>=0$ for all $t$, we compute
		\begin{equation}
			\begin{aligned}
				\frac{d^2}{ds^2}\left<\mathcal{X},\dot\gamma\right>
				&=\dot\gamma\dot\gamma\left<\mathcal{X},\,\dot\gamma\right>
				=\left<\nabla_{\dot\gamma}\nabla_{\dot\gamma}\mathcal{X},\,\dot\gamma\right>
				=\left<\Riem(\dot\gamma,\mathcal{X})\dot\gamma,\,\dot\gamma\right>
				=0.
			\end{aligned}
		\end{equation}
		In particular $\dot\gamma\in{}span\{\nabla\varphi^1,\nabla\varphi^2\}$ so the path $s\mapsto\Psi(\gamma(s))$ in $\Sigma^2$ has the same length as the path $s\mapsto\gamma$ in $M^4$.
		Given any $s\in[0,1]$, draw the line segment in $\Sigma^2$
		\begin{equation}
			\eta_s(t)=(1-t)\,\Phi(\gamma(0))\,+\,t\,\Phi(\gamma(s)) \label{EqnSpanningSurface}
		\end{equation}
		from $\Phi(\gamma(0))$ to $\Phi(\gamma(s))$.
		Then $(s,t)\mapsto\eta_s(t)$ in $\Sigma^2$ is a spanning surface for the closed loop $\Phi(\gamma(s))$.
		
		We can lift each segment $t\mapsto\eta_s(t)$ to a path in $M^4$ in the following way.
		In action-angle coordinates, $\gamma(s)$ has the expression $\gamma(s)=(\varphi^1(s),\varphi^2(s),c_1,c_2)$ due to the fact that, as we have shown, the angle coordinates $(\theta_1,\theta_2)=(c_1,c_2)$ remain constant along $\gamma$.
		Then for each $s$ the path (\ref{EqnSpanningSurface}) lifts directly to $M^4$ by using the $(\varphi^1,\varphi^2)$-coordinates from its expression in $\Sigma^2$ and assigning it the $(\theta_1,\theta_2)$ coordinates $(c_1,c_2)$.
		This produces a continuous lift of $(s,t)\rightarrow\eta_s(t)$ in $\Sigma^2$ to a 2-disk $(s,t)\mapsto\tilde{\eta}_s(t)$ in $M^4$.
		
		Because the path $t\mapsto\eta_t(1)$ is a trivial path in $\Sigma^2$, therefore its lift $t\mapsto\tilde{\eta}_t(1)$ remains on a single leaf of the $\{\mathcal{X}{}^1,\mathcal{X}{}^2\}$ distribution.
		This contradicts the assumption that $\gamma(0)$ and $\gamma(1)$ are on different leaves of this distribution.
	\end{proof}

	%
	%
	%
	%
	%
	%
	%
	%
	%
	%
	
	\section{Reductions without an included boundary} \label{SectionNoEdges}
	
	We prove Theorem \ref{ThmComplete}, that if $s\ge0$ and $(\partial\Sigma^2)_I=\varnothing$ then $M^4$ is flat.
	We do so in the following stages.
	Because $\Phi:M^4\rightarrow\Sigma^2$ is now a global Riemannian submersion by Lemma \ref{LemmaStructInteriorPts}, in particular $(\Sigma^2,g_\Sigma)$ is complete.
	Recalling the quantity $\mathcal{V}$ from Section \ref{SubSecRedProps}, we first show that if $\mathcal{V}$ is constant, the metric is flat.
	By (\ref{EqnScalarCurvatureEquation}) the scalar curvature condition $s\ge0$ is equivalent to $\triangle_\Sigma\sqrt{\mathcal{V}}\le0$; we use a classical Liouville theorem to show this implies $\mathcal{V}$ is constant.
	
	But to use the classical Liouville theorem, me must determine that the complete manifold $(\Sigma^2,g_\Sigma)$ is actually biholomorphic to $\mathbb{C}$, or, equivalently, that it is simply connected and parabolic in the sense of potential theory \cite{CY}.
	To do this, we show that if $(\Sigma^2,g_\Sigma)$ is complete then the conformal change from \S\ref{SubSecConfChangeOfMetric} gives a metric $\widetilde{g}_\Sigma$ that actually remains complete, and, crucially, gives a manifold $(\Sigma^2,\widetilde{g}_\Sigma)$ with non-negative Gaussian curvature.
	Thus $\Sigma^2$ is biholomorphic with $\mathbb{C}$, by the Cheng-Yau criterion for parabolicity.
	The classical Liouville theorem then forces $\mathcal{V}$ to be constant, and we conclude that $g_\Sigma$ is a flat metric.
	\begin{lemma} \label{LemmaTConstFlat}
		Assume $(\Sigma^2,g_\Sigma)$ is geodesically complete.
		If $\mathcal{V}$ is constant, then $(\Sigma^2,g_\Sigma)$ is biholomorphic to $\mathbb{C}$, and $g_\Sigma$ is a flat metric with constant coefficients when expressed in $\varphi^1$, $\varphi^2$ coordinates.
	\end{lemma}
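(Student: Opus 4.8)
The plan is to exploit the structural identities for $g_\Sigma$ that come from $G_{ij}$ being a Hessian, together with the constancy of $\mathcal{V}=\det(G^{ij})$. First I would observe that $\mathcal{V}$ constant means $\det(G_{ij})=\mathcal{V}^{-1}$ is a positive constant, so the volumetric normal coordinate $y=\sqrt{\mathcal{V}}$ degenerates — instead the relevant fact is that the conformal class is unchanged by the conformal factor $\mathcal{V}^{1/2}$. Indeed, by (\ref{EqnConformalScalar}), $\widetilde{s}_\Sigma=\mathcal{V}^{-1/2}\big(|\Gamma_{ij}^k|^2+\tfrac12 s\big)$; but equation (\ref{EqnsRicScal}) gives $s=-\triangle\log\mathcal{V}$, which vanishes when $\mathcal{V}$ is constant, and $\Gamma^k=-G^{ks}\partial_s\log\mathcal{V}^{1/2}=0$ as well. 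So from (\ref{EqnCoordSigmaScalarComp}) we get $s_\Sigma=|\Gamma_{ij}^k|^2\ge 0$, and moreover $\widetilde{g}_\Sigma=\mathcal{V}^{1/2}g_\Sigma=\mathcal{V}^{1/2}g_\Sigma$ is just a constant rescaling of $g_\Sigma$, so nothing is gained there directly. The real point must be to show $\Gamma_{ij}^k\equiv 0$.

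The key step, then, is to deduce from $\mathcal{V}$ constant and completeness that $g_\Sigma$ is flat, i.e. $|\Gamma_{ij}^k|^2\equiv 0$. I would argue as follows. Completeness of $(\Sigma^2,g_\Sigma)$ plus $s_\Sigma\ge 0$ does not by itself force flatness, so one needs the Hessian structure. The plan is: since $G_{ij}=\partial^2 G/\partial\varphi^i\partial\varphi^j$ for a convex function $G$ on all of $Int(\Sigma^2)$ (and here $\Sigma^2$ has no included boundary, so this holds globally, and by Theorem~\ref{ThmComplete}'s setup $\Sigma^2$ is all of the image of a genuine Riemannian submersion), the gradient map $\nabla G:(\varphi^1,\varphi^2)\mapsto(\xi_1,\xi_2)$ is a diffeomorphism onto a convex domain in the dual plane, and the metric $g_\Sigma$ in the $\xi$-coordinates has components $G^{ij}$ which form the Hessian of the Legendre-dual convex function $F(\xi_1,\xi_2)$. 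The condition $\det(G^{ij})=\mathcal{V}=\text{const}$ says $F$ satisfies a Monge–Ampère equation $\det(\text{Hess}\,F)=\text{const}$. Completeness of $g_\Sigma$ should translate into the statement that $F$ is an entire solution on $\mathbb{R}^2$ (or at least on a domain large enough to apply a Liouville-type rigidity), and by Jörgens' theorem an entire convex solution of $\det\text{Hess}\,F=\text{const}$ on $\mathbb{R}^2$ is a quadratic polynomial. A quadratic $F$ means $G^{ij}$ is constant, hence $g_\Sigma$ has constant coefficients in the $\xi$-coordinates and in the $\varphi$-coordinates (its inverse), and is therefore flat, and $(\Sigma^2,g_\Sigma)$ being complete, simply connected, and flat is isometric to $\mathbb{R}^2$, hence biholomorphic to $\mathbb{C}$.

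The hard part will be two linked issues: first, justifying that completeness of $g_\Sigma$ forces the convex domain on which $F$ or $G$ is defined to be all of $\mathbb{R}^2$ (rather than a proper convex subset), so that Jörgens' theorem actually applies — this requires controlling the geometry near $\partial\Sigma^2$, using that $(\partial\Sigma^2)_I=\varnothing$ so every boundary point is non-included and hence (by Proposition~\ref{PropNonInclBPt}/\ref{PropBdStruct2}) infinitely far away, which should give completeness of the Legendre picture; second, the conclusion "biholomorphic to $\mathbb{C}$" — once $g_\Sigma$ is flat and complete on a simply connected surface (recall $M^4$, hence $\Sigma^2$, is simply connected), it is $\mathbb{R}^2$ with the standard metric up to linear change of coordinates, and a flat complete simply connected Riemann surface is conformally $\mathbb{C}$, so the volumetric normal function $z=x+\sqrt{-1}y$ is a global biholomorphism. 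I would present the argument in the order: (i) record $s=0$, $\Gamma^k=0$ when $\mathcal{V}$ is constant; (ii) pass to the Legendre/symplectic-potential picture and note $\det\text{Hess}\,G=\mathcal{V}^{-1}$ constant; (iii) show completeness forces the domain to be all of $\mathbb{R}^2$; (iv) invoke Jörgens to conclude $G$ is quadratic, hence $G_{ij}$ constant and $g_\Sigma$ flat; (v) conclude $(\Sigma^2,g_\Sigma)\cong(\mathbb{R}^2,\text{flat})$, biholomorphic to $\mathbb{C}$. The main obstacle is step (iii): transferring geodesic completeness of $g_\Sigma$ through the Legendre transform into a statement about the domain of definition of the potential.
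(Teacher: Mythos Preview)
Your approach via J\"orgens' theorem is genuinely different from the paper's and is natural, but step (iii) is a real gap that your suggested fix does not close. Proposition~\ref{PropNonInclBPt} tells you only that non-included boundary points of $\Sigma^2$ lie at infinite $g_\Sigma$-distance, which is precisely the completeness you already assumed; it does \emph{not} tell you that $\Sigma^2$ (or the Legendre-dual domain) is all of $\mathbb{R}^2$. The length estimate (\ref{EqnLenEst}) does show that approaching a \emph{finite} boundary point forces $|\nabla G|\to\infty$, but it gives no control when $|\boldsymbol\varphi|\to\infty$ inside $\Sigma^2$, so properness of $\nabla G$ is not immediate. What you actually need is the Calabi--Cheng--Yau theorem that a convex function with constant Monge--Amp\`ere determinant and \emph{complete} Hessian metric is a quadratic polynomial (equivalently: a complete improper affine hypersphere is an elliptic paraboloid). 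Citing that subsumes your steps (iii) and (iv) in one stroke, but it is a substantial external theorem, and your sketch does not invoke it.

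The paper avoids this difficulty entirely by a complex-analytic route. From $\mathcal{V}$ constant, equation (\ref{EqnEllipticVar}) makes each $\varphi^k$ harmonic on $(\Sigma^2,g_\Sigma)$, so $\varphi^1$ and $\varphi^2$ each extend to holomorphic coordinates $z,w$; the transition derivative $dw/dz$ has imaginary part proportional to $|\nabla\varphi^1|^{-2}$, which is therefore harmonic. One then computes $K_\Sigma=4\,\big|\nabla|\nabla\varphi^1|\big|^2\ge0$, and completeness plus $K_\Sigma\ge0$ gives parabolicity via the Cheng--Yau volume criterion, hence $(\Sigma^2,J_\Sigma)\cong\mathbb{C}$. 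Now $|\nabla\varphi^1|^{-2}$ is a positive harmonic function on $\mathbb{C}$, so Liouville forces it constant, whence $K_\Sigma=0$ and all metric coefficients are constant. Note the order is reversed from yours: the paper obtains ``biholomorphic to $\mathbb{C}$'' \emph{before} flatness, using only $K_\Sigma\ge0$ --- exactly the inequality you derived in your first paragraph and then set aside as insufficient. It is insufficient \emph{by itself}, but combined with the harmonicity of the metric coefficient it closes the argument with nothing deeper than the classical Liouville theorem.
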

	\begin{proof}
		With $\mathcal{V}$ constant, then $d(\mathcal{V}^{-\frac12}J_\Sigma{d}\varphi^k)=0$ from (\ref{EqnEllipticVar}) gives $d(J_\Sigma{}d\varphi^k)=0$, so $\varphi^1$, $\varphi^2$ are harmonic on $\Sigma^2$.
		Therefore each determines an analytic function which we can write $z=\varphi^1+\sqrt{-1}\eta^1$, $w=\varphi^2+\sqrt{-1}\eta^2$ (where $d\eta^k=J_\Sigma{}d\varphi^k$).
		Away from possible critical points these are each a holomorphic coordinate on $\Sigma^2$, and 
		\begin{eqnarray}
			\begin{aligned}
				\frac{d}{dz}&\;=\;\frac{1}{2|\nabla\varphi^1|^2}\left(\nabla\varphi^1\,+\,\sqrt{-1}J_\Sigma\nabla\varphi^1\right) \\
				\frac{d}{dw}&\;=\;\frac{1}{2|\nabla\varphi^2|^2}\left(\nabla\varphi^2\,+\,\sqrt{-1}J_\Sigma\nabla\varphi^2\right)
			\end{aligned}
		\end{eqnarray}
		and we easily compute the transition function:
		\begin{eqnarray}
			\frac{dw}{dz}
			\;=\;g\left(\frac{d}{dz},\,\overline{\nabla}{w}\right)
			\;=\;\frac{\left<\nabla\varphi^1,\,\nabla\varphi^2\right>}{|\nabla\varphi^1|^2}\,-\,\sqrt{-1}\frac{\sqrt\mathcal{V}}{|\nabla\varphi^1|^2}.
		\end{eqnarray}
		But $\frac{dw}{dz}$ is holomorphic, so its imaginary part is harmonic, so $|\nabla\varphi^1|^{-2}$ is harmonic.
		In the $z$-coordinate the Hermitian metric is $h_\Sigma=\left|\frac{d}{dz}\right|^2=\frac12|\nabla\varphi^1|^{-2}$, so $h_\Sigma$ is an harmonic function.
		Using $\triangle_\Sigma{}h_\Sigma=0$, the Gaussian curvature is
		\begin{eqnarray}
			K_\Sigma
			\;=\;-\frac12h_\Sigma^{-1}\triangle_\Sigma\log{h}_\Sigma\;=\;4|\nabla|\nabla\varphi^1||^2 \label{EqnSurfaceK}
		\end{eqnarray}
		which is non-negative, forcing the complete manifold $(\Sigma^2,g_\Sigma)$ to be parabolic---this is due to the Cheng-Yau condition for parabolicity; see \cite{CY} or the remark below.
		The region $\Sigma^2\subset\mathbb{R}^2$ is simply connected due to the fact that it is convex by Proposition \ref{PropConvex}.
		Consequently $(\Sigma^2,J_{\Sigma})$ is biholomorphic to $\mathbb{C}$.
		But then the classic Liouville theorem implies $|\nabla\varphi^1|^{-2}$ is actually constant, because it is an harmonic function on $\mathbb{C}$ bounded from below.
		In particular, (\ref{EqnSurfaceK}) now gives $K_\Sigma=0$.
		
		Similarly $|\nabla\varphi^2|^{-2}$ is constant.
		The fact that $\mathcal{V}$ is constant and $\mathcal{V}=|\nabla\varphi^1|^2|\nabla\varphi^2|^2-\left<\nabla\varphi^1,\nabla\varphi^2\right>{}^2$ means $\left<\nabla\varphi^1,\nabla\varphi^2\right>$ is constant.
		Because $g_\Sigma{}^{ij}=\left<\nabla\varphi^i,\nabla\varphi^j\right>$, all components of the metric are constants when measured in $(\varphi^1,\varphi^2)$ coordinates.
	\end{proof}
	\begin{lemma} \label{LemmaBiholoToCFlat}
		Assume $(\Sigma^2,g_\Sigma)$ is geodesically complete, $(\Sigma^2,J_\Sigma)$ is biholomorphic to $\mathbb{C}$, and $\triangle_\Sigma\sqrt{\mathcal{V}}\le0$.
		Then $(\Sigma^2,g_\Sigma)$ is flat and the metric $g_\Sigma$ has constant coefficients when expressed in $\varphi^1$, $\varphi^2$ coordinates.
	\end{lemma}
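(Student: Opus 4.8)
The plan is to show that $\mathcal{V}$ is constant and then simply quote Lemma \ref{LemmaTConstFlat}; the only real content lies in the first step, a Liouville-type argument.

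First I would note that $\sqrt{\mathcal{V}}$ is nonnegative --- since $\mathcal{V}=\det(G^{ij})\ge0$ --- and, by hypothesis, $g_\Sigma$-superharmonic: $\triangle_\Sigma\sqrt{\mathcal{V}}\le0$. Because $(\Sigma^2,J_\Sigma)$ is biholomorphic to $\mathbb{C}$ there is a biholomorphism $\Psi:\Sigma^2\to\mathbb{C}$, and the pushforward metric $\Psi_*g_\Sigma$ is conformal to the flat metric on $\mathbb{C}$. In real dimension two a conformal change $\tilde g=e^{2f}g$ rescales the Laplacian by the positive factor $e^{-2f}$, so superharmonicity is preserved; hence $\sqrt{\mathcal{V}}\circ\Psi^{-1}$ is a superharmonic function on the standard plane that is bounded below (by $0$). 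The classical Liouville theorem for superharmonic functions --- the superharmonic analogue of the Liouville statement already used in the proof of Lemma \ref{LemmaTConstFlat}, equivalently the parabolicity of $\mathbb{C}$ in the sense of \cite{CY} --- then forces $\sqrt{\mathcal{V}}\circ\Psi^{-1}$, and therefore $\mathcal{V}$, to be constant on $\Sigma^2$.

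With $\mathcal{V}$ constant and $(\Sigma^2,g_\Sigma)$ geodesically complete, Lemma \ref{LemmaTConstFlat} gives at once that $g_\Sigma$ is flat and has constant coefficients in the $\varphi^1,\varphi^2$ coordinates, which is the assertion. The one point that warrants care is the Liouville input: for superharmonic (as opposed to harmonic) functions the hypothesis that forces constancy is boundedness from below rather than two-sided boundedness, and this is exactly why the nonnegativity $\mathcal{V}\ge0$ matters. Otherwise the argument is bookkeeping: completeness is used only to invoke Lemma \ref{LemmaTConstFlat}, and the biholomorphism hypothesis only to identify the conformal type of $(\Sigma^2,g_\Sigma)$ with that of $\mathbb{C}$.
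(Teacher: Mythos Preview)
Your proposal is correct and follows essentially the same route as the paper: use the Liouville theorem to conclude that the nonnegative superharmonic function $\sqrt{\mathcal{V}}$ is constant on $\mathbb{C}$, then invoke Lemma~\ref{LemmaTConstFlat}. The only difference is that you make explicit the conformal invariance of superharmonicity in real dimension two (via $\triangle_{e^{2f}g}=e^{-2f}\triangle_g$), which the paper leaves implicit when it simply asserts that $\mathcal{V}^{1/2}$ is ``superharmonic \dots\ on $\mathbb{C}$.''
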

	\begin{proof}
		Since the function $\mathcal{V}^{\frac12}$ is superharmonic and bounded from below on $\mathbb{C}$, it is constant.
		Lemma \ref{LemmaTConstFlat} now provides the result.
	\end{proof}
	Lemmas \ref{LemmaTConstFlat} and \ref{LemmaBiholoToCFlat} together show $\mathcal{V}=const$ if and only if $(\Sigma^2,J_\Sigma)$ is biholomorphic to $\mathbb{C}$.
	Before this equivalence can be made useful, we require a fact about the conformal change discussed in \S\ref{SubSecConfChangeOfMetric}.
	\begin{lemma} \label{LemmaConformalComplete}
		Assume $(\Sigma^2,g_\Sigma)$ is geodesically complete and $\triangle_\Sigma\sqrt{\mathcal{V}}\le0$.
		Setting $\widetilde{g}_\Sigma=\mathcal{V}^{\frac12}g_\Sigma$, then $(\Sigma^2,\widetilde{g}_\Sigma)$ is also geodesically complete.
	\end{lemma}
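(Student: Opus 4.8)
The plan is to use the standard fact that $(\Sigma^2,\widetilde g_\Sigma)$ is geodesically complete exactly when every path $\gamma:[0,1)\to Int(\Sigma^2)$ of finite $\widetilde g_\Sigma$-length has a limit point in $\Sigma^2$, and to reduce this to a decay estimate on the conformal factor. Write $u=\sqrt{\mathcal V}$; on $Int(\Sigma^2)$ we have $u>0$ (since $\mathcal V=\det(G^{ij})>0$ there by Lemma \ref{LemmaStructInteriorPts}), and the hypothesis $\triangle_\Sigma\sqrt{\mathcal V}\le 0$ says $u$ is superharmonic. If $\inf_{\Sigma^2}u>0$ then $\widetilde g_\Sigma=u\,g_\Sigma\ge(\inf u)\,g_\Sigma$ and completeness of $\widetilde g_\Sigma$ is immediate from that of $g_\Sigma$, so we may assume $\inf_{\Sigma^2}u=0$. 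Now take $\gamma:[0,1)\to Int(\Sigma^2)$ with $\int_\gamma u^{1/2}\,ds_{g_\Sigma}<\infty$ and no $\Sigma^2$-limit point. Since $g_\Sigma$ is complete, $\gamma$ must leave every $g_\Sigma$-compact set, hence $\int_\gamma ds_{g_\Sigma}=\infty$; so it suffices to prove that no divergent path can have $\int_\gamma u^{1/2}\,ds_{g_\Sigma}<\infty$, i.e. that $\sqrt{\mathcal V}$ cannot decay too fast along a divergent path.

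For the core estimate I would pass to the global $(\varphi^1,\varphi^2)$-chart on the convex domain $\Sigma^2$, where $g_{\Sigma,ij}=\partial^2 G/\partial\varphi^i\partial\varphi^j$ is the Hessian of the convex symplectic potential $G$ and $u=(\det g_{\Sigma,ij})^{-1/2}$, so that $\widetilde g_\Sigma$ has coordinate volume form $d\varphi^1\,d\varphi^2$ (equivalently $\det\widetilde g_{\Sigma,ij}\equiv1$). A divergent $\gamma$ falls into two regimes: either $\varphi(\gamma(t))$ escapes to infinity in $\mathbb R^2$, or it converges to a finite non-included boundary point $q\in(\partial\Sigma^2)_N$. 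In the second regime I would exploit convexity: the Euclidean segment $\sigma$ from a fixed interior point toward $q$ has its interior in $Int(\Sigma^2)$, is $g_\Sigma$-divergent by Proposition \ref{PropNonInclBPt}, and satisfies $g_\Sigma(\dot\sigma,\dot\sigma)=d^2G/dt^2$ by (\ref{EqnSegmentLength}); its infinite $g_\Sigma$-length then forces $dG/dt\to+\infty$ along $\sigma$, which controls the growth of $\mathrm{Hess}\,G$ in the radial direction near $q$. Feeding this, together with the superharmonicity $\triangle_\Sigma u\le 0$ — which written in divergence form is a statement purely about the convex function $G$ — into a lower bound for $u=(\det\mathrm{Hess}\,G)^{-1/2}$, I would aim to show $\sqrt{\mathcal V}$ decays at worst like a power of the distance to $q$, so that $\int_\gamma u^{1/2}\,ds_{g_\Sigma}=\infty$. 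The escape-to-infinity regime is parallel, with Euclidean rays and the growth of $G$ at infinity in place of its boundary blow-up.

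The main obstacle is precisely this decay estimate. Superharmonicity of $\sqrt{\mathcal V}$ together with geodesic completeness of $g_\Sigma$ is \emph{not} by itself enough to stop $\sqrt{\mathcal V}$ from decaying exponentially fast along a divergent path — on the hyperbolic plane one can write down smooth positive radial superharmonic functions that do exactly that, making the conformally changed metric incomplete — so the argument must genuinely invoke the Hessian structure $g_{\Sigma,ij}=\partial^2G/\partial\varphi^i\partial\varphi^j$ (equivalently the normalization $\det\widetilde g_{\Sigma,ij}\equiv1$ and the convexity of $G$). The delicate point is that controlling one eigenvalue of $\mathrm{Hess}\,G$ along a radial segment does not immediately control $\det\mathrm{Hess}\,G$; one has to use the superharmonicity to tie the two eigenvalues together and produce a slow-decay bound on $\sqrt{\mathcal V}$ that is uniform over all divergent $\gamma$. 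That is where the real work lies.
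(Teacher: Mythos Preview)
Your proposal has a genuine gap, and you are candid about it: the decay estimate on $\sqrt{\mathcal V}$ along a divergent path is never actually established, only identified as ``where the real work lies.'' The approach you outline --- controlling the eigenvalues of $\mathrm{Hess}\,G$ along Euclidean segments and then tying them together via superharmonicity --- is not obviously wrong, but you have not carried it out, and it is not clear it can be made to work in either regime without an additional ingredient.

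The ingredient you are missing is the one computed in \S\ref{SubSecConfChangeOfMetric}: the Hessian structure $g_{\Sigma,ij}=\partial_i\partial_jG$ together with $\triangle_\Sigma\sqrt{\mathcal V}\le 0$ forces the \emph{conformal} metric $\widetilde g_\Sigma=\mathcal V^{1/2}g_\Sigma$ to have non-negative Gaussian curvature, by (\ref{EqnConformalScalar}). You correctly observe that superharmonicity of $\sqrt{\mathcal V}$ in $g_\Sigma$ is not enough by itself (your hyperbolic example), and you are right that the Hessian structure must enter --- but it enters precisely through $\widetilde K_\Sigma\ge 0$, not through direct eigenvalue bounds on $\mathrm{Hess}\,G$. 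Once you have $\widetilde K_\Sigma\ge 0$, the paper's argument runs entirely in the $\widetilde g_\Sigma$ metric: assume for contradiction a finite-length inextendible $\widetilde g_\Sigma$-geodesic $\gamma:[0,R)\to\Sigma^2$, let $r$ be the $\widetilde g_\Sigma$-distance from $\gamma(0)$, and use Laplacian comparison ($\widetilde\triangle_\Sigma r\le r^{-1}$, hence $\widetilde\triangle_\Sigma r^{-1}\ge r^{-3}$) to build a barrier $\epsilon(r^{-1}-R^{-1})$ under the superharmonic function $\mathcal V^{1/2}$ on the annulus $B_p(R)\setminus B_p(R/2)$. The maximum principle gives $\mathcal V^{1/2}\ge\epsilon(r^{-1}-R^{-1})$ there, so $\mathcal V^{-1/4}\le\epsilon^{-1/2}(tR)^{1/2}(R-t)^{-1/2}$ along $\gamma$, which is integrable on $[R/2,R)$; hence $\gamma$ has finite $g_\Sigma$-length, contradicting completeness of $g_\Sigma$. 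The whole point is that the barrier lives in the $\widetilde g_\Sigma$ geometry, where curvature is signed, rather than in the $g_\Sigma$ or Euclidean $\varphi$-geometry where you were working.
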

	\begin{proof}
		For a proof by contradiction suppose $(\Sigma^2,\tilde{g}_\Sigma)$ is not geodesically complete.
		Let $\gamma:[0,R]\rightarrow\Sigma^2$ be a shortest inextendible unit-speed geodesic in the $\widetilde{g}_\Sigma$ metric.
		Then $\gamma$ must have \emph{infinite} length in the $g_\Sigma$ metric; the reason is that, because the conformal factor $\sqrt{\mathcal{V}}$ is smooth and non-negative, if $\gamma$ has finite total length in $g_\Sigma$ it will remain within a compact subset $K$ of $\Sigma^2$.
		But on any compact subset the conformal factor $\mathcal{V}$ is smooth and nonzero, so the defining ODE for $\gamma$ remains smooth in $\tilde{g}_{\Sigma}$, so it is extendible.
		
		For a contradiction, we prove that $\gamma$ has finite length in the $g_\Sigma$ metric.
		Set $p=\gamma(0)$ and let $r$ be the distance function $r=dist(p,\cdot)$ for the $\tilde{g}_\Sigma$ metric.
		We create a lower barrier for the superharmonic function $\mathcal{V}$ by using a Bochner-style Laplacian comparison (a technique appearing in \cite{SY}).
		This is available because the curvature of $(\Sigma^2,\tilde{g}_\Sigma)$ is positive: $\widetilde{K}_\Sigma=\frac12\tilde{s}_\Sigma\ge0$ by (\ref{EqnConformalScalar}).
		Therefore comparing the $\widetilde{g}_\Sigma$ Laplacian $\widetilde{\triangle}_\Sigma$ to the Laplacian on flat Euclidean space $\triangle_{Eucl}$, the usual Laplacian comparison for the distance function provides $\widetilde\triangle_\Sigma{r}\le\triangle_{Eucl}r={r}^{-1}$.
		Therefore $\widetilde\triangle_\Sigma{r}^{-1}\ge{r}^{-3}$.
		
		Now we create a barrier for $\mathcal{V}$.
		We consider the annulus $B_p(R)\setminus{}B_p(R/2)$.
		On the outer boundary $\partial{}B_p(R)$ the function $\left(r^{-1}-R^{-1}\right)$ equals 0.
		Because the inner boundary $\partial{}B_p(R/2)$ is compact in $\Sigma^2$, there exists some $\epsilon>0$ so $\mathcal{V}^{\frac12}\ge\epsilon\left(r^{-1}-R^{-1}\right)$.
		Using $\widetilde{\triangle}{}_\Sigma\mathcal{V}^{\frac12}=0$, we now have
		\begin{eqnarray}
			\begin{aligned}
				&\mathcal{V}^{\frac12}\,-\,\epsilon\left(r^{-1}\,-\,R^{-1}\right)
				\;\ge\;0 \quad \text{on $\partial\mathcal{A}$} \quad \text{and} \\
				&\widetilde{\triangle}_\Sigma\left(\mathcal{V}^{\frac12}
				\,-\,\epsilon\left(r^{-1}\,-\,R^{-1}\right)\right)
				\;\le\;-\epsilon{r}^{-3}\;<\;0.
			\end{aligned}
		\end{eqnarray}
		Thus $\mathcal{V}^{\frac12}-\epsilon(r^{-1}-R^{-1})$ is superharmonic on the annulus and non-negative on its boundary.
		So $\mathcal{V}^{\frac12}\ge\epsilon(r^{-1}-R^{-1})$ on the annulus by the maximum principle.
		Because the path $\gamma$ lies within the annulus for $t\in[R/2,R]$ we immediately have
		\begin{eqnarray}
			\begin{aligned}
				\big(\left(\mathcal{V}\circ\gamma\right)(t)\big)^{\frac12}
				&\;>\;\epsilon\left(t^{-1}-R^{-1}\right),
				\quad \text{which is} \\
				\big(\left(\mathcal{V}\circ\gamma\right)(t)\big)^{-\frac14}
				&\;<\;\epsilon^{-\frac12}\left(t^{-1}-R^{-1}\right)^{-\frac12}
				\;=\;\epsilon^{-\frac12}(tR)^{\frac12}\left(R-t\right)^{-\frac12}.
			\end{aligned}
		\end{eqnarray}
		This allows us to estimate the $g_\Sigma$-length of $\gamma$.
		From $|\dot\gamma|_{g_\Sigma}=\mathcal{V}^{-\frac14}|\dot\gamma|_{\tilde{g}_\Sigma}$, we have
		\begin{eqnarray}
			\begin{aligned}
				|\dot\gamma|_{g_\Sigma}
				&\;=\;\big(\mathcal{V}\circ\gamma\big)^{-\frac14}\cdot|\dot\gamma|_{\tilde{g}_\Sigma}
				\;<\;\epsilon^{-\frac12}(tR)^{\frac12}\left(R-t\right)^{-\frac12}\cdot|\dot\gamma|_{\tilde{g}_\Sigma}
			\end{aligned}
		\end{eqnarray}
		within the annulus.
		Because also $|\dot\gamma|_{\tilde{g}_\Sigma}=1$, we have
		\begin{eqnarray}
			\begin{aligned}
				Len_{g_\Sigma}(\gamma[R/2,R])
				&\;=\;\int_{R/2}^R|\dot\gamma|_{g_\Sigma}dt
				\;=\;
				\int_{R/2}^R(\mathcal{V}{}^{-\frac14}\circ\gamma)(t)\,dt \\
				&\quad\;\le\;
				\epsilon^{-\frac12}R^{\frac12}\int_{R/2}^Rt^{\frac12}(R-t)^{-\frac12}dt
				\;=\;
				\epsilon^{\frac12}\frac{2+\pi}{4}R^{\frac32}.
			\end{aligned}
		\end{eqnarray}
		The total length of $\gamma$ in the $g_\Sigma$ metric is $\int_0^{R/2}|\dot\gamma|_{g_\Sigma}dt+\int_{R/2}^R|\dot\gamma|_{g_\Sigma}dt$.
		The first integral is finite because $\gamma(t)$ remains within a compact region of $\Sigma^2$ for $t\in[0,R/2]$.
		Thus $g_\Sigma$-length of $\gamma$ is finite.
		This contradiction completes the proof.
	\end{proof}
	
	\begin{theorem}[{\it cf.} Theorem \ref{ThmComplete}] \label{ThmRTwoClassification}
		Assume $(\Sigma^2,g_\Sigma)$ is geodesically complete and $\triangle_\Sigma\sqrt{\mathcal{V}}\le0$.
		Then $(\Sigma^2,g_\Sigma)$ and $(\Sigma^2,\tilde{g}_\Sigma)$ are flat.
	\end{theorem}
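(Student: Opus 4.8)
The plan is to assemble Lemmas \ref{LemmaTConstFlat}--\ref{LemmaConformalComplete}, using the conformally changed metric $\widetilde g_\Sigma = \mathcal V^{1/2} g_\Sigma$ as a bridge to the Cheng--Yau parabolicity criterion. First observe that, by (\ref{EqnScalarCurvatureEquation}), the hypothesis $\triangle_\Sigma\sqrt{\mathcal V}\le 0$ is exactly the statement that the scalar curvature $s$ of the parent manifold $M^4$ is non-negative. Then the conformal-change formula (\ref{EqnConformalScalar}) gives $\widetilde s_\Sigma = \mathcal V^{-\frac12}(|\Gamma_{ij}^k|^2 + \frac12 s)\ge 0$, so the Gaussian curvature of $(\Sigma^2,\widetilde g_\Sigma)$ satisfies $\widetilde K_\Sigma = \frac12\widetilde s_\Sigma\ge 0$. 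By Lemma \ref{LemmaConformalComplete}, this metric $\widetilde g_\Sigma$ is moreover geodesically complete.

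The next step is to conclude that $(\Sigma^2,J_\Sigma)$ is biholomorphic to $\mathbb C$. Since $(\Sigma^2,\widetilde g_\Sigma)$ is complete with $\widetilde K_\Sigma\ge0$, the Cheng--Yau criterion \cite{CY} shows it is parabolic in the sense of potential theory. Parabolicity is a conformal invariant in two dimensions (the Laplace--Beltrami operator of a surface is conformally covariant, so the class of superharmonic functions is unchanged by a conformal change of metric), hence $(\Sigma^2,g_\Sigma)$ is parabolic as well. By Proposition \ref{PropConvex} the region $\Sigma^2\subset\mathbb R^2$ is convex, hence simply connected and non-compact; a simply connected, non-compact, parabolic Riemann surface is biholomorphic to $\mathbb C$ (by uniformization, parabolicity excludes the disk and non-compactness excludes $\mathbb{CP}^1$).

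Finally, with $(\Sigma^2,J_\Sigma)\cong\mathbb C$ and $\triangle_\Sigma\sqrt{\mathcal V}\le 0$ both in hand, Lemma \ref{LemmaBiholoToCFlat} gives that $(\Sigma^2,g_\Sigma)$ is flat and that $\mathcal V$ is in fact constant, with $g_\Sigma$ having constant coefficients in the $(\varphi^1,\varphi^2)$ coordinates. Since $\mathcal V$ is then constant, the metric $\widetilde g_\Sigma = \mathcal V^{\frac12} g_\Sigma$ is a constant rescaling of the flat metric $g_\Sigma$, and so is flat as well, completing the proof.

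At this point essentially all of the analytic work has already been absorbed into the preceding lemmas, so the present argument is largely bookkeeping; the one point that genuinely requires attention is that the Cheng--Yau criterion must be applied to $\widetilde g_\Sigma$ rather than to $g_\Sigma$ directly, since it is only for the conformally changed metric that both completeness (Lemma \ref{LemmaConformalComplete}) and a definite curvature sign ($\widetilde K_\Sigma\ge 0$, from (\ref{EqnConformalScalar})) are simultaneously available --- the curvature $s_\Sigma$ of $g_\Sigma$ itself, which by (\ref{EqnCoordSigmaScalarComp}) equals $|\Gamma_{ij}^k|^2 - |\nabla\log\mathcal V^{\frac12}|^2$, has no definite sign. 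This is precisely why the conformal change of \S\ref{SubSecConfChangeOfMetric} was introduced.
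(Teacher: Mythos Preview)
Your proof is correct and follows essentially the same route as the paper's: use Lemma~\ref{LemmaConformalComplete} and (\ref{EqnConformalScalar}) to make $(\Sigma^2,\widetilde g_\Sigma)$ complete with $\widetilde K_\Sigma\ge0$, apply Cheng--Yau to get parabolicity and hence $(\Sigma^2,J_\Sigma)\cong\mathbb C$, then invoke Liouville (packaged as Lemma~\ref{LemmaBiholoToCFlat}) to force $\mathcal V$ constant and conclude via Lemma~\ref{LemmaTConstFlat}. Your write-up is in fact a bit more careful than the paper's in making explicit the simple-connectedness (from convexity) and the conformal invariance of the complex structure/parabolicity, but these are exactly the implicit steps the paper relies on.
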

	\begin{proof}
		By Lemma \ref{LemmaConformalComplete}, $(\Sigma^2,\widetilde{g}_\Sigma)$ is complete, and by (\ref{EqnConformalScalar}) $\widetilde{K}_\Sigma=\frac12\tilde{s}_\Sigma\ge0$.
		By the Cheng-Yau criterion for parabolicity, $(\Sigma^2,\widetilde{g}_\Sigma)$ is parabolic so is biholomorphic to $\mathbb{C}$.
		Then $\mathcal{V}^{\frac12}$ is a positive superharmonic function on $\mathbb{C}$, so is constant.
		By Lemma \ref{LemmaTConstFlat}, $g_\Sigma$ is flat and therefore so is $\tilde{g}_\Sigma$.
	\end{proof}
	
	\begin{corollary} [{\it cf.} Theorem \ref{ThmComplete}] \label{CorRTwoClassification}
		Assume $(M^4,J,g,\mathcal{X}^1,\mathcal{X}^2)$ has $s\ge0$, and assume the distribution $\{\mathcal{X}^1,\mathcal{X}^2\}$ is always rank 2.
		Then $M^4$ is flat $\mathbb{C}^2$ and $\mathcal{X}^1$, $\mathcal{X}^2$ are translational Killing fields.
	\end{corollary}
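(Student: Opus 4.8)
The plan is to reduce the statement to Theorem \ref{ThmRTwoClassification} applied to the reduction $(\Sigma^2,g_\Sigma)$, and then transport the conclusion back up to $M^4$. First I would note that the hypothesis ``$\{\mathcal{X}^1,\mathcal{X}^2\}$ is everywhere rank $2$'' is, by Lemma \ref{LemmaStructInteriorPts}, exactly the statement that $(\partial\Sigma^2)_I=\varnothing$ and that $\Phi:M^4\to\Sigma^2$ is a global Riemannian submersion; since $M^4$ is geodesically complete and a Riemannian submersion enjoys the geodesic-lifting property, $(\Sigma^2,g_\Sigma)$ is geodesically complete. The scalar-curvature equation (\ref{EqnScalarCurvatureEquation}) reads $\triangle_\Sigma\sqrt{\mathcal{V}}=-\tfrac12 s\sqrt{\mathcal{V}}$, so the hypothesis $s\ge0$ together with $\sqrt{\mathcal{V}}>0$ gives $\triangle_\Sigma\sqrt{\mathcal{V}}\le0$.

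Thus the hypotheses of Theorem \ref{ThmRTwoClassification} are satisfied, and $(\Sigma^2,g_\Sigma)$ is flat. I would then trace through the proof of that theorem (via Lemma \ref{LemmaBiholoToCFlat} and Lemma \ref{LemmaTConstFlat}) to extract the sharper facts that $\mathcal{V}$ is constant and that the entries $G^{ij}=\langle\nabla\varphi^i,\nabla\varphi^j\rangle$, hence also the entries of the inverse matrix $G_{ij}$, are constant as functions of $(\varphi^1,\varphi^2)$.

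Next I would push this down-to-up. Because $P^2=\varnothing$ there are no polar submanifolds, so the action-angle coordinates $(\varphi^1,\theta_1,\varphi^2,\theta_2)$ are global on $M^4$: simple connectivity of $M^4$ together with convexity (hence simple connectivity) of $\Sigma^2$ from Proposition \ref{PropConvex} forces the $\mathcal{X}^1$-$\mathcal{X}^2$ leaves to be copies of $\mathbb{R}^2$. In these coordinates $g$ is the block matrix of (\ref{EqnsGJOmegaM}) built from the now-constant matrices $G_{ij}$ and $G^{ij}$, and a metric with constant coefficients in some coordinate system is flat; so $M^4$ is flat. Being complete, flat and simply connected, $M^4$ is isometric to $\mathbb{R}^4$, and since $J$ is parallel with likewise constant coefficients in these coordinates, $(M^4,J,g)$ is the standard $\mathbb{C}^2$. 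Finally, $\mathcal{X}^i=\partial/\partial\theta_i$ has $|\mathcal{X}^i|^2=G_{ii}$ constant; a Killing field of Euclidean $\mathbb{R}^4$ is affine, $\mathcal{X}(x)=Ax+b$ with $A$ skew-symmetric, and $|\mathcal{X}(x)|^2=x^{\top}A^{\top}Ax+2(A^{\top}b)^{\top}x+|b|^2$, so constancy in $x$ forces $|Ax|^2\equiv0$, hence $A=0$ and $\mathcal{X}$ is a translation. This gives that $\mathcal{X}^1,\mathcal{X}^2$ are translational Killing fields.

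The proof is essentially an assembly of the preceding results, so there is no serious analytic obstacle; the one place that needs attention is the second step, where one must quarry from the \emph{proof} of Theorem \ref{ThmRTwoClassification} (not merely its statement) the constancy of $\mathcal{V}$ and of the momentum-coordinate metric entries $G^{ij}$. It is precisely this quantitative refinement, fed into the explicit matrix form (\ref{EqnsGJOmegaM}) of $g$, that upgrades ``flat reduction'' to ``flat $M^4$ with translational generators.''
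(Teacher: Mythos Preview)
Your proposal is correct and follows essentially the same route as the paper's own proof: verify completeness of $(\Sigma^2,g_\Sigma)$ via the submersion, use (\ref{EqnScalarCurvatureEquation}) to get $\triangle_\Sigma\sqrt{\mathcal V}\le0$, invoke Theorem \ref{ThmRTwoClassification} (and, as you correctly note, Lemma \ref{LemmaTConstFlat} behind it) to obtain constancy of the $G^{ij}$, and then read flatness of $M^4$ off the block form (\ref{EqnsGJOmegaM}). Two small remarks: first, $|\mathcal{X}^i|^2=G^{ii}$ rather than $G_{ii}$ (harmless here, since both are constant); second, your constant-norm argument for ``translational'' is actually more complete than the paper's terse ``because they are nowhere zero,'' which by itself does not suffice on $\mathbb{R}^4$ (e.g.\ a rank-$2$ rotation plus a translation out of its plane is nowhere zero yet not a translation).
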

	\begin{proof}
		Since $\mathcal{V}=0$ if and only if $\{\mathcal{X}^1,\mathcal{X}^2\}$ is a linearly dependent set, the reduction $\Sigma^2$ has no edges.
		The moment map is therefore a Riemannian submersion, so the metric polytope $(\Sigma^2,g_\Sigma)$ is complete.
		By equation (\ref{EqnConformalScalar}) we have $\triangle_\Sigma\sqrt{\mathcal{V}}\le0$.
		Theorem \ref{ThmRTwoClassification} now implies $g_\Sigma$ is flat, and $g_\Sigma$ is a constant matrix when expressed in $\varphi^1$-$\varphi^2$ coordinates.
		Referring to (\ref{EqnsGJOmegaM}), we have that $G$ is a constant matrix, so $g$ and $J$ are constant.
		Thus $M^4$ is also flat.
		
		The fact that $M^4\approx\mathbb{C}^2$ comes from our earlier assumption that $M^4$ is simply connected.
		The fact that the Killing fields are translational is due to the fact that they are nowhere zero.
	\end{proof}
	
	{\bf Remark.}
	Crucial to the proofs of Lemma \ref{LemmaTConstFlat} and Theorem \ref{ThmRTwoClassification} is the fact that a complete, simply connected $\Sigma^2$ with $K_\Sigma\ge0$ is biholomorphic to $\mathbb{C}$.
	This is a simple consequence of volume comparison and the Cheng-Yau criterion for parabolicity, namely that $\int_1^\infty\frac{t}{Vol\,B_t}dt=\infty$.
	The assertion that a simply connected, boundaryless, complete Riemann surface is parabolic if and only if it is actually $\mathbb{C}$ is a consequence of the uniformization theorem.
	The subject of parabolicity has received a great deal of attention; for a tiny sampling of this large subject see \cite{LT1} \cite{LT2} \cite{HK} \cite{GM} \cite{Web1} and references therein.

	%
	%
	%
	%
	%
	%
	%
	%
	%
	%
	\section{The Asymptotic Model Geometries} \label{SecAsymptoticAnalysis}
	Here we prove Theorem \ref{ThmSFClassification}.
	The earlier paper \cite{Web1} showed that \textit{if} the reduction $\Sigma^2$ of $M^4$ is closed and has at least one edge, then we can write down all possible metrics on $M^4$.
	Theorem \ref{ThmComplete} removed the need for the polygon to have an edge.
	We remark that cases (3) and (5) consist of one metric each, and (4) assumes that the polygon is closed.
	Thus to complete the ``$\Rightarrow$'' part of Theorem \ref{ThmSFClassification}, we must show, in cases (1) and (2), that the metrics written down in \cite{Web1} indeed meet the criteria of being ``asymptotically general'' and ``asymptotically equivariantly $\mathbb{R}^2\times\mathbb{S}^2$'' in the sense discussed in the introduction.
	We check this in Lemmas \ref{LemmaGeneralCase} and \ref{LemmaAsymptSRCheck} below.
	
	We must also prove the converse, that if the metric on $M^4$ has the asymptotic structure of any of the cases (1)-(5), then its reduction $\Sigma^2$ is closed.
	Again we must only check this in the cases (1) and (2), as the closure of $\Sigma^2$ is built into cases (3) and (4) already.
	In \ref{SubsecReductions} we prove the following lemma.
	\begin{lemma} \label{LemmaModelKillingBounded}
		Assume $M^4$ is either ``equivariantly asymptotically general,'' or ``equivariantly asymptotically $\mathbb{R}^2\times\mathbb{S}^2$.''
		Then the parent manifold $M^4$ has two unbounded polar submanifolds, and each has a Killing field which remains bounded away from zero.
		By Theorem \ref{ThmClosed} consequently any such manifold $M^4$ has closed reduction.
	\end{lemma}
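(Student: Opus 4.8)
The plan is to work directly from Definition \ref{DefModeled} together with the explicit list of model geometries. For case (1), the relevant models are Euclidean $\mathbb{C}^2$, the Taub-NUT, the generalized (chiral) Taub-NUTs of chirality $k\in(-1,0)\cup(0,1)$, and the maximally chiral Taub-NUTs with $k=\pm1$; for case (2), the models are the metrics listed in (\ref{EqnSRCompMomentums}). First I would check, model by model, that each model space $\widetilde{M}^4$ has (at least) two polar submanifolds whose images in the model reduction $\widetilde{\Sigma}^2$ are unbounded rays or lines, and that on each such polar submanifold $\widetilde{L}^2$ the restricted Killing field $\widetilde{\mathcal{X}}$ has norm bounded away from zero along the radial direction. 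For $\mathbb{C}^2$ with two rotational fields this is immediate: the two coordinate axes are the polar submanifolds, each is a copy of $\mathbb{C}$, and the rotational Killing field on each has $|\widetilde{\mathcal{X}}|$ growing linearly, hence certainly bounded below. For the Taub-NUT and chiral Taub-NUTs on $\mathbb{C}^2$ one uses the known description of their toric data (the momentum polygons have two unbounded edges meeting at a vertex, as in the ``general'' and ``parallel-ray'' pictures of Figure \ref{FigFiveModel}); on each edge-preimage the surviving Killing field is, in the generalized Taub-NUT case, \emph{asymptotically bounded} — this is exactly the statement recalled in the introduction that these metrics have ``a Killing field that is asymptotically bounded'' — and a direct look at the model shows the norm does not decay to zero. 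The maximally chiral ($k=\pm1$) and the $\mathbb{R}^2\times\mathbb{S}^2$-type models are handled the same way from their explicit momentum data in (\ref{EqnSRCompMomentums}) and \cite{Web3}.

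Next I would transfer these facts from the model $\widetilde{M}^4$ to $M^4$ using the covering map $\pi$ of Definition \ref{DefModeled}. Since $M^4$ is equivariantly asymptotically modeled, $\pi_*\widetilde{\mathcal{X}}^i=\mathcal{X}^i$, so the polar submanifolds of $M^4$ outside a compact set are covered by the polar submanifolds of $\widetilde{M}^4$; in particular $M^4$ has two polar submanifolds $L^2_1,L^2_2$ whose reductions are unbounded (the reduction map intertwines with $\pi$ up to the compact set $\widetilde{K}$, so unboundedness of $\widetilde l_j$ forces unboundedness of $l_j$), and each carries a Killing field $\mathcal{X}_j$. The estimate $\|\pi^*g-\tilde g\|_{\tilde g}=O(\tilde\rho^{-l})$ for $l>0$, or $o(1)$ for $l=0$, gives $|\mathcal{X}_j|_g = |\widetilde{\mathcal{X}}_j|_{\tilde g}\,(1+o(1))$ along $L^2_j$ as one goes to infinity; combined with the model lower bounds of the previous paragraph this yields a constant $C_1>0$ with $|\mathcal{X}_j|\ge C_1>0$ — in fact bounded below by a positive constant, which is far stronger than the $|\mathcal{X}|\ge C_1|r|^{-1}$ hypothesis of Theorem \ref{ThmClosed}. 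One small point to dispatch here: a radial distance function $r$ on $L^2_j$ is built from its momentum function $\varphi$ via $dr=|d\varphi|^{-1}d\varphi$, and one must confirm that when $l_j$ is an unbounded edge this $r$ is indeed unbounded; this follows because $\varphi$ ranges over an unbounded interval (the edge is unbounded) and, by the asymptotic model comparison, $|d\varphi|=|\mathcal{X}|$ stays bounded, so $r\to\infty$.

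Finally, having produced on each unbounded polar submanifold $L^2_j$ a radial distance function $r$ that is unbounded and a Killing field satisfying the first bulleted hypothesis of Theorem \ref{ThmClosed}, I invoke that theorem directly: each edge $l_j\subset\partial\Sigma^2$ is closed and has at most one terminal (included) point, and since in these cases every component of $\partial\Sigma^2$ arises this way — there are exactly the two unbounded components, meeting (if at all) at included vertices — every component of $\partial\Sigma^2$ contains an included point, so $\Sigma^2$ is closed. The main obstacle, and the only place real work is needed, is the model-by-model verification in the first paragraph: one must have enough explicit control of the generalized and maximally chiral Taub-NUT metrics (and of the $\mathbb{R}^2\times\mathbb{S}^2$-type models in (\ref{EqnSRCompMomentums})) to pin down the behaviour of $|\widetilde{\mathcal{X}}|$ along the edge-preimages. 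For the ALF-like and exceptional cases this is exactly the asymptotic analysis carried out in \cite{Web3}, which I would cite; for the $\mathbb{R}^2\times\mathbb{S}^2$ case it is a direct computation from the momentum expressions (\ref{EqnSRCompMomentums}), which I would record as Lemmas \ref{LemmaGeneralCase} and \ref{LemmaAsymptSRCheck}.
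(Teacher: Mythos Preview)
Your proposal is correct and follows essentially the same route as the paper: verify on each model $\widetilde{M}^4$ that the two unbounded polar submanifolds carry Killing fields bounded away from zero (the paper does this by explicit computation in Lemmas \ref{LemmaModeledOnTaubNUT} and \ref{LemmaModeledOnRS}), transfer these bounds to $M^4$ via the equivariant map and the metric comparison $\|\pi^*g-\tilde g\|_{\tilde g}=o(1)$ (the paper isolates this as Lemma \ref{EqnComparingPolarsToModel}), and then invoke Theorem \ref{ThmClosed}. One small correction: the lemmas you cite at the end, \ref{LemmaGeneralCase} and \ref{LemmaAsymptSRCheck}, are the statements that the metrics are asymptotically modeled on the relevant spaces, not the Killing-field lower bounds you need; the paper records the latter separately as Lemmas \ref{LemmaModeledOnTaubNUT} and \ref{LemmaModeledOnRS}.
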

	
	In cases (1), (2), and (3) we have two different bijections between $\Sigma^2$ and the closed upper half-plane.
	These are
	\begin{equation}
		z:\Sigma^2\rightarrow\overline{H}{}^2 \quad\text{and}\quad
		\Phi:\overline{H}{}^2\rightarrow\Sigma^2.
	\end{equation}
	The first is the ``volumetric normal function'' $z=x+\sqrt{-1}y$ discussed in \S\ref{SubSecRedProps}, which is a biholomorphism by Proposition 1.1 of \cite{Web1}.
	The second is given by $\Phi(x,y)=(\varphi^1(x,y),\varphi^2(x,y))$, which is smooth everywhere except at corner points, where it is Lipschitz.
	As discussed in Section \ref{SecPrelims} the geometry of $M^4$ is completely determined by $(\Sigma^2,g_\Sigma)$, which itself is completely determined by the relationship between the $(\varphi^1,\varphi^2)$ and $(x,y)$ coordinates.
	From (2.13) of \cite{Web1} the metric is
	\begin{equation}
		\begin{aligned}
			&g_\Sigma\;=\;\frac{1}{y}\text{det}(A)\left(dx\otimes{}dx\,+\,dy\otimes{}dy\right)
		\end{aligned} \label{EqnMetricInXY}
	\end{equation}
	where $A$ is the coordinate transition matrix $A=\left(\frac{\partial\{\varphi^1,\varphi^2\}}{\partial\{x,y\}}\right)$.
	
	As discussed in the introduction, we say the manifold $(M^4,g)$ is equivariantly asymptotically modeled on $(\widetilde{M}{}^4,\widetilde{g})$ to order $l$ if, outside some compact set $\widetilde{K}\subset\widetilde{M}{}^4$ there exists an equivariant covering map $\pi:\widetilde{M}{}^4\setminus{}\widetilde{K}\rightarrow{}M^4$ so that
	\begin{equation}
		\left\|\pi^*g\;-\;\widetilde{g}\right\|_{\widetilde{g}}\;=\;O(\tilde{\rho}{}^{-l}), \text{ or $o(1)$ if $l=0$}.
	\end{equation}
	In what follows, the models $\widetilde{M}{}^4$ will always be a toric Kahler manifold with momentum functions $\tilde\varphi^1,\tilde\varphi^2:\overline{H}{}^2\rightarrow\widetilde{\Sigma}{}^2$.
	We can use the same $(x,y)$ coordinates to express either metric, and take the $\pi$ to simply be the identity.
	Then the comparison is
	\begin{equation}
		\begin{aligned}
			\left\|\pi^*g\;-\;\tilde{g}\right\|_{\widetilde{g}}
			&\;=\;
			\frac{1}{y}\left|\text{det}(A)-\text{det}(\widetilde{A})\right|
			\left\|dx\otimes{}dx+dy\otimes{}dy\right\|_{\widetilde{g}} \\
			&\;=\;
			\frac{1}{y}\left|\text{det}(A)-\text{det}(\widetilde{A})\right|
			\sqrt{2}(\|dx\|_{\widetilde{g}})^2 \\
			&\;=\;
			\frac{1}{y}\left|\text{det}(A)-\text{det}(\widetilde{A})\right|
			\sqrt{2}\frac{y}{\text{det}(\widetilde{A})}
			\;=\;
			\sqrt{2}\left|1-\frac{\text{det}(A)}{\text{det}(\widetilde{A})}\right|.
		\end{aligned} \label{EqnTheKeyQuotient}
	\end{equation}
	Therefore proving that $\widetilde{M}{}^4$ is an equivariant asymptotic model for $M^4$ comes down to comparing coordinate transition Jacobians for the two manifolds.
	
	%
	%
	%
	%
	%
	%
	\subsection{The ``general'' case.}
	We show that any metric on a polygon with non-parallel rays is asymptotically modeled on a generalized Taub-NUT metric.
	From \cite{Web1}, after possible affine recombination, the momentum functions on such a polygon always have the form
	\begin{equation}
		\small
		\begin{aligned}
			\varphi^1
			&\;=\;\frac{\alpha}{2}(y)^2
			+\frac{s_d}{2}\left((x-x_d)+\sqrt{(x-x_d)^2+(y)^2}\right) \\
			&+\sum_{i=1}^{d-1}\frac{m_{i+1}-m_i}{2(x_{i+1}-x_i)}\left(
			x_{i+1}-
			\sqrt{(x-x_{i+1})^2+(y)^2}
			-x_i+\sqrt{(x-x_{i})^2+(y)^2}
			\right) \\
			\varphi^2
			&\;=\;\frac{\beta}{2}(y)^2
			+n_1
			+\frac{s_0}{2}\left(-(x-x_1)+\sqrt{(x-x_1)^2+(y)^2}\right) \\
			&+\sum_{i=1}^{d-1}\frac{n_{i+1}-n_i}{2(x_{i+1}-x_i)}\left(
			x_{i+1}-\sqrt{(x-x_{i+1})^2+(y)^2}
			-x_i+\sqrt{(x-x_{i})^2+(y)^2}
			\right)
		\end{aligned} \label{EqnCompsOfOrigGeneric}
	\end{equation}
	where the values $s_0$, $s_d$, $m_i$, $n_i$, and the differences $x_{i+1}-x_i$ are determined uniquely from the labeled polygon.
	The values $\alpha,\beta\ge0$ are ``free'' parameters, meaning in this case that changing $\alpha$ or $\beta$ changes the metric but does not change the polygon or its labels.
	Specifically, by the recipe of \cite{Web1}, the values $\{x_1,\dots,x_d\}$ are the Lipschitz points on the $x$-axis in $\mathbb{C}$ and $\{p_1,\dots,p_d\}$ are the vertex point on the boundary of $\Sigma^2$ where $p_i=(m_i,n_i)$.
	The polygon boundary labels are $s_0$ and $s_d$ on the rays, and $s_i=\frac{|p_{x_{i+1}}-p_i|}{x_{i+1}-x_i}$ on the $i^{th}$ boundary segment.
	See Figure \ref{FigGeneralModel}
	\begin{figure}[h]
		\begin{tabular}{ll}
			\includegraphics[scale=0.17]{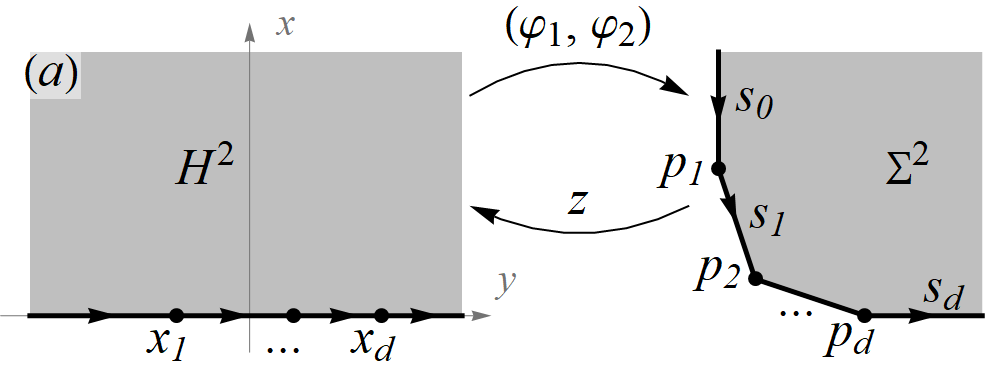}
			& \includegraphics[scale=0.17]{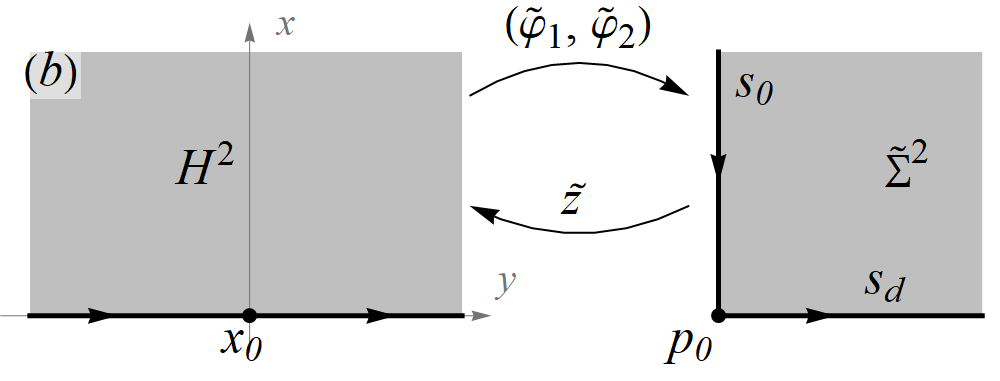}
		\end{tabular}
		\caption{
			(a) A typical ``general case'' polygon $\Sigma^2$ with non-parallel rays, and
			(b) a Taub-NUT style asymptotic model $\widetilde{\Sigma}{}^2$.
		} \label{FigGeneralModel}
	\end{figure}
	
	Our comparison manifold $\widetilde{M}{}^4$ will be any of the generalized Taub-NUT manifolds; these have momentum functions
	\begin{equation}
		\begin{aligned}
			\tilde{\varphi}{}^1
			&\;=\;\frac{\alpha}{2}(y)^2
			+\frac{s_d}{2}\left(x+\sqrt{(x)^2+(y)^2}\right) \\
			\tilde{\varphi}{}^2
			&\;=\;\frac{\beta}{2}(y)^2
			+\frac{s_0}{2}\left(-x+\sqrt{(x)^2+(y)^2}\right). 
		\end{aligned} \label{EqnTaubNUTModels}
	\end{equation}
	The resulting generalized Taub-NUT manifolds were thoroughly studied in \cite{Web3}.
	Their reductions are wedges in $\mathbb{R}^2$, which after possible affine transformation is a quarter-plane.
	We must compute $A=\left(\frac{\partial\{\varphi^1,\varphi^2\}}{\{x,y\}}\right)$ and $\widetilde{A}=\left(\frac{\partial\{\tilde\varphi^1,\tilde\varphi^2\}}{\{x,y\}}\right)$.
	From (\ref{EqnTaubNUTModels}),
	\begin{equation}
		\small
		\begin{aligned}
			\frac{\partial\tilde\varphi^1}{\partial{}x}
			&\;=\;\frac{s_d}{2}\left(1+\cos\theta\right), \;\;\;\,
			\frac{\partial\tilde\varphi^1}{\partial{}y}
			\;=\;\left(\alpha{}r+\frac{s_d}{2}\right)\sin\theta, \\
			\frac{\partial\tilde\varphi^2}{\partial{}x}
			&\;=\;\frac{s_0}{2}\left(-1+\cos\theta\right), \;
			\frac{\partial\tilde\varphi^2}{\partial{}y}
			\;=\;\left(\beta{}r+\frac{s_0}{2}\right)\sin\theta
		\end{aligned} \label{EqnCompsOfCompTaubNUT}
	\end{equation}
	where we used $x=r\cos\theta$ and $y=r\sin\theta$; because $x$ and $y$ are in the upper half-plane we have $\theta\in[0,\pi]$.
	Then we have
	\begin{equation}
		\small
		\begin{aligned}
			&\widetilde{A}
			=\left(\begin{array}{cc}
				0 & \alpha \\
				0 & \beta
			\end{array}\right)y
			+
			\frac12\left(\begin{array}{cc}
				s_d(1+\cos\theta) & s_d\sin\theta \\
				s_0(-1+\cos\theta) & s_0\sin\theta
			\end{array}\right) \\
			&\text{det}(\widetilde{A})
			=
			\frac12\left(\alpha{}s_0+\beta{}s_d-(\alpha{}s_0-\beta{}s_d)\cos\theta\right)y
			+\frac12s_0s_d\sin\theta.
		\end{aligned}
	\end{equation}
	For $A$ and $\text{det}(A)$, we use (\ref{EqnCompsOfOrigGeneric}) to get
	\begin{equation}
		\small
		\begin{aligned}
			\frac{\partial\varphi^1}{\partial{}x}
			&\;=\;
			\frac{\partial\tilde\varphi^1}{\partial{}x}+\frac{s_d}{2}\left(\frac{x-x_d}{\sqrt{(x-x_d)^2+(y)^2}}-\frac{x}{\sqrt{(x)^2+(y)^2}}\right)\\
			&\quad+\frac12\sum_{i=1}^{d-1}\frac{m_{i+1}-m_i}{x_{i+1}-x_i}\left(
			\frac{x-x_{i}}{\sqrt{(x-x_{i})^2+(y)^2}}
			-\frac{x-x_{i+1}}{\sqrt{(x-x_{i+1})^2+(y)^2}}
			\right) \\
			\frac{\partial\varphi^1}{\partial{}y}
			&\;=\;\frac{\partial\tilde\varphi^1}{\partial{}y}
			+\frac{s_d}{2}\left(\frac{y}{\sqrt{(x-x_d)^2+(y)^2}}-\frac{y}{\sqrt{(x)^2+(y)^2}}\right) \\
			&\quad+\frac12\sum_{i=1}^{d-1}\frac{m_{i+1}-m_i}{x_{i+1}-x_i}\left(
			\frac{y}{\sqrt{(x-x_{i})^2+(y)^2}}
			-\frac{y}{\sqrt{(x-x_{i+1})^2+(y)^2}}\right)
		\end{aligned} \label{EqnFourForA1}
	\end{equation}
	and similarly
	\begin{equation}
		\begin{aligned}
			\frac{\partial\varphi^2}{\partial{}x}
			&\;=\;
			\frac{\partial\tilde\varphi^2}{\partial{}x}
			+\frac{s_0}{2}\left(\frac{x-x_1}{\sqrt{(x-x_1)^2+(y)^2}}-\frac{x}{\sqrt{(x)^2+(y)^2}}\right)\\
			&\quad+\frac12\sum_{i=1}^{d-1}\frac{n_{i+1}-n_i}{x_{i+1}-x_i}\left(
			\frac{x-x_{i}}{\sqrt{(x-x_{i})^2+(y)^2}}
			-\frac{x-x_{i+1}}{\sqrt{(x-x_{i+1})^2+(y)^2}}
			\right) \\
			\frac{\partial\varphi^2}{\partial{}y}
			&\;=\;\frac{\partial\tilde\varphi^2}{\partial{}y}
			+\frac{s_d}{2}\left(\frac{y}{\sqrt{(x-x_1)^2+(y)^2}}-\frac{y}{\sqrt{(x)^2+(y)^2}}\right) \\
			&\quad+\frac12\sum_{i=1}^{d-1}\frac{n_{i+1}-n_i}{x_{i+1}-x_i}\left(
			\frac{y}{\sqrt{(x-x_{i})^2+(y)^2}}
			-\frac{y}{\sqrt{(x-x_{i+1})^2+(y)^2}}\right)
		\end{aligned} \label{EqnFourForA2}
	\end{equation}
	We estimate these lengthy expressions using the binomial formula
	\begin{equation}
		\frac{1}{\sqrt{1+t}}\;=\;\sum_{k=0}^\infty{{-1/2}\choose{k}}t^k
		\;=\;1-\frac12t\,+\,\frac38t^2-\frac{5}{16}t^3\pm\dots \label{EqnSeriesDeriv}
	\end{equation}
	which converges when $t\in(-1,1]$.
	The idea is to use $t=\frac{1}{r}$, so series of the form (\ref{EqnSeriesDeriv}) converge for all sufficiently large $r$.
	We have
	\begin{equation}
		\begin{aligned}
			&\frac{x-x_{i}}{\sqrt{(x-x_{i})^2+(y)^2}}
			\;=\;\frac{x-x_i}{r}\frac{1}{\sqrt{1-2\frac{x_i}{r}\cos\theta+\left(\frac{x_i}{r}\right)^2}} \\
			&\quad\;=\;\left(\cos\theta-\frac{x_i}{r}\right)
			\sum_{k=0}^\infty{{-1/2}\choose{k}}\left(-2x_ir^{-1}\cos\theta+(x_i)^2r^{-2}\right)^k \\
			&\quad\;=\;
			\cos\theta
			-\left(x_i\sin^2\theta\right)\frac{1}{r}
			-\frac32\left(x_i^2\cos\theta\sin^2\theta\right)\frac{1}{r^2}
			+O(r^{-3})
		\end{aligned}
	\end{equation}
	and
	\begin{equation}
		\begin{aligned}
			&\frac{y}{\sqrt{(x-x_{i})^2+(y)^2}}
			\;=\;\frac{y}{r}\frac{1}{\sqrt{1-2\frac{x_i}{r}\cos\theta+\left(\frac{x_i}{r}\right)^2}} \\
			&\quad\;=\;\sin\theta
			\sum_{k=0}^\infty{{-1/2}\choose{k}}\left(-2x_ir^{-1}\cos\theta+(x_i)^2r^{-2}\right)^k \\
			&\quad\;=\;
			\sin\theta
			+\left(x_i\sin\theta\cos\theta\right)\frac{1}{r}
			+\frac14(x_i)^2\left(1+3\cos2\theta\right)\sin\theta\frac{1}{r^2}
			+O(r^{-3})
		\end{aligned}
	\end{equation}
	Then we have the series approximation for $A$
	\begin{equation}
		\begin{aligned}
			&A
			=\left(\begin{array}{cc}
				0 & \alpha \\
				0 & \beta
			\end{array}\right)y
			+
			\frac12\left(\begin{array}{cc}
				s_d(1+\cos\theta) & s_d\sin\theta \\
				s_0(-1+\cos\theta) & s_0\sin\theta
			\end{array}\right) \\
			&\quad\quad
			+
			\frac12\left(\begin{array}{cc}
				(m_d-s_dx_d)\sin^2\theta &-(m_d-s_dx_d)\sin\theta\cos\theta \\
				-(n_1+s_0x_1)\sin^2\theta & (n_1+s_0x_1)\sin\theta\cos\theta
			\end{array}\right)\frac{1}{r}
			+O(r^{-2})
		\end{aligned}
	\end{equation}
	and for $\text{det}(A)$ we find
	\begin{equation}
		\begin{aligned}
			&\text{det}(A)
			=\frac12\left((s_0\alpha+s_d\beta)-(s_0\alpha-s_d\beta)\cos\theta\right)y \\
			&\hspace{0.5in}+
			\frac12\Big(
			(s_0s_d+\alpha(n_1+s_0x_1)+\beta(m_d-s_dx_d)) \\
			&\hspace{1.0in}-(\alpha(n_1+s_0x_1)+\beta(m_d-s_dx_d))\cos^2\theta
			\Big)\sin\theta
			+O(r^{-1}).
		\end{aligned}
	\end{equation}
	Then because $\text{det}(A)$ and $\text{det}(\widetilde{A})$ have the same leading term, we have therefore $1-\text{det}(A)/\text{det}(\widetilde{A})=O(r^{-1})$.
	A computation gives
	\begin{equation}
		\begin{aligned}
			1-\frac{\text{det}(A)}{\text{det}(\widetilde{A})}
			\;=\;
			\frac{(\alpha(n_1+s_0x_1)+\beta(m_d-s_dx_d))\sin^2\theta}{-(\alpha{}s_0+\beta{}s_d)+(\alpha{}s_0-\beta{}s_d)\cos\theta}\frac{1}{r}
			+O(r^{-2}).
		\end{aligned} \label{EqnGeneralCaseComparison}
	\end{equation}
	To address the worry that the denominator in the leading term of (\ref{EqnGeneralCaseComparison}) might be zero, we point out that $s_0$ and $s_d$ have the same sign and $\alpha,\beta\ge0$.
	Therefore the denominator can only be zero if $\alpha=0$ or $\beta=0$ or both.
	When $\beta=0$ but $\alpha\ne0$ or when $\alpha=0$ but $\beta\ne0$ (this is the ``exceptional'' case) we have
	\begin{equation}
		1-\frac{\text{det}(A)}{\text{det}(\widetilde{A})}
		\;=\;-\frac{(n_1+s_0x_1)(1+\cos\theta)}{s_0}\frac{1}{r}
		+O(r^{-2}). \label{EqnGeneralCaseSpecialComparison}
	\end{equation}
	When both $\alpha=\beta=0$ the Taub-NUT is simply Euclidean space; the estimate is
	\begin{equation}
		\small
		\begin{aligned}
			&1-\frac{\text{det}(A)}{\text{det}(\widetilde{A})} \\
			&\;=\;
			\frac{-(m_ds_0+s_dn_1)-s_0s_d(x_1-x_d)+(m_ds_0-s_dn_1-s_0s_d(x_1+x_d))\cos\theta}{2s_0s_d}
			\frac{1}{r} \\
			&\hspace{0.2in}+O(r^{-2}).
		\end{aligned} \label{EqnGeneralCaseVerySpecialComparison}
	\end{equation}
	
	\begin{lemma} \label{LemmaGeneralCase}
		Assume $M^4$ has a closed reduction polygon $\Sigma^2$ with two boundary rays that are not parallel.
		Then its asymptotic geometry is ALE, ALF, ALF-like, or exceptional to order $1$.
	\end{lemma}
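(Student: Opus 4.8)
The plan is to treat this lemma as the bookkeeping step that packages the Jacobian comparison already carried out in (\ref{EqnGeneralCaseComparison})--(\ref{EqnGeneralCaseVerySpecialComparison}); the analytic heart is done, and what remains is to choose the model, name its diffeomorphism type, and convert decay in the coordinate radius $r$ into decay in the model distance $\widetilde\rho$. First I would apply the classification of \cite{Web1}: since $\Sigma^2$ is closed with two rays (in particular it has an edge), that classification applies, and after an affine recombination of $\mathcal X^1,\mathcal X^2$ --- which changes neither the diffeomorphism type of $M^4$ nor its equivariant-asymptotic-model class --- the momentum functions of $M^4$ are exactly (\ref{EqnCompsOfOrigGeneric}), with $\alpha,\beta\ge 0$ free and $s_0,s_d$ of one sign. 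I would then take $\widetilde M^4$ to be the generalized Taub--NUT with momentum functions (\ref{EqnTaubNUTModels}) built from the same $\alpha,\beta,s_0,s_d$, i.e.\ (\ref{EqnCompsOfOrigGeneric}) with the Lipschitz points $x_i$ collapsed to $0$ and the bounded edges deleted; its reduction is a wedge between two non-parallel rays, and by \cite{Web3} the type of $\widetilde M^4$ is flat $\mathbb C^2$ (the ALE model) if $\alpha=\beta=0$, a maximally chiral (``exceptional'') Taub--NUT of chirality $k=\pm1$ if exactly one of $\alpha,\beta$ vanishes, and a generalized Taub--NUT of chirality $k\in(-1,1)$ determined by $\alpha,\beta,s_0,s_d$ if both are positive, with $k=0$ the classic Taub--NUT (the ALF model) and $k\ne 0$ the ALF-like ones.

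Next I would run the comparison in common coordinates. Both $\Sigma^2$ and the model reduction are carried by their volumetric normal functions $z=x+\sqrt{-1}\,y$ onto the whole closed half-plane $\{y\ge0\}$ (Proposition 1.1 of \cite{Web1}), so in the resulting $(x,y,\theta_1,\theta_2)$ coordinates $M^4$ and $\widetilde M^4$ are identified --- including over the two boundary rays, where in both manifolds the same linear combination of Killing fields collapses --- outside a compact coordinate disk containing all the $x_i$. Taking $\pi$ to be this identity identification (automatically equivariant, since it fixes $\theta_1,\theta_2$), equation (\ref{EqnTheKeyQuotient}) reduces the comparison to $\sqrt2\,\bigl|1-\det A/\det\widetilde A\bigr|$, and the expansions (\ref{EqnGeneralCaseComparison}), (\ref{EqnGeneralCaseSpecialComparison}), (\ref{EqnGeneralCaseVerySpecialComparison}) --- for $\alpha,\beta>0$, for exactly one positive, and for both zero, respectively --- give $\|\pi^*g-\widetilde g\|_{\widetilde g}=O(r^{-1})$ uniformly as $r=\sqrt{x^2+y^2}\to\infty$; the sign hypotheses on $\alpha,\beta,s_0,s_d$ are precisely what keeps the denominators in those expansions from vanishing.

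The one genuinely new (though routine) point is upgrading $O(r^{-1})$ to the required $O(\widetilde\rho^{\,-1})$, and I expect this to be the only real obstacle. For it I would establish $\widetilde\rho=O(r)$ on each model, either by citing the detailed study of these geometries in \cite{Web3} or directly from $\widetilde g_\Sigma=\tfrac1y\det(\widetilde A)(dx\otimes dx+dy\otimes dy)$: since $\det(\widetilde A)/y$ is bounded, intrinsic distance on the model reduction is $O(r)$, while over a point at coordinate radius $r$ the $T^2$-fiber has diameter $O(r)$ (one circle has length $\sqrt{G^{11}}=O(r)$ and the other is bounded or $O(\sqrt r)$), so $\widetilde\rho=O(r)$ and hence $O(r^{-1})=O(\widetilde\rho^{\,-1})$. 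The friction is making this bound uniform through the collapsing directions, where $\widetilde g_\Sigma$ degenerates --- near each ray, and, in the exceptional case, along an entire ray; in the ALE case one in fact gets $r\sim\widetilde\rho^{\,2}$ and thus $O(\widetilde\rho^{\,-2})$, which a fortiori is order $1$. Reading off the four model types from the first step then yields the lemma.
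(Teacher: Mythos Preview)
Your outline matches the paper's proof almost step for step: choose the generalized Taub--NUT model (\ref{EqnTaubNUTModels}) with the same $\alpha,\beta,s_0,s_d$, read off the four asymptotic types from the values of $\alpha,\beta$, and invoke (\ref{EqnGeneralCaseComparison})--(\ref{EqnGeneralCaseVerySpecialComparison}) together with (\ref{EqnTheKeyQuotient}) to get $O(r^{-1})$ decay. For the conversion $O(r^{-1})\Rightarrow O(\widetilde\rho^{\,-1})$ the paper simply cites Corollary~3.3 of \cite{Web3}; your direct argument via the bounded conformal factor $\det(\widetilde A)/y$ is a valid alternative and your observation about the ALE case is correct.

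There is one step you skip that the paper treats explicitly. Equation (\ref{EqnTheKeyQuotient}), as it is derived just above it from (\ref{EqnMetricInXY}), compares only the base part $g_\Sigma$ of the metric in the $(x,y)$ directions; it says nothing directly about the fiber part $g^\perp=G^{ij}\,d\theta_i\otimes d\theta_j$. The paper's proof devotes its last paragraph to closing this gap: it computes that $g^\perp=|dx|^{-2}\bigl(Jdx\otimes Jdx+Jdy\otimes Jdy\bigr)=g_\Sigma(J\cdot,J\cdot)$, so that $\|\widetilde g^\perp-g^\perp\|_{\widetilde g}=\|\widetilde g_\Sigma-g_\Sigma\|_{\widetilde g_\Sigma}$ and the same $O(r^{-1})$ estimate transfers. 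Your proposal leans on (\ref{EqnTheKeyQuotient}) as if it already delivered the full four-dimensional comparison; you should add this fiber check (it is short, but not automatic).
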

	\begin{proof}
		The comparison momenta $\tilde\varphi^1$, $\tilde\varphi^2$ produce the generalized Taub-NUT metrics; see \cite{Web3} for detailed information.
		When $\alpha=\beta=0$ the ``Taub-NUT'' is flat Euclidean space (so metrics that are asymptotically equivalent to this are ALE).
		When $\alpha=\beta>0$ then the comparison metric is a multiple of the standard Ricci-flat Taub-NUT (so metrics that are asymptotically equivalent to this are asymptotically ALF).
		When $\alpha=0$ but $\beta\ne0$ or vice-versa then the comparison metric is an ``exceptional Taub-NUT'' (so metrics that are asymptotically equivalent to this are asymptotically exceptional).
		And when $\alpha>0$, $\beta>0$ but $\alpha\ne\beta$ then the comparison is a chiral Taub-NUTs (so metrics that are asymptotically equivalent to this are asymptotically ALF-like).
		
		From (\ref{EqnGeneralCaseComparison}), (\ref{EqnGeneralCaseSpecialComparison}), (\ref{EqnGeneralCaseVerySpecialComparison}) we have that ``general case'' polygon metrics $(\Sigma^2,g_\Sigma)$ are asymptotically equivalent to their corresponding comparison metrics $(\widetilde{\Sigma}{}^2,\tilde{g}_\Sigma)$.
		Using Corollary 3.3 of \cite{Web3} we have that the coordinate distance function $r$ on $\mathbb{C}$ is, to within a constant multiple, close to the distance function $\tilde\rho$; therefore we have fast decay $\|\tilde{g}_\Sigma-g_\Sigma\|=O(\tilde\rho{}^{-1})$.
		
		The estimates (\ref{EqnGeneralCaseComparison}), (\ref{EqnGeneralCaseSpecialComparison}), (\ref{EqnGeneralCaseVerySpecialComparison}) show the$(\Sigma^2,g_\Sigma)$ polygon metric is close to the comparison polygon $(\widetilde{\Sigma}{}^2,\tilde{}g_\Sigma)$, but we must also check that the manifold metric $(M^4,g)$ is asymptotically close to the comparison metric $(\widetilde{M}{}^2,\tilde{g})$.
		By (\ref{EqnsGJOmegaM}) the action $\varphi^1$-$\varphi^2$ components of the metric are orthogonal to the angle $\theta_1$-$\theta_2$ components.
		The action components have already been checked: this part of the metric is precisely the polygon metric.
		The part perpendicular to this we denote $g^\perp$.
		By (\ref{EqnsGJOmegaM}) we have
		\begin{equation}
			\begin{aligned}
				g^\perp
				\;=\;
				G^{ij}d\theta_i\otimes{}d\theta_j.
			\end{aligned}
		\end{equation}
		\begin{equation}
			G^{ij}
			=g(\nabla\varphi^i,\nabla\varphi^j)
			=A^{is}A^{jt}g(dx^i,\,dx^j)
			=\delta_{st}A^{is}A^{jt}|dx|^2
		\end{equation}
		We have that $dx$ and $dy$ are orthogonal, so that $\partial/\partial{}x=|dx|^{-2}\nabla{}x$.
		Then we have
		\begin{equation}
			\begin{aligned}
				g^\perp
				&\;=\;
				|dx|^2
				\delta_{st}(A^{is}d\theta_i)\otimes(A^{jt}d\theta_j) \\
				&\;=\;
				|dx|^2
				\delta_{st}\left(A^{is}J\frac{\partial}{\partial\varphi^i}\right)_\flat\otimes\left(A^{jt}J\frac{\partial}{\partial\varphi^j}\right)_\flat \\
				&\;=\;
				|dx|^2
				\delta_{st}\left(J\frac{\partial}{\partial{}x^i}\right)_\flat\otimes\left(J\frac{\partial}{\partial{}x^j}\right)_\flat \\
				&\;=\;
				|dx|^{-2}
				\left(
				Jdx\otimes{}Jdx
				+Jdy\otimes{}Jdy
				\right) \\
			\end{aligned}
		\end{equation}
		where we used $\frac{\partial}{\partial\theta_i}=(Jd\varphi^i)^\sharp$ and the dualization $d\theta_i=-\left(J\frac{\partial}{\partial\varphi^i}\right)_\flat$.
		But then $g^\perp=g_\Sigma(J\cdot,J\cdot)$ and because $J$ is orthogonal, we have $\|\tilde{g}^{\perp}-g^\perp\|_{\tilde{g}}=\|\tilde{g}_\Sigma-g_\Sigma\|_{\tilde{g}_\Sigma}$ which we proved decays like $\tilde\rho{}^{-1}$.
	\end{proof}

	%
	%
	%
	%
	%
	%
	\subsection{The asymptotically $\mathbb{R}^2\times\mathbb{S}^2$ metrics} \label{SubSecRS}
	In this section we examine those metric polygons $(\Sigma^2,g_\sigma)$ that have parallel rays on $\partial\Sigma^2$, but no lines.
	We show they are all asymptotically modeled by the class of polygons $(\widetilde{\Sigma}{}^2,g_\Sigma)$ given by the following comparison momentum functions
	\begin{equation}
		\begin{aligned}
			\tilde\varphi^1
			&=\frac12(m_1+m_d)
			-\frac{s_0}{2}(x-x_1)+\frac{s_d}{2}(x-x_d) \\
			&\quad+\left(\frac{s_0}{2}+\frac{m_d-m_1}{2(x_d-x_1)}\right)\sqrt{(x-x_1)^2+(y)^2} \\
			&\quad+\left(\frac{s_d}{2}-\frac{m_d-m_1}{2(x_d-x_1)}\right)\sqrt{(x-x_d)^2+(y)^2}
			+\frac{\alpha}{2}(y)^2 \\
			\tilde\varphi^2
			&=\frac12(n_1+n_d)
			+\frac{n_d-n_1}{x_d-x_1}\sqrt{(x-x_1)^2+(y)^2}
			-\frac{n_d-n_1}{x_d-x_1}\sqrt{(x-x_d)^2+(y)^2}
		\end{aligned} \label{EqnSRCompMomentums}
	\end{equation}
	where $x_1$, $x_d$ are the two Lipschitz points on the half-space boundary, and $p_1=(m_1,n_1)$ and $p_d=(m_d,n_d)$ are the corresponding polygon vertex points.
	For a depiction see Figure~\ref{FigRSModel}.
	From \cite{Web1}, if the reduction $\Sigma^2$ of $M^4$ has parallel rays, then after possible affine transformation of the $\varphi^1$-$\varphi^2$ plane its momentum functions are
	\begin{equation}
		\small
		\begin{aligned}
			\varphi^1
			&=m_1
			+\frac{s_0}{2}\left(-(x-x_1)+\sqrt{(x-x_1)^2+(y)^2}\right)
			+\frac{s_d}{2}\left((x-x_d)+\sqrt{(x-x_d)^2+(y)^2}\right) \\
			&
			+\sum_{i-1}^{d-1}\frac{m_{i+1}-m_i}{2(x_{i+1}-x_i)}\left(x_{i+1}-x_i+\sqrt{(x-x_i)^2+(y)^2}-\sqrt{(x-x_{i+1})^2+(y)^2}\right)
			+\frac{\alpha}{2}(y)^2 \\
			\varphi^1
			&=n_1
			+\sum_{i=1}^{d-1}\frac{n_{i+1}-n_i}{2(x_{i+1}-x_i)}\left(x_{i+1}-x_i+\sqrt{(x-x_i)^2+(y)^2}-\sqrt{(x-x_{i+1})^2+(y)^2}\right)
		\end{aligned}
	\end{equation}
	where $x_1,\dots,x_d$ are the Lipschitz points on the upper half-space boundary and $p_i=(m_i,n_i)$ are the corresponding polygon vertex points.
	The parameter $\alpha\ge0$ is a ``free'' parameter, meaning changing this parameter does not change the polygon $\Sigma^2$, but does change the metric $g_\Sigma$.
	\begin{figure}[h]
		\begin{tabular}{ll}
			\includegraphics[scale=0.17]{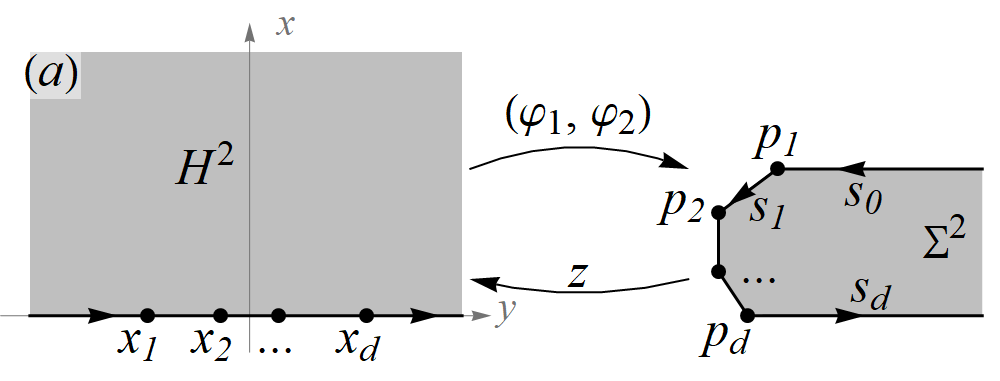}
			& \includegraphics[scale=0.17]{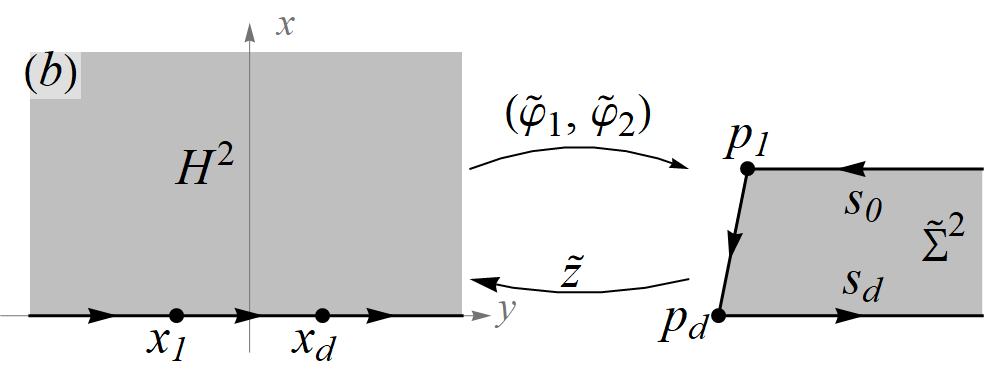}
		\end{tabular}
		\caption{
			(a) Depiction of a typical polygon $\Sigma^2$ with parallel rays, and (b) the corresponding half-strip model $\widetilde{\Sigma}{}^2$.
		} \label{FigRSModel}
	\end{figure}
	As in the ``generic'' case above, we must compute the transition matrix $\widetilde{A}$ for the model polygon and $A$ for the real polygon.
	Then using (\ref{EqnTheKeyQuotient}) we compare the metrics by estimating $1-\text{det}(A)/\text{det}(\widetilde{A})$.
	
	The comparison functions are more complicated than they were in the generic case, but we follow the same process for analyzing them.
	After taking partial derivatives, we approximate the resulting terms using the expansion (\ref{EqnSeriesDeriv}).
	For $\widetilde{A}$
	\begin{equation}
		\small
		\begin{aligned}
			&\widetilde{A}
			\;=\;\left(
			\begin{array}{cc}
				\frac{\partial\tilde\varphi^1}{\partial{}x} & \frac{\partial\tilde\varphi^1}{\partial{}y} \\
				\frac{\partial\tilde\varphi^2}{\partial{}x} & \frac{\partial\tilde\varphi^2}{\partial{}y}
			\end{array}
			\right) \\
			&=\left(
			\begin{array}{cc}
				0 & \alpha\,\sin\theta \\
				0 & 0
			\end{array}\right)\,r
			+\frac12\left(\begin{array}{cc}
				-(s_0+s_d)+(s_0+s_d)\cos\theta & (s_0+s_d)\sin\theta \\
				0 & 0
			\end{array}\right) \\
			&+\left(\begin{array}{cc}
				-\frac12(m_1-m_d+s_0x_1+s_dx_d)\sin^2\theta & \frac12(m_1-m_d+s_0x_1+s_dx_d)\sin\theta\cos\theta \\
				-\frac12(n_1-n_d)\sin^2\theta & \frac12(n_1-n_d)\sin\theta\cos\theta
			\end{array}\right)\frac{1}{r} \\
			&+O(r^{-2}).
		\end{aligned}
	\end{equation}
	and therefore
	\begin{equation}
		\begin{aligned}
			\text{det}(\widetilde{A})
			&\;=\;
			\frac12(n_1-n_d)\alpha\sin^3\theta
			\;+\;O(r^{-1})
		\end{aligned}
	\end{equation}
	The expressions for the $(\Sigma^2,g_\Sigma)$ metric can be arrived at similarly, but even the first few terms are so large they seeem inexpressible except at unreasonable length.
	We don't record them, but fortunately the exact expressions are unnecessary, and we need only record the fact that $\text{det}(A)$ has the same constant term that $\text{det}(\widetilde{A})$ has.
	We have
	\begin{equation}
		\begin{aligned}
			\text{det}(A)
			&\;=\;
			\frac12(n_1-n_d)\alpha\sin^3\theta
			+O(r^{-1})
		\end{aligned} \label{EqnFirstTermOfRSDet}
	\end{equation}
	Because $\text{det}(\widetilde{A})$ and $\text{det}(A)$ expansions have the same constant terms we have
	\begin{equation}
		1-\frac{\text{det}(A)}{\text{det}(\widetilde{A})}
		=O(r^{-1}) \label{EqnRSMetricComps}
	\end{equation}
	However unlike the ``general case'' above, we do not have any known relationship between the function $r=\sqrt{(x)^2+(y)^2}$ on the upper half-space and any distance function on $(\widetilde{M}{}^4,\tilde{g})$.
	But because $r$ is an exhaustion function (its sub-level are compact and their union equals $\widetilde{\Sigma}{}^2$) so we know at least that $r\rightarrow\infty$ as $\tilde\rho\rightarrow\infty$.
	Therefore $O(r^{-1})=o(1)$ on $\widetilde{M}{}^4$.
	This is enough to establish the following theorem.
	\begin{lemma} \label{LemmaAsymptSRCheck}
		Assume $M^4$ has reduction $\Sigma^2$ so that $\Sigma^2$ has parallel rays on its boundary.
		Then $(M^4,g)$ is asymptotically modeled on $(\widetilde{M},\tilde{g})$ to degree $0$, where $(\widetilde{M},\tilde{g})$ is given by the momentum functions (\ref{EqnSRCompMomentums}).
	\end{lemma}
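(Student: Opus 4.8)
The plan is to run the argument of Lemma \ref{LemmaGeneralCase} essentially verbatim, replacing the Taub--NUT comparison momenta (\ref{EqnTaubNUTModels}) by the half-strip momenta (\ref{EqnSRCompMomentums}). By (\ref{EqnTheKeyQuotient}), writing both $g_\Sigma$ and $\widetilde g_\Sigma$ in the common $(x,y)$ coordinates on the closed upper half-plane and taking $\pi=\mathrm{id}$ reduces the entire estimate to
\[
\left\|\pi^*g-\widetilde g\right\|_{\widetilde g}\;=\;\sqrt 2\left|1-\frac{\det A}{\det\widetilde A}\right|,
\]
where $A$ and $\widetilde A$ are the Jacobians of $(\varphi^1,\varphi^2)$ and $(\widetilde\varphi^1,\widetilde\varphi^2)$ with respect to $(x,y)$. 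So the whole lemma comes down to an asymptotic comparison of these two determinants as $r=\sqrt{x^2+y^2}\to\infty$.

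First I would differentiate (\ref{EqnSRCompMomentums}), expand each $\frac{x-x_i}{\sqrt{(x-x_i)^2+y^2}}$ and $\frac{y}{\sqrt{(x-x_i)^2+y^2}}$ in powers of $1/r$ via (\ref{EqnSeriesDeriv}) --- legitimate since the $x_i$ are fixed and $r$ is large --- and read off the displayed expansion of $\widetilde A$ together with $\det\widetilde A=\frac12(n_1-n_d)\alpha\sin^3\theta+O(r^{-1})$. The same expansion applied to the genuine momentum functions of \cite{Web1} produces $A$, and the key claim is that the $r^0$ term of $\det A$ coincides with that of $\det\widetilde A$; granting this, $1-\det A/\det\widetilde A=O(r^{-1})$ as in (\ref{EqnRSMetricComps}). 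The reason the claim should hold is structural: the model (\ref{EqnSRCompMomentums}) is built precisely so that $\widetilde\varphi^k$ and $\varphi^k$ agree to leading order at infinity --- the intermediate vertices $p_2,\dots,p_{d-1}$ are collapsed onto the chord $\overline{p_1 p_d}$ while the two rays, the breakpoints $x_1,x_d$, and the $\alpha$-term are retained --- so in the $1/r$-expansion the telescoping sums over the intermediate vertices contribute only at order $r^{-1}$ and below, and the surviving $r^0$-coefficients of $\partial\varphi^k/\partial x$ and $\partial\varphi^k/\partial y$ depend solely on the data $(s_0,s_d,x_1,x_d,p_1,p_d,\alpha)$ that the model reproduces by construction.

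It then remains to convert the $O(r^{-1})$ estimate in the coordinate $r$ into the $o(1)$ estimate in the model geodesic distance $\widetilde\rho$ required by Definition \ref{DefModeled} with $l=0$. For this I use only that $r$ is a proper exhaustion function of $\widetilde\Sigma^2$, so $r\to\infty$ whenever $\widetilde\rho\to\infty$; hence $O(r^{-1})=o(1)$ along the ends of $\widetilde M^4$ --- unlike the ``general'' case there is no quantitative comparison $r\sim\widetilde\rho$, which is exactly why the modeling degree drops to $0$. Finally, to pass from the $\varphi^1,\varphi^2$-block of the metric to the full metric I would invoke the computation at the end of the proof of Lemma \ref{LemmaGeneralCase}: the $\theta_1,\theta_2$-block is $g^\perp=g_\Sigma(J\cdot,J\cdot)$ with $J$ orthogonal, so $\|\widetilde g^\perp-g^\perp\|_{\widetilde g}=\|\widetilde g_\Sigma-g_\Sigma\|_{\widetilde g_\Sigma}$, and $\pi_*\widetilde{\mathcal{X}}{}^i=\mathcal{X}^i$ automatically since both manifolds are written in the same $\theta_1,\theta_2$ coordinates; this delivers the equivariant modeling to degree $0$.

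The main obstacle is the structural claim in the second paragraph --- that $\det A$ and $\det\widetilde A$ carry the same $r^0$-coefficient --- because, as the surrounding text observes, the exact expansions of $\varphi^1,\varphi^2$ are far too long to write out. The honest route is to extract only the $r^0$-part of each of the four entries $\partial\varphi^k/\partial x,\ \partial\varphi^k/\partial y$, check that each interior-vertex term $\frac{m_{i+1}-m_i}{2(x_{i+1}-x_i)}\big(\cdots\big)$ contributes nothing at order $r^0$ once its two square roots are expanded and paired, and match the remaining ray-and-$\alpha$ contributions against $\widetilde A$ entry by entry. A secondary subtlety is uniformity near the boundary rays: as $\theta\to 0,\pi$ the leading coefficient $\frac12(n_1-n_d)\alpha\sin^3\theta$ of $\det\widetilde A$ vanishes, but so does $\det A-\det\widetilde A$, to the same order in $\sin\theta$, so the quotient $\det A/\det\widetilde A$ still tends to $1$ at an $O(r^{-1})$ rate. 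One must also assume the polygon is genuinely two-dimensional, i.e. $n_1\ne n_d$, so the comparison metric is nondegenerate; in the degenerate sub-case $\alpha=0$ both determinants are $O(r^{-1})$ and the same structural matching is carried out one order further down.
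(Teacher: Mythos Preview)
Your proposal is correct and follows essentially the same approach as the paper: the paper's proof is a one-liner citing (\ref{EqnFirstTermOfRSDet}) and (\ref{EqnTheKeyQuotient}), with the passage from $O(r^{-1})$ to $o(1)$ via the exhaustion-function argument placed in the text immediately preceding the lemma. You have reproduced this route faithfully and in fact gone further, supplying the structural reason why the $r^0$-terms of $\det A$ and $\det\widetilde A$ coincide, flagging the $\sin\theta\to0$ and $\alpha=0$ subtleties, and importing the $g^\perp=g_\Sigma(J\cdot,J\cdot)$ step from Lemma~\ref{LemmaGeneralCase} to upgrade from $\Sigma^2$ to $M^4$---all points the paper leaves implicit in this subsection.
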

	\begin{proof}
		This a direct consequence of (\ref{EqnFirstTermOfRSDet}) which by (\ref{EqnTheKeyQuotient}) gives $\|\pi^*g-\tilde{g}\|_{\tilde{g}}=o(1)$.
	\end{proof}

	%
	%
	%
	%
	%
	%
	\subsection{Proof that the reductions are closed} \label{SubsecReductions}
	In this section we prove that if $M^4$ has the equivariant asymptotics of (1) or (2), then indeed its reduction $\Sigma^2$ is closed.
	First we establish a way to compare the Killing fields on $M^4$ its model $\widetilde{M}{}^4$.
	\begin{lemma} \label{EqnComparingPolarsToModel}
		Assume $M^4$ has Killing fields $\mathcal{X}^1$, $\mathcal{X}^2$ and $\widetilde{M}{}^4$ has Killing fields $\widetilde{\mathcal{X}}{}^1$, $\widetilde{\mathcal{X}}{}^2$.
		Assume $M^4$ is asymptotically equivariantly modeled on $\widetilde{M}{}^4$ to order 0.
		Then $\left|\|\mathcal{X}^i\|_g-\|\widetilde{\mathcal{X}}^i\|_{\tilde{g}}\right|=o(\|\widetilde{\mathcal{X}}^i\|_{\tilde{g}})$.
	\end{lemma}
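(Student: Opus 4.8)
The plan is to work on $\widetilde{M}{}^4\setminus\widetilde{K}$, where both the pulled-back metric $\pi^*g$ and the model metric $\tilde g$ are defined. Because the modeling is equivariant we have $\pi_*\widetilde{\mathcal{X}}{}^i=\mathcal{X}^i$, and hence $\|\mathcal{X}^i\|_g\circ\pi=\|\widetilde{\mathcal{X}}{}^i\|_{\pi^*g}$; so the assertion to be proved is precisely
\[
	\bigl|\,\|\widetilde{\mathcal{X}}{}^i\|_{\pi^*g}-\|\widetilde{\mathcal{X}}{}^i\|_{\tilde g}\,\bigr|\;=\;o\bigl(\|\widetilde{\mathcal{X}}{}^i\|_{\tilde g}\bigr)\qquad\text{as }\tilde\rho\to\infty.
\]

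First I would compare the squared norms. Set $h=\pi^*g-\tilde g$, a symmetric $2$-tensor with $\|h\|_{\tilde g}=o(1)$ by Definition \ref{DefModeled}. For any symmetric $(0,2)$-tensor $h$ and any vector $v$ one has the pointwise bound $|h(v,v)|\le\|h\|_{\tilde g}\,|v|_{\tilde g}^2$ (the operator norm of $h$ is dominated by $\|h\|_{\tilde g}$). Applying this with $v=\widetilde{\mathcal{X}}{}^i$ gives
\[
	\bigl|\,\|\widetilde{\mathcal{X}}{}^i\|_{\pi^*g}^2-\|\widetilde{\mathcal{X}}{}^i\|_{\tilde g}^2\,\bigr|\;\le\;\|h\|_{\tilde g}\,\|\widetilde{\mathcal{X}}{}^i\|_{\tilde g}^2\;=\;o\bigl(\|\widetilde{\mathcal{X}}{}^i\|_{\tilde g}^2\bigr).
\]

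Next I would pass from squares to norms. Writing $a=\|\widetilde{\mathcal{X}}{}^i\|_{\pi^*g}\ge0$ and $b=\|\widetilde{\mathcal{X}}{}^i\|_{\tilde g}\ge0$, the factorization $a^2-b^2=(a-b)(a+b)$ together with $a+b\ge b$ yields $|a-b|\le|a^2-b^2|/b=o(b)$, which is the claim. (If $b=0$ at some point then the squared-norm estimate already forces $a=0$ there; and since $\widetilde{\mathcal{X}}{}^i$ is a Killing field, its zero set is a closed submanifold that does not reach out to an end, so in fact $b>0$ for $\tilde\rho$ large, which is the regime where the estimate is used.) Finally, this pointwise inequality on $\widetilde{M}{}^4\setminus\widetilde{K}$ transports back to $M^4$ through the covering map $\pi$, where it is uniform on the corresponding end --- which is exactly what is needed for Lemma \ref{LemmaModelKillingBounded}.

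I do not anticipate a genuine obstacle: the whole argument is the operator-norm reading of the $o(1)$-closeness in Definition \ref{DefModeled}, combined with equivariance. The only point deserving care is bookkeeping --- that the decay of $\|h\|_{\tilde g}$ is phrased in terms of $\tilde\rho$ on the model and must be pulled back along $\pi$ --- and that is routine.
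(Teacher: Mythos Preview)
Your proof is correct and follows essentially the same route as the paper: use equivariance to identify $\mathcal{X}^i$ with $\widetilde{\mathcal{X}}{}^i$ via $\pi$, bound the difference of squared norms by $\|h\|_{\tilde g}\|\widetilde{\mathcal{X}}{}^i\|_{\tilde g}^2$ with $h=\pi^*g-\tilde g$, then factor $a^2-b^2=(a-b)(a+b)$. One small caveat: your parenthetical claim that the zero set of a Killing field ``does not reach out to an end'' is false in this setting---the polar submanifolds of the paper are precisely unbounded zero sets of Killing fields---but this does not affect the argument, since in the applications the relevant $\widetilde{\mathcal{X}}{}^i$ is the nonvanishing field along the polar submanifold.
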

	\begin{proof}
		Let $(\varphi^1,\theta_1,\varphi^2,\theta_2)$ and $(\tilde\varphi^1,\tilde\theta_1,\tilde\varphi^2,\tilde\theta_2)$ be the action-angle coordinates on $M^4$ and $\widetilde{M}{}^4$, respectively.
		Then $\mathcal{X}^i=\frac{\partial}{\partial\theta_i}$, $\widetilde{\mathcal{X}}^i=\frac{\partial}{\partial\tilde\theta_i}$.
		Let $\pi$ be the equivariant diffeomorphism that realizes the asymptotic modeling; then $\pi_*\frac{\partial}{\partial\tilde\theta_i}=\frac{\partial}{\partial\theta_i}$.
		Thus
		\begin{equation}
			\small
			\begin{aligned}
				{\left\|\frac{\partial}{\partial\theta_i}\right\|_g}^2
				&=
				g\left(\frac{\partial}{\partial\theta_i},\frac{\partial}{\partial\theta_i}\right)
				=
				g\left(\pi_*\frac{\partial}{\partial\tilde\theta_i},\pi_*\frac{\partial}{\partial\tilde\theta_i}\right) \\
				&=
				(\pi^*g-\tilde{g})\left(\frac{\partial}{\partial\tilde\theta_i},\frac{\partial}{\partial\tilde\theta_i}\right)
				+\tilde{g}\left(\frac{\partial}{\partial\tilde\theta_i},\frac{\partial}{\partial\tilde\theta_i}\right)
			\end{aligned}
		\end{equation}
		so therefore
		\begin{equation}
			\begin{aligned}
				\left|{\left\|\frac{\partial}{\partial\tilde\theta_i}\right\|_{\tilde{g}}}^2
				-{\left\|\frac{\partial}{\partial\theta_i}\right\|_{g}}^2\right|
				\;\le\;\left|(\pi^*g-\tilde{g})\left(\frac{\partial}{\partial\tilde\theta_i},\frac{\partial}{\partial\tilde\theta_i}\right)\right|
				\;\le\;\left\|(\pi^*g-\tilde{g})\right\|_{\tilde{g}}
				{\left\|\frac{\partial}{\partial\tilde\theta_i}\right\|_{\tilde{g}}}^2
			\end{aligned}
		\end{equation}
		which simplifies to
		\begin{equation}
			\begin{aligned}
				&\left|\left\|\frac{\partial}{\partial\tilde\theta_i}\right\|_{\tilde{g}}
				-\left\|\frac{\partial}{\partial\theta_i}\right\|_{g}\right|
				\;\le\;
				\left\|(\pi^*g-\tilde{g})\right\|_{\tilde{g}}
				\frac{\left\|\frac{\partial}{\partial\tilde\theta_i}\right\|_{\tilde{g}}}
				{1+
					\frac{\left\|\frac{\partial}{\partial\theta_i}\right\|_{g}}
					{\left\|\frac{\partial}{\partial\tilde\theta_i}\right\|_{\tilde{g}}}
				}
				\;\le\;
				\left\|(\pi^*g-\tilde{g})\right\|_{\tilde{g}}
				\left\|\frac{\partial}{\partial\tilde\theta_i}\right\|_{\tilde{g}}.
			\end{aligned}
		\end{equation}
		Because $\left\|(\pi^*g-\tilde{g})\right\|_{\tilde{g}}=o(1)$ by assumption, the result follows.
	\end{proof}
	Next we prove asymptotic structure results for two types of model.
	\begin{lemma} \label{LemmaModeledOnTaubNUT}
		Let $\widetilde{M}{}^4$ be one of the generalized Taub-NUT metrics given by (\ref{EqnTaubNUTModels}).
		Then $\widetilde{M}{}^4$ has two unbounded polar submanifolds, and the Killing field on each is asymptotically bounded away from zero.
	\end{lemma}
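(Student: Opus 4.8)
The plan is to work directly with the explicit momentum functions (\ref{EqnTaubNUTModels}) and identify the polar submanifolds as the preimages of the two boundary rays of the reduction $\widetilde{\Sigma}{}^2$. Recall the reduction of a generalized Taub-NUT is a wedge (a quarter-plane after affine adjustment), whose two bounding rays are exactly $\{\theta = 0\}$ and $\{\theta = \pi\}$, i.e. the images of the positive and negative $x$-axis in the upper half-space. By Proposition \ref{PropBdStruct3}, the preimage of each maximally extended boundary ray is a holomorphically embedded, geodesically complete codimension-2 submanifold carrying a Killing field; these are our two polar submanifolds $L^2_+$ and $L^2_-$. First I would verify that each such ray is genuinely unbounded (so that the corresponding $L^2$ is noncompact with an unbounded radial distance function $r$), which is immediate from the fact that $x \to \pm\infty$ along the $x$-axis and that $\varphi^1$ or $\varphi^2$ grows linearly there.

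Next I would compute the Killing field on each $L^2_\pm$ and show it stays bounded away from zero. The cleanest route is to use $|\mathcal{X}^i|^2 = G^{ij}$-type data: along $\{\theta = 0\}$, from (\ref{EqnCompsOfCompTaubNUT}) the derivatives $\partial\tilde\varphi^k/\partial y$ pick up the terms $(\alpha r + s_d/2)\sin\theta$ and $(\beta r + s_0/2)\sin\theta$, so one must take the limit $\theta \to 0$ carefully, using the metric in the form (\ref{EqnMetricInXY}) and the expression for $G^{ij}$ in terms of the transition matrix $A$. The Killing field that does \emph{not} vanish on a given polar submanifold is the one whose momentum is (up to affine combination) constant along that ray; I would show its squared norm, expressed via $G^{ij} = \delta_{st}A^{is}A^{jt}|dx|^2$, limits to a strictly positive constant as one approaches the ray and stays bounded below as $r \to \infty$ along it. Concretely, on $\{\theta=0\}$ the combination of $\tilde\varphi^1, \tilde\varphi^2$ that kills the growing $(\alpha r)$, $(\beta r)$ contributions has a norm controlled by the $s_0, s_d$ constants, which are nonzero by construction of the wedge.

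The main obstacle I anticipate is the degeneracy of the $(x,y)$ coordinate system precisely on the rays $\theta = 0, \pi$, where $y = 0$ and the formula (\ref{EqnMetricInXY}) has a $1/y$ that must be cancelled by $\det(A) \to 0$. So the real work is a careful one-sided expansion of $\det(A)$ and of the relevant $G^{ij}$ entries as $\theta \to 0^+$ (and $\theta \to \pi^-$), verifying the cancellation produces a finite, positive limit and that this limit does not decay as $r\to\infty$. This is analogous to — but simpler than — the expansions already carried out in (\ref{EqnCompsOfCompTaubNUT}) and the subsequent display, since here we have the exact momenta rather than a perturbed version. Once the positive lower bound on $|\widetilde{\mathcal{X}}|$ along each $L^2_\pm$ is established, the two polar submanifolds are visibly unbounded (since the rays are) and the proof is complete; the bound is in fact uniform, which is slightly stronger than the ``$|\mathcal{X}| \ge C_1|r|^{-1}$'' decay rate needed to later invoke Theorem \ref{ThmClosed} via Lemma \ref{EqnComparingPolarsToModel}.
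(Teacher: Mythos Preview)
Your proposal is correct and follows essentially the same route as the paper: identify the two polar submanifolds as the loci $y=0$, $x>0$ and $y=0$, $x<0$, then compute $\|\widetilde{\mathcal{X}}^i\|^2=\|d\tilde\varphi^i\|^2=((\widetilde{A}^{i1})^2+(\widetilde{A}^{i2})^2)\cdot y/\det(\widetilde{A})$ by taking the limit $y\to0$ so that the $1/y$ cancels against $\det(\widetilde{A})$. The paper carries this out exactly, obtaining $\|\widetilde{\mathcal{X}}^2\|^2=\frac{-2x}{s_d-2\alpha x}\to 1/\alpha$ on one side and $\|\widetilde{\mathcal{X}}^1\|^2=\frac{2x}{s_0+2\beta x}\to 1/\beta$ on the other; note that when $\alpha=0$ or $\beta=0$ the norm actually grows without bound rather than tending to a finite constant, but in all cases it is bounded away from zero, which is all that is needed.
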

	\begin{proof}
		For the Taub-NUT momentum functions (\ref{EqnTaubNUTModels}), the polygon $\widetilde{\Sigma}{}^2$ is the first quadrant in the $\tilde\varphi^1$-$\tilde\varphi^2$ plane.
		The map $(\tilde\varphi^1,\tilde\varphi^2):\overline{H}{}^2\rightarrow\widetilde{\Sigma}{}^2$ has just a single Lipschitz point $x_0=0$ on the upper half space boundary, and $M^4$ has two polar submanifolds: one where $y=0$ and $x<0$ (which projects to the positive $\tilde\varphi^2$-axis in the $\tilde\varphi^1$-$\tilde\varphi^2$ plane) and one where $y=0$ and $x>0$ (which projects to the positive $\tilde\varphi^1$-axis).
		
		To measure the Killing fields we use the metric (\ref{EqnMetricInXY})
		We have $d\tilde\varphi^i=J\widetilde{\mathcal{X}}^i$, so
		\begin{equation}
			\small
			\|\widetilde{\mathcal{X}}^i\|^2
			=\|d\tilde\varphi^i\|^2
			=
			\left(\left(\frac{\partial\tilde\varphi^i}{\partial{}x}\right)^2
			+\left(\frac{\partial\tilde\varphi^i}{\partial{}y}\right)^2\right)\|dx\|_{\tilde{g}}^2
			=\left((\widetilde{A}^{i1})^2+(\widetilde{A}^{i2})^2\right)\|dx\|_{\tilde{g}}^2.
		\end{equation}
		From (\ref{EqnMetricInXY}) we have $\|dx\|^2_{\tilde{g}}=y/\text{det}(\widetilde{A})$.
		From (\ref{EqnCompsOfCompTaubNUT}), when $y=0$ we have
		\begin{equation}
			\begin{aligned}
				&\widetilde{A}=
				\left(\begin{array}{cc}
					0 & 0 \\
					-s_0 & 0
				\end{array}\right)\text{when $x<0$ and }
				\left(\begin{array}{cc}
					s_d & 0 \\
					0 & 0
				\end{array}\right)\text{when $x>0$, and} \\
				&\lim_{y\rightarrow0}\frac{1}{y}\text{det}(\widetilde{A})=s_0\left(-\frac{s_d}{2x}+\alpha\right)\text{when $x<0$ and }
				s_d\left(\frac{s_0}{2x}+\beta\right)\text{when $x>0$.}
			\end{aligned}
		\end{equation}
		The positive $\varphi^2$-axis is the projection of the polar submanifold $x<0$, $y=0$ and the Killing field on this submanifold has norm
		\begin{equation}
			\|\widetilde{\mathcal{X}}^2\|_{\tilde{g}}^2
			=\frac{-2x}{s_d-2\alpha{}x}
			\;\stackrel{x\rightarrow\infty}{\longrightarrow}\;\frac{1}{\alpha}.
		\end{equation}
		The positive $\varphi^1$-axis is the projection of the polar submanifold $x>0$, $y=0$ and the Killing field on this submanifold has norm
		\begin{equation}
			\|\widetilde{\mathcal{X}}^2\|_{\tilde{g}}^2
			=\frac{2x}{s_0+2\beta{}x}
			\;\stackrel{x\rightarrow\infty}{\longrightarrow}\;\frac{1}{\beta}.
		\end{equation}
		If $\alpha=0$ or $\beta=0$ we have unbounded growth.
		In any case we have bounds away from zero.
	\end{proof}
	\begin{lemma} \label{LemmaModeledOnRS}
		Let $\widetilde{M}{}^4$ be one of the $\mathbb{R}^2\times\mathbb{S}^2$ model metrics given by the momentum functions of (\ref{EqnSRCompMomentums}).
		Then $\widetilde{M}{}^4$ has two unbounded polar submanifolds, and the Killing field on each grows unboundedly.
	\end{lemma}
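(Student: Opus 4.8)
The plan is to run the same argument as in Lemma~\ref{LemmaModeledOnTaubNUT}, with the decisive twist that the free parameter $\alpha$ will now drop out of the leading behaviour of $\det(\widetilde A)$ along the boundary rays. First I would record the geometry of the model polygon coming from (\ref{EqnSRCompMomentums}): the map $(\tilde\varphi^1,\tilde\varphi^2)\colon\overline{H}{}^2\to\widetilde{\Sigma}{}^2$ has exactly the two Lipschitz points $x_1<x_d$ on $\{y=0\}$, which cut $\{y=0\}$ into three pieces; the image of $x_1<x<x_d$ is the connecting edge and the images of $x<x_1$ and $x>x_d$ are the two parallel rays. The two unbounded polar submanifolds are $L^2_-=\Phi^{-1}\{y=0,\ x<x_1\}$ and $L^2_+=\Phi^{-1}\{y=0,\ x>x_d\}$, each a complex submanifold capped at one end by a torus fixed point (the pre-image of the polygon vertex $p_1$, resp.\ $p_d$) and running out to a manifold end at the other.

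Next I would compute $\widetilde A=\big(\partial\{\tilde\varphi^1,\tilde\varphi^2\}/\partial\{x,y\}\big)$ on $\{y=0\}$. Abbreviating $r_1=\sqrt{(x-x_1)^2+y^2}$, $r_d=\sqrt{(x-x_d)^2+y^2}$, on $x>x_d$ one has $(x-x_j)/r_j=1$ and $y/r_j=0$ at $y=0$, and the coefficients $\tfrac{s_0}{2}+\tfrac{m_d-m_1}{2(x_d-x_1)}$ and $\tfrac{s_d}{2}-\tfrac{m_d-m_1}{2(x_d-x_1)}$ combine with the explicit linear terms to give $\widetilde A^{11}=s_d$, while $\widetilde A^{12}$, $\widetilde A^{21}$ and $\widetilde A^{22}$ all vanish at $y=0$; on $x<x_1$ the same computation gives $\widetilde A^{11}=-s_0$ with the other three entries vanishing. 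In particular $d\tilde\varphi^2=0$ along both rays, so $\widetilde{\mathcal{X}}^2$ vanishes on both $L^2_\pm$ (these are the two ``poles'' of the $\mathbb{S}^2$-factor), and on each polar submanifold the nonvanishing Killing field is $\widetilde{\mathcal{X}}^1$, which is what we track.

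The key step is the first-order expansion of $\det(\widetilde A)$ in $y$ along a ray. From $r_j=|x-x_j|+O(y^2)$ one gets $\widetilde A^{22}=\tfrac{n_d-n_1}{x_d-x_1}\big(\tfrac{y}{r_1}-\tfrac{y}{r_d}\big)=\tfrac{n_d-n_1}{x_d-x_1}\big(\tfrac1{x-x_1}-\tfrac1{x-x_d}\big)y+O(y^3)$, whose coefficient of $y$ collapses (on $x>x_d$) to $(n_1-n_d)/\big((x-x_1)(x-x_d)\big)$; at the same time $\widetilde A^{11}=s_d+O(y^2)$, $\widetilde A^{12}=O(y)$ and $\widetilde A^{21}=O(y^2)$, so $\widetilde A^{12}\widetilde A^{21}=O(y^3)$ and
\begin{equation}
\lim_{y\to0}\tfrac1y\det(\widetilde A)\;=\;\frac{s_d\,(n_1-n_d)}{(x-x_1)(x-x_d)}\ \text{ on }L^2_+,\qquad \frac{s_0\,(n_1-n_d)}{(x_1-x)(x_d-x)}\ \text{ on }L^2_-.
\end{equation}
The point to underline is that $\alpha$ enters only $\widetilde A^{12}$, which is killed in this limit --- in contrast to Lemma~\ref{LemmaModeledOnTaubNUT}, where $\alpha$ survived in $\tfrac1y\det(\widetilde A)$ and was exactly what kept the Killing field bounded.

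Finally, $\|dx\|^2_{\tilde g}=y/\det(\widetilde A)$ by (\ref{EqnMetricInXY}), hence along $L^2_+$ it tends to $(x-x_1)(x-x_d)/\big(s_d(n_1-n_d)\big)$ (the coefficient is positive because $\det(\widetilde A)$ is a submersion Jacobian, so $\tfrac1y\det(\widetilde A)>0$), and therefore
\begin{equation}
\|\widetilde{\mathcal{X}}^1\|^2_{\tilde g}=\big((\widetilde A^{11})^2+(\widetilde A^{12})^2\big)\|dx\|^2_{\tilde g}\;\longrightarrow\;\frac{s_d\,(x-x_1)(x-x_d)}{n_1-n_d},
\end{equation}
a positive quadratic in $x$, so $\|\widetilde{\mathcal{X}}^1\|\to\infty$ as $x\to\infty$; the computation on $L^2_-$ is identical with $s_0$ in place of $s_d$ and diverges as $x\to-\infty$. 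Each $L^2_\pm$ is unbounded since its image is an infinite boundary ray of $\widetilde{\Sigma}{}^2$. The only real obstacle is the bookkeeping in the $y$-expansions of the various square-root terms; once one sees the $\alpha$-contribution drop out of $\tfrac1y\det(\widetilde A)$, the unbounded growth of $\widetilde{\mathcal{X}}^1$ is immediate.
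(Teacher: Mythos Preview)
Your proposal is correct and follows essentially the same route as the paper: identify the two unbounded polar submanifolds over $\{y=0,\ x<x_1\}$ and $\{y=0,\ x>x_d\}$, compute $\widetilde A$ and $\lim_{y\to0}\tfrac1y\det(\widetilde A)$ there, and combine with $\|dx\|_{\tilde g}^2=y/\det(\widetilde A)$ to see that $\|\widetilde{\mathcal X}^1\|^2$ is a positive quadratic in $x$ along each ray. Your explicit observation that the $\alpha$-term sits in $\widetilde A^{12}$ and is killed by $\widetilde A^{21}=O(y^2)$ is a nice addition the paper leaves implicit; the only discrepancy is an overall factor of $2$ in the final constant (the paper gets $\tfrac{2s_d}{n_1-n_d}$ where you get $\tfrac{s_d}{n_1-n_d}$), which traces back to whether one reads the coefficient in (\ref{EqnSRCompMomentums}) as $\tfrac{n_d-n_1}{x_d-x_1}$ or $\tfrac{n_d-n_1}{2(x_d-x_1)}$ and is irrelevant to the unbounded-growth conclusion.
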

	\begin{proof}
		This proof is almost the same as the proof of Lemma \ref{LemmaModeledOnTaubNUT}.
		Again polar submanifolds occur exactly where $y=0$.
		The comparison polygon, as depicted in Figure~\ref{FigRSModel}(b), has its terminal rays parallel to the $\varphi^1$-axis; therefore the Killing field along either unbounded polar submanifold is $\frac{\partial}{\partial\tilde\theta_1}$.
		Again using (\ref{EqnMetricInXY}) we have
		\begin{equation}
			\left\|\frac{\partial}{\partial\tilde\theta_1}\right\|_{\tilde{g}}^2
			\;=\;
			\left\|d\tilde\varphi^1\right\|_{\tilde{g}}^2
			\;=\;
			\frac{y}{\text{det}(\widetilde{A})}
			\left(\left(\frac{\partial\tilde\varphi^1}{\partial{}x}\right)^2+\left(\frac{\partial\tilde\varphi^1}{\partial{}y}\right)^2\right).
		\end{equation}
		The model momenta (\ref{EqnSRCompMomentums}) allow exact computation of this quantity.
		When $y=0$, we compute to the left of the first Lipschitz point (when $x<x_1$) that
		\begin{equation}
			\left(\frac{\partial\tilde\varphi^1}{\partial{}x}\right)^2+\left(\frac{\partial\tilde\varphi^1}{\partial{}y}\right)^2
			=s_0{}^2 \quad\text{and}\quad
			\lim_{y\rightarrow0}\frac{y}{\text{det}(\widetilde{A})}=2\frac{(x-x_1)(x-x_d)}{(n_1-n_d)s_0},
		\end{equation}
		and to the right of the second Lipschitz point (when $x>x_d$) that
		\begin{equation}
			\left(\frac{\partial\tilde\varphi^1}{\partial{}x}\right)^2+\left(\frac{\partial\tilde\varphi^1}{\partial{}y}\right)^2
			=s_d{}^2 \quad\text{and}\quad
			\lim_{y\rightarrow0}\frac{y}{\text{det}(\widetilde{A})}=2\frac{(x-x_1)(x-x_d)}{(n_1-n_d)s_0}.
		\end{equation}
		Therefore in both cases we obtain growth of $\frac{\partial}{\partial\theta_1}$, approximately linearly with $x$.
		Specifically, on the polygon boundary where $y=0$, 
		\begin{equation}
			\left\|\frac{\partial}{\partial\tilde\theta_1}\right\|_{\tilde{g}}
			=\begin{cases}
				\sqrt{\frac{2s_0}{n_1-n_d}}\sqrt{(x-x_1)(x-x_d)}, & x<x_1 \\
				\sqrt{\frac{2s_d}{n_1-n_d}}\sqrt{(x-x_1)(x-x_d)}, & x>x_d.
			\end{cases}
		\end{equation}
		Currenty there is no known relationship between $x$ and the distance function on the manifold, but certainly $x$ is an exhaustion function so $x\rightarrow\infty$ implies $\tilde\rho\rightarrow\infty$.
		Therefore $\|\widetilde{\mathcal{X}}{}^1\|_{\tilde{g}}\rightarrow\infty$ grows unboundedly along either of the unbounded polar submanifolds.
	\end{proof}
	
	\underline{\it Proof of Lemma \ref{LemmaModelKillingBounded}}.
	Assume $M^4$ is equivariantly asymptotic to $\widetilde{M}{}^4$ where $\widetilde{M}{}^4$ is a generalized Taub-NUT.
	Because the diffeomorphism $\pi:\widetilde{M}{}^4\setminus\widetilde{K}\rightarrow{}M^4$ is equivariant the polar submanifolds map to polar submanifolds (as polar submanifolds are zero-sets of Killing fields).
	By Lemma \ref{LemmaModeledOnTaubNUT}, the model manifold $\widetilde{M}^4$ has polar submanifolds with Killing fields bounded from below; therefore by Lemma \ref{EqnComparingPolarsToModel} so does $M^4$.
	Therefore by Theorem \ref{ThmClosed} the reduction $\Sigma^2$ of $M^4$ is closed.
	
	Using Lemma \ref{LemmaModeledOnRS}, if $M^4$ is equivariantly asymptotically $\mathbb{R}^2\times\mathbb{S}^2$, we reach the same conclusion.
	\qed

	%
	%
	%
	%
	%
	%
	\subsection{Proof of Theorem \ref{ThmSFClassification}}
	
	Assume $(M^4,g,J,\mathcal{X}^1,\mathcal{X}^2)$ is a scalar-flat K\"ahler manifold with commuting holomorphic Kililng fields $\{\mathcal{X}^1,\mathcal{X}^2\}$.
	For the ``$\Rightarrow$'' direction, assume its reduction $\Sigma^2$ is closed.
	If $\Sigma^2$ is compact, then passing to $\Sigma^2$ the fact that $s=0$ on $M^4$ means $\triangle_\Sigma\mathcal{V}^{1/2}=0$ on the precompact domain $Int(\Sigma^2)$.
	By the maximum principle $\mathcal{V}^{1/2}$ has no interior maximum, but also $\mathcal{V}^{1/2}=0$ on $\partial\Sigma^2$, so $\mathcal{V}$ is zero.
	This means $\mathcal{X}^1$, $\mathcal{X}^2$ are everywhere linearly dependent, violating the assumption that the manifold is toric.
	
	If $\Sigma^2$ is $\mathbb{R}^2$, Theorem \ref{ThmComplete} states $M^4$ is flat.
	If $\partial\Sigma^2$ has a single line then $\Sigma^2$ is a half-plane, so by Theorem 1.5 of \cite{Web1} then $M^4$ is an exceptional half-plane instanton (or is flat).
	If $\partial\Sigma^2$ has non-parallel rays then Lemma \ref{LemmaGeneralCase} states $M^4$ is equivariantly asymptotically ``General'' and if $\partial\Sigma^2$ has parallel rays then Lemma \ref{LemmaAsymptSRCheck} states $M^4$ is equivariantly asymptotically $\mathbb{R}^2\times\mathbb{S}^2$.
	These four possibilities, along with the case that $\Sigma^2$ is the closed strip, exhausts the possiblities for a closed reduction $\Sigma^2$.
	
	For the ``$\Leftarrow$'' part of the proof, assume $M^4$ has the asymptotic models given by any of (1)-(5) of Theorem \ref{ThmSFClassification}.
	If $M^4$ is ``equivariantly asymptotically general'' meaning it has the asymptotics of (1) or ``equivariantly asymptotically $\mathbb{R}^2\times\mathbb{S}^2$'' meaning it has the asymptotics of (2), the Lemma \ref{LemmaModelKillingBounded} states its reduction $\Sigma^2$ is closed.
	The other three cases are \textit{a fortiori} true: if $M^4$ is the exceptional half-plane instanton or if the reduction of $M^4$ is the closed half-strip then clearly $\Sigma^2$ is closed.
	Lastly if $M^4$ is flat $\mathbb{C}^2$ then it has three possible reductions depending on which commuting Killing fields are chosen.
	If two translational fields are chose then $\Sigma^2$ is $\mathbb{R}^2$, if one translational and one rotational field is chosen then $\Sigma^2$ is a half-plane in $\mathbb{R}^2$, and if two rotational fields are chosen then $\Sigma^2$ is a quarter-plane in $\mathbb{R}^2$.
	This concludes the proof of Theorem \ref{ThmSFClassification}.

	%
	%
	%
	%
	%
	%
	%
	%
	%
	%
	\section{Examples} \label{SecExamples}
	
	We give two examples.
	The first is a complete 4-manifold with precompact but non-polygonal reduction.
	The second is the case of the scalar-flat hyperbolic space crossed with a sphere; this is the main example of the ``equivariantly asymptotically $\mathbb{R}^2\times\mathbb{S}^2$'' model geometry.
	\begin{figure}[h]
		\begin{tabular}{ll}
			\includegraphics[scale=0.35]{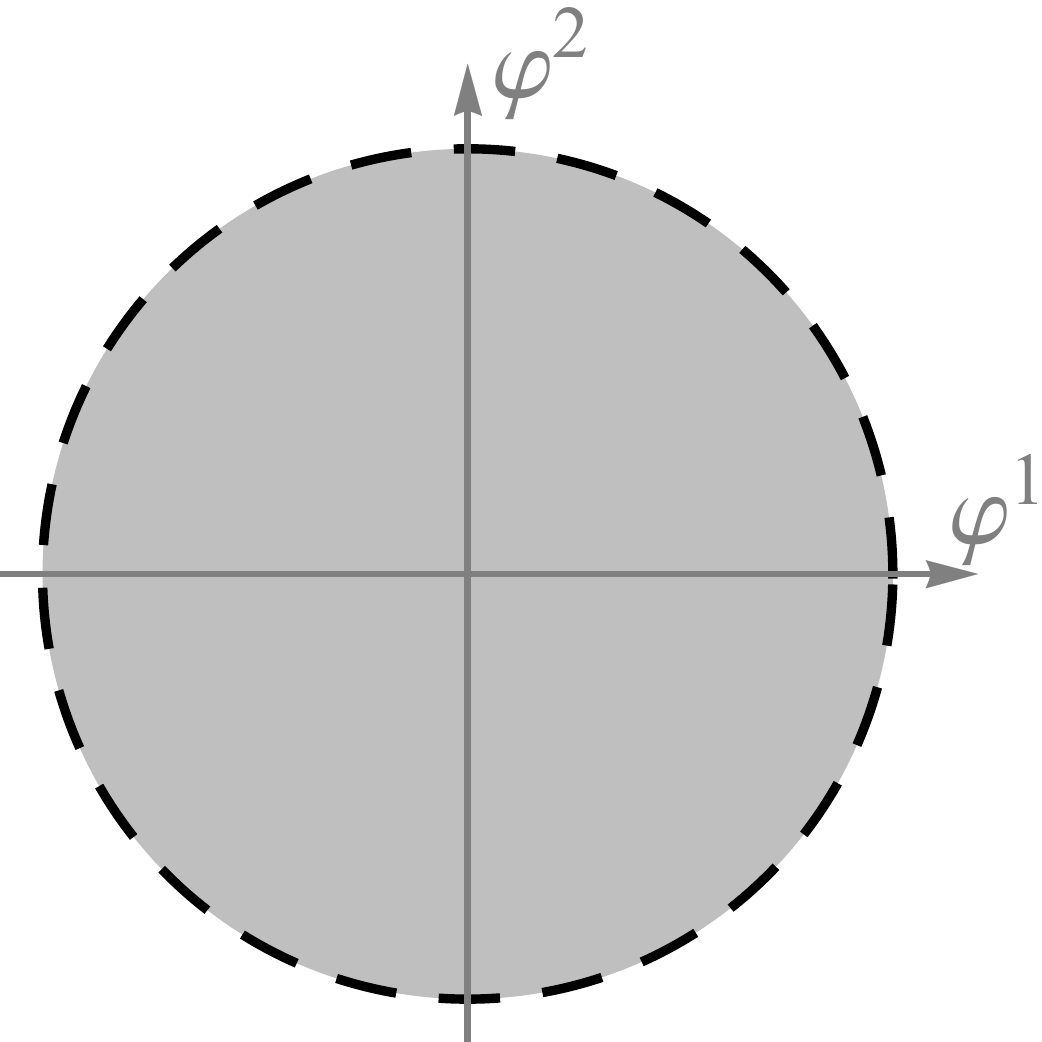}
			& \hspace{0.4in}\includegraphics[scale=0.52]{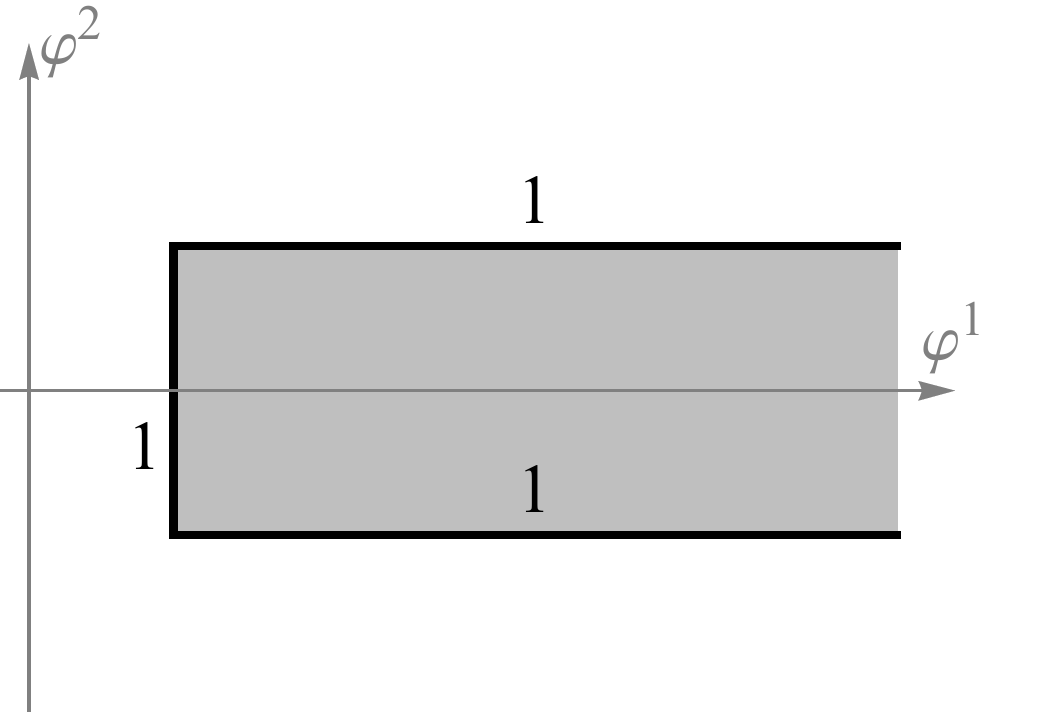}
		\end{tabular}
		\caption{
			(a) A non-polygon reduction, and (b) the reduction of the cross product of hyperbolic space with the unit sphere.
		}
		\label{FigFinal}
	\end{figure}

	%
	%
	%
	%
	%
	%
	\subsection{A non-polygon example} \label{SubsubsecNonpolygon}
	
	Consider the function
	\begin{eqnarray}
		G(\varphi^1,\varphi^2)\;=\;-\frac12\log\left(1-\left(\varphi^1\right)^2-\left(\varphi^2\right)^2 \right) \label{EqnDefOfUu}
	\end{eqnarray}
	defined on the open disk $\Sigma^2=\{\left(\varphi^1\right)^2+\left(\varphi^2\right)^2<1\}$.
	Interpreting this as a symplectic potential, we obtain metric $g_{\Sigma,ij}=G_{ij}d\varphi^i\otimes{}d\varphi^j$ where $G_{ij}\triangleq\frac{\partial^2{}G}{\partial\varphi^i\partial\varphi^j}$.
	Explicitly,
	\begin{eqnarray}
		G_{ij}
		\;=\;
		\left(\begin{array}{cc}
			\frac{1+\left(\varphi^1\right)^2-\left(\varphi^2\right)^2}{\left(1-\left(\varphi^1\right)^2-\left(\varphi^2\right)^2\right)^2} & \frac{2\varphi^1\varphi^2}{\left(1-\left(\varphi^1\right)^2-\left(\varphi^2\right)^2\right)^2} \\
			\frac{2\varphi^1\varphi^2}{\left(1-\left(\varphi^1\right)^2-\left(\varphi^2\right)^2\right)^2} & \frac{1-\left(\varphi^1\right)^2+\left(\varphi^2\right)^2}{\left(1-\left(\varphi^1\right)^2-\left(\varphi^2\right)^2\right)^2}
		\end{array}\right).
	\end{eqnarray}
	Then $(\Sigma^2,g_{\Sigma})$ is a surface of revolution.
	See Figure \ref{FigFinal}(a).
	Considering a path of the form $\gamma(t)=(a\,t,\;b\,t)$ where $a^2+b^2=1$, then $|\dot\gamma|=\sqrt{2}\frac{\sqrt{1+t^2}}{1-t^2}$ and we see that $\int_0^1|\dot\gamma|dt=\infty$, so this surface is complete.
	By \ref{EqnsGJOmegaM}, then $(\Sigma^2,g_\Sigma)$ is the Arnold-Liouville reduction of a complete toric K\"ahler 4-manifold on $\Sigma^2\times\mathbb{T}^2$ whose metric is $g=G_{ij}d\varphi^i\otimes{}d\varphi^j+G^{ij}d\theta_i\otimes{}d\theta_j$.
	By the usual formula for the curvature tensor in holomorphic coordinates,
	\begin{equation}
		\Omega^i_j=\sqrt{-1}\partial\left(h^{i\bar{s}}\bar\partial{}h_{j\bar{s}}\right).
	\end{equation}
	Using the complex coordinates of (\ref{EqnCxCoords}) we have $h^{i\bar{\jmath}}=\frac12G^{ij}$ and $\frac{\partial}{\partial{}z_i}=\frac12\nabla\varphi^i$, so
	\begin{equation}
		\Omega^i_j=\frac14\sqrt{-1}\nabla\varphi^k(G^{is}\nabla\varphi^l(G_{js}))\;dz_k\wedge{}d\bar{z}_l. \label{EqnCurvature}
	\end{equation}
	We can compute scalar curvature $s$ either using the Abreu equation (Equation (10) of \cite{Ab1}), or else using (\ref{EqnCurvature}), to find that
	\begin{equation}
		R\;=\;4\frac{2-4(\boldsymbol{\varphi})^2-3(\boldsymbol{\varphi})^4-(\boldsymbol{\varphi})^6}{(1+(\boldsymbol{\varphi})^2)^3}
	\end{equation}
	where we abbreviated $\boldsymbol{\varphi}=\sqrt{(\varphi^1)^2+(\varphi^2)^2}$.
	We see $R$ is not signed; for example $R=+8$ at $\boldsymbol{\varphi}=0$ and decreases to $-3$ asymptotically as $\boldsymbol{\varphi}\rightarrow1$.
	A tedious but straightforward check shows the norm of curvature is bounded.
	We find
	\begin{equation}
		\small
		*(\Omega^i_j\wedge*\Omega^j_i)
		=32\frac{3-10({\boldsymbol{\varphi}})^2+29({\boldsymbol{\varphi}})^4+24({\boldsymbol{\varphi}})^6+19({\boldsymbol{\varphi}})^8+6({\boldsymbol{\varphi}})^{10}+({\boldsymbol{\varphi}})^{12}}{(1+({\boldsymbol{\varphi}})^2)^6}.
	\end{equation}

	%
	%
	%
	%
	%
	%
	\subsection{The example of hyperbolic space crossed with the sphere}
	A typical scalar-flat metric on $\mathbb{R}^2\times\mathbb{S}^2$ is
	\begin{equation}
		g
		=(dr^1)^2+(\sinh(r^1)d\theta_1)^2
		+(dr^2)^2+(\sin(r^2)d\theta_2)^2
	\end{equation}
	where $r^1\in[0,\infty)$, $r^2\in[0,\pi]$, and $\theta_1,\theta_2\in[0,2\pi)$.
	The complex structure is $J\frac{\partial}{\partial\theta_1}=-\sinh(r_1)\nabla{}r^1$ and $J\frac{\partial}{\partial\theta_2}=-\sin(r^2)\nabla{}r^2$; therefore the momentum functions are $\varphi^1=\cosh(r^1)$, $\varphi^2=-\cos(r^2)$.
	In action-angle coordinates we find
	\begin{equation}
		g
		=\frac{1}{(\varphi^1)^2-1}(d\varphi^1)^2+\big((\varphi^1)^2-1\big)(d\theta^1)^2
		+\frac{1}{1-(\varphi^2)^2}(d\varphi^2)^2+\big(1-(\varphi^2)^2\big)(d\theta^2)^2
	\end{equation}
	with ranges $\varphi^1\in[1,\infty)$ and $\varphi^2\in[-1,1]$.
	Therefore $\Sigma^2$ is a closed half-strip with vertex points at $p_1=(1,1)$ and $p_d=(1,-1)$; see Figure~\ref{FigFinal}(b).
	The $\Sigma^2$ metric is
	\begin{equation}
		g_\Sigma\;=\;\frac{1}{(\varphi^1)^2-1}(d\varphi^1)^2+\frac{1}{1-(\varphi^2)^2}(d\varphi^2)^2
	\end{equation}
	which is a flat metric on the closed half-strip $\varphi^1\in[1,\infty)$ and $\varphi^2\in[-1,1]$.
	To compute the volumetric normal coordinates, it is immediate that
	\begin{equation}
		y\;=\;\sqrt{\mathcal{V}}\;=\;\sqrt{((\varphi^1)^2-1)(1-(\varphi^2)^2)},
	\end{equation}
	and then because $J_\Sigma{}d\varphi^1=\sqrt{\frac{1-(\varphi^2)^2}{(\varphi^1)^2-1}}d\varphi^2$ we find
	\begin{equation}
		x\;=\;\varphi^1\varphi^2.
	\end{equation}
	Solving for $\varphi^1$ and $\varphi^2$ in terms of $x$ and $y$, we find
	\begin{equation}
		\begin{aligned}
			&\varphi^1=\frac12\left(\sqrt{(x-1)^2+(y)^2}+\sqrt{(x+1)^2+(y)^2}\right), \\
			&\varphi^2=\frac12\left(-\sqrt{(x-1)^2+(y)^2}+\sqrt{(x+1)^2+(y)^2}\right).
		\end{aligned}
	\end{equation}
	These are precisely the comparison momentum functions of $(\tilde\varphi^1,\tilde\varphi^2)$ of (\ref{EqnSRCompMomentums}) for $s_0=1$, $s_d=1$, $x_1=-1$, $x_d=1$, $p_1=(m_1,n_1)=(1,1)$, $p_d=(m_d,n_d)=(1,-1)$.

\end{document}